\documentclass[a4paper]{amsart}
\usepackage[T1]{fontenc}
\usepackage{lmodern}
\usepackage[utf8]{inputenc}
\usepackage{amssymb}
\usepackage[all]{xy}
\usepackage{enumitem}
\usepackage{mathrsfs,mathtools}
\usepackage{nicefrac}

\usepackage[pdftitle={Bivariant K-theory via correspondences},
  pdfauthor={Heath Emerson and Ralf Meyer},
  pdfsubject={Mathematics; MSC 19K35, 46L80}
]{hyperref}
\newcommand*{\MRref}[2]{ \href{http://www.ams.org/mathscinet-getitem?mr=#1}{MR \textbf{#1}}}
\newcommand*{\arxiv}[1]{\href{http://www.arxiv.org/abs/#1}{arXiv: #1}}

\usepackage[lite]{amsrefs}
\usepackage{microtype}
\numberwithin{equation}{section}
\theoremstyle{plain}
\newtheorem{theorem}[equation]{Theorem}
\newtheorem{lemma}[equation]{Lemma}
\newtheorem{proposition}[equation]{Proposition}
\newtheorem{corollary}[equation]{Corollary}
\theoremstyle{definition}
\newtheorem{definition}[equation]{Definition}

\theoremstyle{remark}
\newtheorem{remark}[equation]{Remark}
\newtheorem{example}[equation]{Example}

\DeclareMathOperator{\KK}{KK}

\DeclareMathOperator{\K}{K}
\DeclareMathOperator{\KO}{KO}
\DeclareMathOperator{\RK}{RK}
\DeclareMathOperator{\vbK}{VK}

\newcommand*{\GKK}{\widehat{\textsc{kk}}}
\newcommand*{\Coh}{\textup F}%cohomology theory
\newcommand*{\Bic}{\hat{\textsc f}}%bivariant version of \Coh

\newcommand*{\flip}{\textup{flip}}
\newcommand*{\PD}{\textup{PD}}
\newcommand*{\SPD}{\textup{PD}_2}

\newcommand*{\bd}{\partial}

\newcommand*{\Id}{\textup{Id}}

\newcommand*{\C}{\mathbb C}
\newcommand*{\Z}{\mathbb Z}

\newcommand*{\Sphere}{\mathbb S}
\newcommand*{\Disk}{\mathbb D}
\newcommand*{\N}{\mathbb N}
\newcommand*{\R}{\mathbb R}

\newcommand*{\Hils}{\mathcal H}
\newcommand*{\CONT}{\textup C}
\newcommand*{\EG}{\mathcal E}%universal proper action

% normally non-singular map category

\newcommand*{\Tvert}{\textup T}%vertical tangent bundle
\newcommand*{\Grd}{\mathcal G}% groupoid
  % element of \(\Grd\)
% second groupoid
  % element of \(\GrdH\)
\newcommand*{\Base}{Z} % base space, object space of the groupoid \(\Grd\)
\newcommand*{\base}{z} % element of \(\Base\)
\newcommand*{\Tot}{X}  % total space, a space over \(\Base\)
\newcommand*{\tot}{x}  % element of \(\Tot\)
\newcommand*{\Other}{Y}% another space over \(\Base\)
% element of \(\Other\)
\newcommand*{\Source}{X}% source of a map over \(\Base\)
% element of \(\Source\)
\newcommand*{\Target}{Y}% target of a map over \(\Base\)
\newcommand*{\target}{y}% element of \(\Target\)
\newcommand*{\Third}{U}% third target of a map over \(\Base\)
\newcommand*{\third}{u}% element of \(\Third\)
\newcommand*{\Dual}{P}% dual of some space
\newcommand*{\dual}{p}% element of \(\dual\)
\newcommand*{\Midd}{M}% middle space in a correspondence
\newcommand*{\midd}{m}% element of \(\Midd\)
\newcommand*{\Bord}{W}% bordism
\newcommand*{\bord}{w}% element of \(\Bord\)
\newcommand*{\VB}{V}% vector bundle
\newcommand*{\vb}{v}% element of \(\VB\)
\newcommand*{\Triv}{E}% trivial vector bundle
\newcommand*{\triv}{e}% element of \(\Triv\)
\newcommand*{\Normal}{\textup N}% (stable) normal bundle
% element of \(\Normal\)

\newcommand*{\Kclass}{\xi}% class in K-theory
\newcommand*{\Thom}{\tau}% Thom isomorphism

\newcommand*{\NM}{\Phi}% normally non-singular map
\newcommand*{\Cor}{\Psi}% correspondence
\newcommand*{\anchor}{\varrho}% anchor map
\newcommand*{\mapl}{b}% left map in correspondence (backwards)
\newcommand*{\mapr}{f}% right map in correspondence (forwards)
\newcommand*{\pr}{\textup{pr}}

\newcommand*{\nb}{\nobreakdash}
\newcommand*{\Cst}{\textup C^*}

\newcommand*{\total}[1]{\lvert#1\rvert}%total space of a vector bundle
\newcommand*{\proj}[1]{\pi_{#1}}%bundle projection of a vector bundle
\newcommand*{\bproj}[1]{\bar\pi_{#1}}%extension of \proj{...}
\newcommand*{\zers}[1]{\zeta_{#1}}%zero sectiona of a vector bundle
\newcommand*{\norm}[1]{\lVert#1\rVert}

\newcommand*{\defeq}{\mathrel{\vcentcolon=}}
\newcommand*{\pt}{\star}
\newcommand*{\blank}{\textup{\textvisiblespace}}

\newcommand*{\inpro}{\mathbin{\#}}%product of correspondences

\newcommand*{\mono}{\rightarrowtail}%monomorphism, especially zers
\newcommand*{\epi}{\twoheadrightarrow}%epimorphism, especially proj
\newcommand*{\opem}{\hookrightarrow}%open embedding
%open embedding
%isomorphism
\newcommand*{\bordant}{\sim_\textup b}
\newcommand*{\sbordant}{\sim_{\textup{sb}}}
\newcommand*{\sequiv}{\sim_\textup s}

\hyphenation{group-oid group-oids mon-oid-al Gro-then-dieck}

\begin{document}
\title{Bivariant K-theory via correspondences}

\author{Heath Emerson}
\email{hemerson@math.uvic.ca}
\address{Department of Mathematics and Statistics\\
  University of Victoria\\
  PO BOX 3045 STN CSC\\
  Victoria, B.C.\\
  Canada V8W 3P4}

\author{Ralf Meyer}
\email{rameyer@uni-math.gwdg.de}
\address{Mathematisches Institut and Courant Research Centre ``Higher Order Structures''\\
  Georg-August Universit\"at G\"ottingen\\
  Bunsenstra{\ss}e 3--5\\
  37073 G\"ottingen\\
  Germany}

\begin{abstract}
  We use correspondences to define a purely topological equivariant bivariant \(\K\)\nb-theory for spaces with a proper groupoid action.  Our notion of correspondence differs slightly from that of Connes and Skandalis.  Our construction uses no special features of equivariant \(\K\)\nb-theory.  To highlight this, we construct bivariant extensions for arbitrary equivariant multiplicative cohomology theories.

  We formulate necessary and sufficient conditions for certain duality isomorphisms in the topological bivariant \(\K\)\nb-theory and verify these conditions in some cases, including smooth manifolds with a smooth cocompact action of a Lie group.  One of these duality isomorphisms reduces bivariant \(\K\)\nb-theory to \(\K\)\nb-theory with support conditions.  Since similar duality isomorphisms exist in Kasparov theory, the topological and analytic bivariant \(\K\)\nb-theories agree if there is such a duality isomorphism.
\end{abstract}
\subjclass[2000]{19K35, 46L80}
\thanks{Heath Emerson was supported by a National Science and Engineering Council of Canada (NSERC) Discovery grant.  Ralf Meyer was supported by the German Research Foundation (Deutsche Forschungsgemeinschaft (DFG)) through the Institutional Strategy of the University of G\"ottingen.}
\maketitle

\section{Introduction}
\label{sec:intro}

Kasparov's bivariant \(\K\)\nb-theory is the main tool in non-commutative topology.  Its deep analytic properties are responsible for many applications of \(\Cst\)\nb-algebra methods in topology such as the Novikov conjecture or the Gromov--Lawson--Rosenberg conjecture.  But some of its applications -- such as the computation of equivariant Euler characteristics and Lefschetz invariants in~\cites{Emerson-Meyer:Euler, Emerson-Meyer:Equi_Lefschetz} -- should not require any difficult analysis and should therefore profit from a purely topological substitute for Kasparov's theory.  Our goal here is to construct such a theory in terms of correspondences.

Already in~1980, Paul Baum and Ronald Douglas~\cite{Baum-Douglas:K-homology} proposed a topological description of the \(\K\)\nb-homology \(\KK_*(\CONT_0(\Source),\C)\) of a space~\(\Source\), which was soon extended to the bivariant case by Alain Connes and Georges Skandalis~\cite{Connes-Skandalis:Indice_feuilletages}.  Equivariant generalisations with somewhat limited scope were considered in \cites{Baum-Block:Bicycles, Raven:Thesis}.  One might hope that a topological bivariant theory defined along these lines could be shown to agree with Kasparov's analytic theory \(\KK^\Grd_*\bigl(\CONT_0(\Source), \CONT_0(\Target)\bigr)\) under some finiteness assumptions.  But even in the case of non-equivariant \(\K\)\nb-homology, a complete proof appeared only recently in~\cite{Baum-Higson-Schick:Equivalence}.

The following problem creates new difficulties in the equivariant case.  Part of the data of a geometric cycle in the sense of Paul Baum is a vector bundle.  But equivariant vector bundles are sometimes in too short supply to generate equivariant \(\K\)\nb-theory.  Let \(\vbK^0_\Grd(\Tot)\) be the Grothendieck group of the additive category of \(\Grd\)\nb-equivariant complex vector bundles over a proper \(\Grd\)\nb-space~\(\Tot\).  The functor \((\Tot,A)\mapsto \vbK^0_\Grd(\Tot,A)\) for finite \(\Grd\)\nb-CW-pairs need not satisfy excision.

To get a reasonable cohomology theory, we need more general cycles as in Graeme Segal's definition of representable \(\K\)\nb-theory in~\cite{Segal:Fredholm_complexes}.  The \(\Grd\)\nb-equivariant representable \(\K\)\nb-theory \(\RK^*_\Grd(\Tot)\) for locally compact groupoids~\(\Grd\) and locally compact, proper \(\Grd\)\nb-spaces~\(\Tot\) is studied in~\cite{Emerson-Meyer:Equivariant_K}.  There are several equivalent definitions, using a variant of Kasparov theory, \(\K\)\nb-theory for projective limits of \(\Cst\)\nb-algebras, or equivariant families of Fredholm operators.  Furthermore, \cite{Emerson-Meyer:Equivariant_K} studies non-representable equivariant \(\K\)\nb-theory \(\K^*_\Grd(\Tot)\) and \(\K\)\nb-theory with \(\Target\)\nb-compact support \(\RK^*_{\Grd,\Target}(\Tot)\), where \(\Target\) and~\(\Tot\) are \(\Grd\)\nb-spaces with a \(\Grd\)\nb-map \(\Tot\to\Target\).  All three theories may be described by Fredholm-operator-valued maps~-- a reasonably satisfactory homotopy theoretic picture.

However, even after replacing \(\vbK^0_\Grd(\Tot)\) by \(\RK^0_\Grd(\Tot)\), equivariant vector bundles still play an important role in various arguments with correspondences.  First, the proof that the topological and analytic bivariant \(\K\)\nb-theories agree for smooth manifolds requires certain equivariant vector bundles, which only exist under additional technical assumptions.  Secondly, we need some equivariant \(\Grd\)\nb-vector bundles to compose correspondences.  If correspondences are defined as in~\cite{Connes-Skandalis:Longitudinal}, then composing them requires a transversality condition.  In the equivariant case, this can no longer be achieved by a perturbation argument.  A basic example is the pair of maps \(\{0\} \to \R\leftarrow \{0\}\) from a point to the plane sending the point to the origin.  This is equivariant with respect to the action of~\(\Z/2\) by a rotation around the origin. These two maps cannot be perturbed to be transverse to each other since the origin is the only fixed-point.  Baum and Block~\cite{Baum-Block:Bicycles} suggest how to compose such correspondences despite this.  This trick uses vector bundle modification and thus an ample supply of vector bundles.

Therefore, to get an elegant theory, we have modified two details in the definition of a correspondence.  Our changes to the definition have the nice side effect that our theory no longer uses any special features of \(\K\)\nb-theory and extends almost literally to any equivariant multiplicative cohomology theory.  We work in this general setting for conceptual reasons and in order to prepare for the construction of an equivariant bivariant Chern character.  In the non-equivariant case, Martin Jakob has described the homology and bivariant cohomology theories associated to a cohomology theory along similar lines in \cites{Jakob:Bordism_homology, Jakob:Bivariant}.

A \(\Grd\)\nb-equivariant \emph{correspondence} from~\(\Source\) to~\(\Target\) is a \(\Grd\)\nb-space~\(\Midd\) with \(\Grd\)\nb-equivariant continuous maps
\[
\Source \xleftarrow{\mapl} \Midd \xrightarrow{\mapr} \Target
\]
and with some equivariant \(\K\)\nb-theory datum~\(\Kclass\) on~\(\Midd\).  In~\cite{Connes-Skandalis:Longitudinal}, \(\mapl\) is proper, \(\mapr\) is smooth and \(\K\)\nb-oriented, and~\(\Kclass\) is a vector bundle over~\(\Midd\).

We do \emph{not} require~\(\mapl\) to be proper.  Instead, we let \(\Kclass\in \RK^*_{\Grd,\Source}(\Midd)\) be a \(\Grd\)\nb-equivariant \(\K\)\nb-theory class with \emph{\(\Source\)\nb-compact support}; thus \(\mapl\) and~\(\Kclass\) combine to an element of \(\KK^\Grd_* \bigl(\CONT_0(\Source), \CONT_0(\Midd)\bigr)\).  Roughly speaking, instead of requiring the fibres of~\(\mapl\) to be compact, we require that~\(\mapl\) restricts to a proper map on the support of~\(\Kclass\).

Furthermore, we let~\(\mapr\) be a \(\K\)\nb-oriented \emph{normally non-singular} map in the sense of~\cite{Emerson-Meyer:Normal_maps}.  Roughly speaking, these are maps together with a factorisation
\[
\xymatrix{
  \VB \ar[r]^-{\hat{f}}& \Triv \ar[d]^-{\proj{\Triv}}\\
  \Source \ar[u]^-{\zers{\VB}} \ar[r]^-{f}& \Target,
}
\]
where~\(\VB\) is a \(\Grd\)\nb-vector bundle over~\(\Source\) with zero section \(\zers{\VB}\colon \Source\to \VB\), \(\Triv\) is a \(\Grd\)\nb-vector bundle over~\(\Target\) with bundle projection \(\proj{\Triv}\colon \Triv \to \Target\), and \(\hat{f}\colon \VB\to \Triv\) is an open embedding.  The normally non-singular map is \(\K\)\nb-oriented if \(\VB\) and~\(\Triv\) are equivariantly \(\K\)\nb-oriented.  For technical reasons, we require~\(\Triv\) to be trivial, that is, pulled back from the object space of our groupoid, and~\(\VB\) to be subtrivial, that is, a direct summand in a trivial \(\Grd\)\nb-vector bundle.  The triviality of~\(\Triv\) is needed to define the composition of normally non-singular maps.  The subtriviality of~\(\VB\) is needed to bring correspondences into a standard form; otherwise the bundle projection~\(\proj{\VB}\) would not be the trace of a normally non-singular map.

Thom isomorphisms and functoriality suffice to construct purely topological wrong-way maps for normally non-singular maps.  In contrast, the construction of wrong-way elements for arbitrary smooth maps is analytical (see~\cite{Emerson-Meyer:Normal_maps}).  If we used smooth maps instead of normally non-singular maps, then correspondences would not in general describe equivariant \(\K\)\nb-theory correctly, and since we prove the equivalence of the topological and analytic bivariant \(\K\)\nb-theory by a reduction to \(\K\)\nb-theory, this would have a disastrous effect on the general framework. On the other hand, for many proper groupoids, any smooth map has an essentially unique normal factorisation, so that there is no difference between smooth normally non-singular maps and smooth maps.  But this requires some technical conditions on the groupoid, which then have to appear in all important theorems.  We use normally non-singular maps here to avoid such technical conditions.

Before we discuss correspondences further, we must discuss the kind of equivariance we allow.  Although we are mainly interested in the case of group actions on spaces, we develop our whole theory in the setting where~\(\Grd\) is a \emph{numerably proper} groupoid in the sense of~\cite{Emerson-Meyer:Normal_maps}.  Numerably proper groupoids combine Abels' numerably proper group actions (\cite{Abels:Universal}) with Haar systems.  If the groupoid~\(\Grd\) is not proper --~say an infinite discrete group~-- then we replace it by the groupoid \(\Grd\ltimes\EG\Grd\) for a universal proper \(\Grd\)\nb-space and pull back all \(\Grd\)\nb-spaces to \(\Grd\ltimes\EG\Grd\)-spaces.  This does not change \(\KK^\Grd_*\bigl(\CONT_0(\Source),\CONT_0(\Target)\bigr)\) if~\(\Grd\) acts properly (or amenably) on~\(\Source\) (see \cites{Meyer-Nest:BC, Emerson-Meyer:Dualities}).

Analysis plays no role in the construction of our bivariant cohomology theories.  Hence we do not need our spaces to be locally compact~-- paracompact Hausdorff is good enough.  For actions of numerably proper groupoids on paracompact Hausdorff spaces, pull-backs of equivariant vector bundles along equivariantly homotopic maps are isomorphic, equivariant vector bundles carry invariant inner products, and extensions of equivariant vector bundles split.

We removed the properness condition on~\(\mapl\) in order to simplify the construction of the intersection product.  In the usual approach, the intersection product of correspondences only works under a transversality assumption, which can be achieved by perturbing the maps involved.  This perturbation no longer works equivariantly.  As mentioned above, Paul Baum and Jonathan Block suggest in \cites{Baum-Block:Bicycles, Baum-Block:Excess} to use vector bundle modification to overcome this, but while this works well in certain situations we found it hard to formalise.  With our non-proper correspondences, we can bring any correspondence into a standard form for which transversality is automatic.

This involves our equivalence relation of Thom modification, which replaces the vector bundle modification of Paul Baum.  The difference is that we use the total space of the vector bundle instead of a sphere bundle.  This is possible because we allow non-proper correspondences.

Write the oriented normally non-singular map~\(\mapr\) from~\(\Midd\) to~\(\Target\) in a correspondence as a triple \((\VB,\Triv,\hat{\mapr})\) for oriented \(\Grd\)\nb-vector bundles \(\VB\) and~\(\Triv\) over~\(\Midd\) and~\(\Target\) and an open embedding \(\hat{\mapr}\colon \total{\VB}\opem \total{\Triv}\).  Thom modification along the vector bundle~\(\VB\) replaces the given correspondence by one that involves the total space of~\(\VB\) in the middle, and where~\(\mapr\) becomes the normally non-singular map \(\total{\VB}\subseteq \total{\Triv}\epi\Target\); such normally non-singular maps are also called \emph{special normally non-singular submersions}.  Thus any correspondence is equivalent to a special one, that is, one with a special normally non-singular submersion~\(\mapr\).  Notice that the map \(\total{\VB}\epi\Midd\to\Source\) is almost never proper.

Since \emph{any} map is transverse to a special normally non-singular submersion, it is easy to describe the intersection product for special correspondences and to check that it has the expected properties, including functoriality of the canonical map to Kasparov theory.  Since correspondences that appear in practice are usually not special, we define a notion of transversality for general correspondences and describe intersection products more directly in the transverse case.

The wrong-way functoriality for normally non-singular maps in~\cite{Emerson-Meyer:Normal_maps} provides a natural transformation
\begin{equation}
  \label{eq:intro_GKK_to_KK}
  \GKK_\Grd^*(\Source,\Target) \to
  \KK^\Grd_*\bigl( \CONT_0(\Source), \CONT_0(\Target) \bigr).
\end{equation}
The main result of this article is that~\eqref{eq:intro_GKK_to_KK} is an isomorphism if~\(\Grd\) is a proper locally compact groupoid with Haar system and~\(\Source\) admits a normally non-singular map to \(\Base\times[0,1)\) or to~\(\Base\), where~\(\Base\) is the object space of~\(\Grd\).  In the non-equivariant case, this assumption means that \(\Source\times\R^n\) carries a structure of smooth manifold with boundary for some \(n\in\N\).  In the equivariant case, the existence of such a normally non-singular map implies that there is a \(\Grd\)\nb-vector bundle~\(\Triv\) over~\(\Base\) such that \(\Source\times_\Base\total{\Triv}\) is a bundle of smooth manifolds with boundary over~\(\Base\), with a fibrewise smooth action of~\(\Grd\).  Conversely, such a smooth structure yields a normally non-singular map \(\Source\to\Base\) under a technical assumption about equivariant vector bundles.  The additional technical assumption holds, for instance, if \(\Grd= G\ltimes \Third\) for a discrete group~\(G\) and a finite-dimensional proper \(\Grd\)\nb-space~\(\Third\) with uniformly bounded isotropy groups, or if~\(\Grd\) is a compact group and~\(\Source\) is compact.

The proof that~\eqref{eq:intro_GKK_to_KK} is an isomorphism for such spaces~\(\Source\) is based on Poincar\'e duality.  If~\(\Source\) admits a normally non-singular map to \(\Base\times[0,1)\), then we describe a \(\Grd\)\nb-space~\(\Dual\) that is dual to~\(\Source\) in the sense that there are natural isomorphisms
\begin{align}\label{eq:intro_dual1}
  \GKK_{\Grd\ltimes\Source}^*(\Source\times_\Base\Third,
  \Source\times_\Base\Target) &\cong
  \GKK_\Grd^*(\Dual\times_\Base\Third,\Target),\\
  \label{eq:intro_dual2}
  \GKK_\Grd^*(\Source\times_\Base\Third,\Target) &\cong
  \GKK_{\Grd\ltimes\Source}^*(\Source\times_\Base \Third,
  \Dual\times_\Base\Target)
\end{align}
for any \(\Grd\)\nb-spaces \(\Target\) and~\(\Third\), and similarly for the analytic theory \(\KK\) instead of~\(\GKK\).  The corresponding duality in Kasparov theory is studied in~\cite{Emerson-Meyer:Dualities}, where sufficient and necessary conditions for it are established.  These criteria carry over almost literally to the topological version of Kasparov theory.

In particular, \eqref{eq:intro_dual2} for \(\Third=\Base\) identifies
\begin{align*}
  \GKK_\Grd^*(\Source,\Target) &\cong
  \GKK_{\Grd\ltimes\Source}^*(\Source,
  \Dual\times_\Base\Target),\\
  \KK^\Grd_*\bigl( \CONT_0(\Source), \CONT_0(\Target) \bigr)
  &\cong
  \KK^{\Grd\ltimes\Source}_*\bigl( \CONT_0(\Source),
  \CONT_0(\Dual\times_\Base\Target) \bigr)
  \cong \RK^*_{\Grd,\Source}(\Dual\times_\Base\Target),
\end{align*}
where \(\RK^*_{\Grd,\Source}(\Dual\times_\Base\Target)\) is the \(\Grd\)\nb-equivariant \(\K\)\nb-theory of \(\Dual\times_\Base\Target\) with \(\Source\)\nb-compact support.  The map in~\eqref{eq:intro_GKK_to_KK} is an isomorphism if \(\Source=\Base\) because our bivariant \(\K\)\nb-theory extends ordinary \(\K\)\nb-theory.  Hence~\eqref{eq:intro_GKK_to_KK} is an isomorphism whenever~\(\Source\) has a duality isomorphism~\eqref{eq:intro_dual2}.  Using the results in~\cite{Emerson-Meyer:Normal_maps}, this implies that~\eqref{eq:intro_GKK_to_KK} is invertible provided~\(\Source\) is a smooth \(\Grd\)\nb-manifold with boundary and some technical assumptions about \(\Grd\)\nb-vector bundles are satisfied.

We do not expect our topological bivariant \(\K\)\nb-theory to have good properties in the same generality in which it may be defined.  Equivariant Kasparov theory has good excision properties (long exact sequences) for proper actions in complete generality; in contrast, we would be surprised if the same were true for our topological theory.  We have not studied its excision properties, but it seems likely that excision requires some technical assumptions.  Correspondingly, we do not expect our theory to agree with Kasparov theory in all cases.

%This article relies on results of \cites{Emerson-Meyer:Equivariant_K, Emerson-Meyer:Dualities, Emerson-Meyer:Normal_maps}.  The article~\cite{Emerson-Meyer:Equivariant_K} defines and studies equivariant representable \(\K\)\nb-theory for proper locally compact groupoids, including variants with support conditions which play a crucial role in our definition of correspondence.  The article~\cite{Emerson-Meyer:Dualities} studies duality isomorphisms in equivariant Kasparov theory, and exactly the same general formalism applies to our topological category~\(\GKK^\Grd\); this is crucial for our proof that the topological and analytic bivariant \(\K\)\nb-theories coincide.  The article~\cite{Emerson-Meyer:Normal_maps} introduces normally non-singular maps and establishes the groupoid-equivariant analogue of the Mostow Embedding Theorem, proving that smooth maps between \(\Grd\)\nb-manifolds are normally non-singular in a unique way under some hypotheses.

We would like to extend our thanks to Paul Baum for a number of interesting conversations on the subject of topological \(\KK\)-theory.

\section{Correspondences}
\label{sec:groupoids_actions}

We first define correspondences.  Then we define equivalence of correspondences, using the elementary relations of equivalence of normally non-singular map, bordism, and Thom modification.  Equivalence classes of \(\Coh\)\nb-oriented correspondences will be shown to form a group, which we denote by \(\Bic^*(\Source,\Target)\) or \(\Bic_\Grd^*(\Source,\Target)\) in the \(\Grd\)\nb-equivariant case.

The intersection product defining the composition of correspondences is only well-defined under a transversality condition (see~\cite{Connes-Skandalis:Longitudinal}).  We restrict attention to special correspondences at some point to rule out this problem.  The Thom modification allows us to replace any correspondence by one whose normally non-singular map is a special normally non-singular submersion, and this implies the transversality condition for all intersection products.  Thus we turn~\(\Bic^*\) into a \(\Z\)\nb-graded category.  We also define an exterior product that turns it into a \(\Z\)\nb-graded symmetric monoidal category.

Before we start with this, we briefly recall some prerequisites for our theory from \cites{Emerson-Meyer:Equivariant_K, Emerson-Meyer:Normal_maps}.

Throughout this article, all topological spaces, including all topological groupoids, are assumed to be paracompact and Hausdorff.  We shall use the notion of a numerably proper groupoid introduced in~\cite{Emerson-Meyer:Normal_maps}.  Equivariant vector bundles for numerably proper actions behave like non-equivariant vector bundles: equivariant sections or equivariant vector bundle morphisms extend from closed invariant subspaces, vector bundle extensions split equivariantly, and pull-backs along equivariantly homotopic maps are isomorphic.  An action of a locally compact groupoid with Haar system on a locally compact space is numerably proper if and only if the action is proper and the orbit space is paracompact.

As in~\cite{Emerson-Meyer:Normal_maps}, we write~\(\total{\VB}\) for the total space, \(\proj{\VB}\) for the bundle projection, and~\(\zers{\VB}\) for the zero section of a vector bundle~\(\VB\).  We reserve the arrows \(\mono\), \(\epi\), and~\(\opem\) for zero sections, vector bundle projections, and open embeddings, respectively.  A \(\Grd\)\nb-vector bundle is called \emph{trivial} if it is pulled back from the object space of~\(\Grd\), and \emph{subtrivial} if it is a direct summand in a trivial \(\Grd\)\nb-vector bundle.

Since our constructions use no special properties of \(\K\)\nb-theory, we mostly work with a general equivariant multiplicative cohomology theory~\(\Coh_\Grd\) as in~\cite{Emerson-Meyer:Normal_maps}.  Given~\(\Coh_\Grd\) and a \(\Grd\)\nb-space~\(\Tot\), the \(\Coh\)\nb-cohomology \(\Coh^*_{\Grd,\Tot}(\Other)\) of~\(\Other\) with \(\Tot\)\nb-compact support and the notion of an \(\Coh\)\nb-oriented \(\Grd\)\nb-vector bundle are defined in~\cite{Emerson-Meyer:Normal_maps}.  An \(\Coh\)\nb-oriented vector bundle~\(\VB\) over~\(\Other\) has a Thom isomorphism \(\Coh_\Tot^*(\Other)\cong \Coh_\Tot^*(\total{\VB})\).  Furthermore, \(\Coh^*_{\Grd,\Tot}\) is functorial for open embeddings.

The cohomology theory we are most interested in is (representable) equivariant \(\K\)\nb-theory.  The resulting \(\K\)\nb-theory with \(\Tot\)\nb-compact support agrees with the corresponding theory defined in~\cite{Emerson-Meyer:Equivariant_K}.  Actually, equivariant (representable) \(\K\)\nb-theory is defined in~\cite{Emerson-Meyer:Equivariant_K} only for locally compact proper \(\Grd\)\nb-spaces, so that we should impose such restrictions whenever we want to specialise to \(\K\)\nb-theory.

Normally non-singular maps are introduced in~\cite{Emerson-Meyer:Normal_maps}.  Since some details of this definition will become crucial here, we recall it:

\begin{definition}
  \label{def:normal_map}
  Let~\(\Grd\) be a numerably proper groupoid with object space~\(\Base\) and let \(\Source\) and~\(\Target\) be \(\Grd\)\nb-spaces.  Let~\(\Coh\) be a \(\Grd\)\nb-equivariant multiplicative cohomology theory.  An \(\Coh\)\nb-oriented \emph{normally non-singular \(\Grd\)\nb-map} from~\(\Source\) to~\(\Target\) consists of
  \begin{itemize}
  \item \(\VB\), an \(\Coh\)\nb-oriented subtrivial \(\Grd\)\nb-vector bundle over~\(\Source\);

  \item \(\Triv\), an \(\Coh\)\nb-oriented \(\Grd\)\nb-vector bundle over~\(\Base\);

  \item \(\hat{f}\colon \total{\VB} \opem \total{\Triv^\Target}\), an open embedding (that is, \(\hat{f}\) is a \(\Grd\)\nb-equivariant map from~\(\total{\VB}\) onto an open subset of \(\total{\Triv^\Target} = \total{\Triv}\times_\Base\Target\) that is a homeomorphism with respect to the subspace topology from~\(\total{\Triv^\Target}\)).
  \end{itemize}
  In addition, we assume that the dimensions of the fibres of the \(\Grd\)\nb-vector bundles \(\VB\) and~\(\Triv\) are bounded above by some \(n\in\N\).

  The \emph{trace} of a normally non-singular map is the \(\Grd\)\nb-map
  \[
  f\defeq \proj{\Triv^\Target}\circ\hat{f}\circ\zers{\VB}\colon
  \Source\mono\total{\VB}\opem\total{\Triv^\Target}\epi\Target.
  \]
  Its \emph{degree} is \(\dim \VB-\dim\Triv\) if this locally constant function on~\(\Source\) is constant (otherwise the degree is not defined).  Its \emph{stable normal bundle} is \([\VB]-[\Triv^\Source]\), viewed as an element in the Grothendieck group of the monoid of \(\Coh\)\nb-oriented subtrivial \(\Grd\)\nb-vector bundles on~\(\Source\).

  The normally non-singular \(\Grd\)\nb-map \((\VB,\Triv,\hat{f})\) is called a \emph{normally non-singular embedding} if \(\Triv=0\), so that \(\proj{\Triv^\Target} = \Id_{\Target}\) and \(f=\hat{f}\circ\zers{\VB}\); it is called a \emph{special normally non-singular submersion} if \(\VB=0\), so that \(\zers{\VB}=\Id_\Source\) and \(f=\proj{\Triv^\Target}\circ \hat{f}\).
\end{definition}

The assumption that~\(\Triv\) should be trivial is needed to define the composition of normally non-singular maps~-- this requires extending the \(\Grd\)\nb-vector bundle~\(\Triv^\Target\) to larger spaces, which only works in a canonical way for trivial \(\Grd\)\nb-vector bundles.  As a consequence, a vector bundle projection \(\proj{\VB}\colon \total{\VB}\to\Target\) is the trace of a special normally non-singular submersion if and only if~\(\VB\) is trivial.  If~\(\VB\) is subtrivial, then we may at least lift~\(\proj{\VB}\) to a normally non-singular map (see~\cite{Emerson-Meyer:Normal_maps}).  Since some manipulations with correspondences require~\(\proj{\VB}\) to have such a normally non-singular lifting, we need~\(\VB\) to be subtrivial in Definition~\ref{def:normal_map}.  This assumption is already made in~\cite{Emerson-Meyer:Normal_maps}, but it only becomes relevant here.

An \(\Coh\)\nb-oriented normally non-singular map~\(f\) from~\(\Source\) to~\(\Target\) generates a wrong-way map \(f!\colon \Coh^*_\Base(\Source)\to \Coh^*_\Base(\Target)\), see~\cite{Emerson-Meyer:Normal_maps}.  The notion of equivalence for normally non-singular maps is based on a natural notion of isotopy and on a lifting along trivial \(\Grd\)\nb-vector bundles.  We refer to~\cite{Emerson-Meyer:Normal_maps} for the definition of the composition and exterior product of normally non-singular maps.  The topological wrong-way functoriality \(f\mapsto f!\) is well-defined on equivalence classes and is compatible with composition and exterior products.

Now let \(\Source\) and~\(\Target\) be smooth \(\Grd\)\nb-manifolds (see~\cite{Emerson-Meyer:Normal_maps}).  Then we may also consider smooth normally non-singular maps from~\(\Source\) to~\(\Target\).  For such maps, we require a smooth structure on the \(\Grd\)\nb-vector bundle~\(\VB\) (this is automatic for~\(\Triv\)) and assume that~\(\hat{f}\) is a fibrewise diffeomorphism.  Smooth equivalence for smooth normally non-singular maps is based on smooth isotopies and lifting.  Under suitable technical hypotheses, any smooth map \(\Source\to\Target\) lifts to a smooth normally non-singular map, which is unique up to smooth equivalence.  For instance, this works if~\(\Grd\) is a compact group and~\(\Source\) is compact, or if \(\Grd=G\ltimes\Base\) for a discrete group~\(G\) and a proper \(G\)\nb-CW-complex~\(\Base\) with finite covering dimension and with uniformly bounded size of the isotropy groups (see~\cite{Emerson-Meyer:Normal_maps}).  There are also examples of compact groupoids for which all this fails, that is, there may be smooth maps that do not lift to smooth normally non-singular maps.  These counterexamples oblige us to use normally non-singular maps.

\subsection{The definition of correspondence}
\label{sec:def_correspondence}

\begin{definition}
  \label{def:correspondence}
  A \emph{\textup(\(\Grd\)\nb-equivariant, \(\Coh\)\nb-oriented\textup) correspondence} from~\(\Source\) to~\(\Target\) is a quadruple \((\Midd,\mapl,\mapr,\Kclass)\), where
  \begin{itemize}
  \item \(\Midd\) is a \(\Grd\)\nb-space (\(M\) for middle);
  \item \(\mapl\colon \Midd\to\Source\) is a \(\Grd\)\nb-map (\(\mapl\) for backwards);
  \item \(\mapr\colon \Midd\to\Target\) is an \(\Coh\)\nb-oriented normally non-singular \(\Grd\)\nb-map (\(\mapr\) for forwards);
  \item \(\Kclass\) belongs to \(\Coh^*_\Source(\Midd)\); here we use~\(\mapl\) to view~\(\Midd\) as a space over~\(\Source\).
  \end{itemize}
  The \emph{degree} of a correspondence is the sum of the degrees of \(\mapr\) and~\(\Kclass\) (it need not be defined).

  A correspondence \((\Midd,\mapl,\mapr,\Kclass)\) is called \emph{proper} if \(\mapl\colon \Midd\to\Source\) is proper.  Then any closed subset of~\(\Midd\) --~including~\(\Midd\) itself~-- is \(\Source\)\nb-compact, so that \(\Coh^*_\Source(\Midd) \cong \Coh^*(\Midd)\).
\end{definition}

Our definition deviates from previous ones (see \cites{Baum-Douglas:K-homology, Connes-Skandalis:Longitudinal, Raven:Thesis}) in two aspects: we do not require~\(\mapl\) to be proper, and we let~\(\mapr\) be a normally non-singular map instead of a smooth map.  We have explained in the introduction why these changes are helpful.

\begin{example}
  \label{exa:pullback_correspondence}
  A proper \(\Grd\)\nb-map \(\mapl\colon \Target\to\Source\) yields a correspondence \(\mapl^*\defeq (\Target,\mapl,\Id_\Target,1)\) from~\(\Source\) to~\(\Target\), where \(\Id_\Target\) denotes the identity normally non-singular map on~\(\Target\) and \(1\in\Coh^*(\Target) \cong \Coh^*_\Source(\Target)\) is the unit element.
\end{example}

\begin{example}
  \label{exa:wrong-way_correspondence}
  An \(\Coh\)\nb-oriented normally non-singular \(\Grd\)\nb-map \(\mapr\colon \Source\to\Target\) yields a correspondence \(\mapr!\defeq (\Source,\Id_\Source,\mapr,1)\) from~\(\Source\) to~\(\Target\).
\end{example}

\begin{example}
  \label{exa:multiplier_correspondence}
  Any class~\(\Kclass\) in \(\Coh^*(\Source)\) yields a correspondence \((\Source,\Id_\Source,\Id_\Source,\Kclass)\) from~\(\Source\) to itself.
\end{example}

\begin{definition}
  \label{def:add_correspondences}
  The \emph{sum} of two correspondences is their disjoint union:
  \[
  (\Midd_1,\mapl_1,\mapr_1,\Kclass_1) +
  (\Midd_2,\mapl_2,\mapr_2,\Kclass_2) \defeq
  (\Midd_1\sqcup\Midd_2,\mapl_1\sqcup\mapl_2,\mapr_1\sqcup\mapr_2,
  \Kclass_1\sqcup\Kclass_2).
  \]
\end{definition}

This uses \cite{Emerson-Meyer:Normal_maps}*{Lemma 4.30} and is well-defined, associative, and commutative up to isomorphism.  The empty correspondence with \(\Midd=\emptyset\) acts as zero.

Any correspondence decomposes uniquely as a sum of correspondences of degree~\(j\) for \(j\in\Z\).  To see this, write \(\mapr=(\VB,\Triv,\hat{\mapr})\), and decompose~\(\Midd\) into the disjoint subsets where \(\VB\), \(\mapr^*(\Triv)\), and~\(\Kclass\) have certain degrees.  The dimension assumption ensures that only finitely many non-empty pieces arise.

\subsection{Equivalence of correspondences}
\label{sec:equivalence_correspondence}

Now we define when two correspondences are equivalent.  For this, we introduce several elementary relations, which together generate equivalence.  Besides isomorphism, we need equivalence of the normally non-singular maps, bordism, and Thom modification; the latter replaces the notions of vector bundle modification used in \cites{Baum-Douglas:K-homology, Baum-Block:Bicycles}.  The only reason not to call it by that name is to avoid confusion with the two different notions that already go by it.

It is clear when two correspondences are \emph{isomorphic}.  In the following, we tacitly work with isomorphism classes of correspondences all the time.  \emph{Equivalence of normally non-singular maps} simply means that we consider the correspondences \((\Midd,\mapl,\mapr_0,\Kclass)\) and \((\Midd,\mapl,\mapr_1,\Kclass)\) equivalent if \(\mapr_0\) and~\(\mapr_1\) are equivalent \(\Coh\)\nb-oriented normally non-singular maps.

\begin{definition}
  \label{def:bordism}
  A \emph{bordism of correspondences} from~\(\Source\) to~\(\Target\) consists of
  \begin{itemize}
  \item \(\Bord\), a \(\Grd\)\nb-space;

  \item \(\mapl\), a \(\Grd\)\nb-map from~\(\Bord\) to~\(\Source\);

  \item \(\mapr\defeq (\VB,\Triv,\hat{\mapr})\), an \(\Coh\)\nb-oriented normally non-singular \(\Grd\)\nb-map from~\(\Bord\) to \(\Target\times[0,1]\) -- that is, \(\VB\) is a subtrivial \(\Grd\)\nb-vector bundle over~\(\Bord\), \(\Triv\) is a \(\Grd\)\nb-vector bundle over~\(\Base\), and~\(\hat{\mapr}\) is an open embedding from~\(\total{\VB}\) into \(\total{\Triv^{\Target\times[0,1]}} \cong \total{\Triv^\Target}\times[0,1]\) -- with the additional propery that there are subsets \(\bd_0\Bord,\bd_1\Bord\subseteq\Bord\) such that
    \[
    \hat{\mapr}^{-1}\bigl(\Target\times\{j\}\bigr)
    = \proj{\VB}^{-1}(\bd_j\Bord) \subseteq \total{\VB}
    \qquad\text{for \(j=0,1\);}
    \]

  \item \(\Kclass \in \Coh^*_\Source(\Bord)\).
  \end{itemize}
  Example~\ref{exa:smooth_bordism} explains the relationship to the more traditional notion of bordism.

  A bordism \(\Cor = (\Bord,\mapl,\mapr,\Kclass)\) from~\(\Source\) to~\(\Target\) restricts to correspondences
  \[
  \bd_j\Cor =
  (\bd_j\Bord,\mapl|_{\bd_j\Bord},\mapr|_{\bd_j\Bord}, \Kclass|_{\bd_j\Bord})
  \]
  from~\(\Source\) to~\(\Target\), where \(\mapr|_{\bd_j\Bord}\) denotes the normally non-singular map \((\VB|_{\bd_j\Bord},\Triv,\hat{\mapr}_j)\) from~\(\bd_j\Bord\) to~\(\Target\) with \(\hat{\mapr}(\vb) = (\hat{\mapr}_j(\vb),j)\) for \(\vb\in \proj{\VB}^{-1}(\bd_j\Bord)\), \(j=0,1\).

  We call these correspondences \(\bd_0\Cor\) and~\(\bd_1\Cor\) \emph{bordant} and write
  \[
  \bd_0\Cor \bordant \bd_1\Cor.
  \]
\end{definition}

Finally, we incorporate Thom isomorphisms:

\begin{definition}
  \label{def:Thom_iso_correspondence}
  Let \(\Cor\defeq (\Midd,\mapl,\mapr,\Kclass)\) be a correspondence from~\(\Source\) to~\(\Target\) and let~\(\VB\) be a subtrivial \(\Coh\)\nb-oriented \(\Grd\)\nb-vector bundle over~\(\Midd\).  Let \(\proj{\VB}\colon \total{\VB}\epi\Midd\) be the bundle projection, viewed as an \(\Coh\)\nb-oriented normally non-singular \(\Grd\)\nb-map.  The \emph{Thom modification}~\(\Cor^\VB\) of~\(\Cor\) with respect to~\(\VB\) is the correspondence
  \[
  \bigl(\total{\VB}, \mapl\circ\proj{\VB}, \mapr\circ\proj{\VB}, \Thom_\VB(\Kclass)\bigr);
  \]
  here \(\mapr\circ\proj{\VB}\) denotes the composition of \(\Coh\)\nb-oriented normally non-singular maps and~\(\tau_\VB\) denotes the Thom isomorphism \(\Coh^*_\Source(\Midd) \to \Coh^*_\Source(\total{\VB})\) for~\(\VB\), shifting degrees by \(+\dim (\VB)\) and given by composing pull-back with multiplication by an assumed \emph{Thom class}, or \emph{orientation class} in \(\Coh^{\dim \VB}_\Source(\total{\VB})\) (see \cite{Emerson-Meyer:Normal_maps}*{Definition 5.1}).
  \end{definition}

\begin{definition}
  \label{def:equivalence_correspondence}
  \emph{Equivalence} of correspondences is the equivalence relation on the set of correspondences from~\(\Source\) to~\(\Target\) generated by equivalence of normally non-singular maps, bordism, and Thom modification.  Let \(\Bic^*(\Source,\Target)\) be the set of equivalence classes of correspondences from~\(\Source\) to~\(\Target\).
\end{definition}

Equivalence preserves the degree and the addition of correspondences, so that \(\Coh^*(\Source,\Target)\) becomes a graded monoid.

We will show below that reversing the \(\Coh\)\nb-orientation on~\(\mapr\) provides additive inverses, so that \(\Bic^*(\Source,\Target)\) is a graded Abelian group.

\subsection{Examples of bordisms}
\label{sec:bordism}

We establish that bordism is an equivalence relation and that it contains homotopy for the maps \(\mapl\colon \Midd\to\Source\) and isotopy for the normally non-singular maps \(\mapr\colon \Midd\to\Target\).  We also construct some important examples of bordisms.

\begin{proposition}
  \label{pro:bordism_equivalence_relation}
  The relation~\(\bordant\) is an equivalence relation on correspondences from~\(\Source\) to~\(\Target\).
\end{proposition}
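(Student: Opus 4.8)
The plan is to check reflexivity, symmetry, and transitivity of $\bordant$ directly, producing an explicit bordism in each case. For \emph{reflexivity}, given a correspondence $\Cor=(\Midd,\mapl,\mapr,\Kclass)$ with $\mapr=(\VB,\Triv,\hat{\mapr})$, I would form the ``cylinder'' bordism over $\Bord\defeq\Midd\times[0,1]$: take $\mapl\circ\pr_\Midd$ for the backward map, pull $\VB$ and $\Kclass$ back along $\pr_\Midd$, and use the open embedding $\total{\VB}\times[0,1]\opem\total{\Triv^\Target}\times[0,1]$ given by $\hat{\mapr}\times\Id_{[0,1]}$. With $\bd_0\Bord=\bd_1\Bord=\Midd$ (as subsets of $\Midd\times[0,1]$ identified with the two ends), the compatibility condition $\hat{\mapr}^{-1}(\Target\times\{j\})=\proj{\VB}^{-1}(\bd_j\Bord)$ holds by construction, and both restrictions $\bd_j\Cor$ are visibly isomorphic to $\Cor$. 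For \emph{symmetry}, given a bordism $\Cor=(\Bord,\mapl,\mapr,\Kclass)$ with ends $\bd_0\Cor,\bd_1\Cor$, I would precompose with the self-homeomorphism $\Id_\Target\times(t\mapsto 1-t)$ of $\Target\times[0,1]$: this is again a normally non-singular map from $\Bord$ to $\Target\times[0,1]$ (compose $\hat{\mapr}$ with the flip on the $[0,1]$ coordinate of $\total{\Triv^\Target}\times[0,1]$), it swaps the roles of $\bd_0\Bord$ and $\bd_1\Bord$, and one checks the restricted normally non-singular maps $\mapr|_{\bd_j\Bord}$ are unchanged, so it witnesses $\bd_1\Cor\bordant\bd_0\Cor$.

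The substantive case is \emph{transitivity}: given bordisms $\Cor$ from $\bd_0\Cor$ to $\bd_1\Cor$ and $\Cor'$ from $\bd_0\Cor'$ to $\bd_1\Cor'$ with $\bd_1\Cor\cong\bd_0\Cor'$, I would glue $\Bord$ and $\Bord'$ along the identified boundary pieces $\bd_1\Bord\cong\bd_0\Bord'$ to form $\Bord''$, reparametrise $[0,1]\cup_{\{1\}\sim\{0\}}[0,1]\cong[0,1]$, and assemble the data. The backward maps $\mapl,\mapl'$ glue since they agree on the overlap (up to the fixed isomorphism). For the normally non-singular map, the subtrivial bundles $\VB,\VB'$ and the $\Coh$-orientation classes must be glued over $\bd_1\Bord\cong\bd_0\Bord'$, as must the open embeddings $\hat{\mapr},\hat{\mapr}'$ into $\total{\Triv^\Target}\times[0,1]$; here one should first use equivalence/isotopy of normally non-singular maps (collar neighbourhoods of the boundary pieces in $\Bord$ and $\Bord'$) to arrange that near the glued locus the data is a product, so that the glued $\hat{\mapr}''$ is genuinely an open embedding and $\VB''$ is a well-defined subtrivial bundle. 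Finally the class $\Kclass\in\Coh^*_\Source(\Bord)$ and $\Kclass'\in\Coh^*_\Source(\Bord')$ glue to a class in $\Coh^*_\Source(\Bord'')$; the restrictions to the two remaining ends $\bd_0\Bord$ and $\bd_1\Bord'$ recover $\bd_0\Cor$ and $\bd_1\Cor'$.

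I expect the main obstacle to be the gluing step for transitivity, specifically arranging collar-type product structure so that the glued open embedding $\hat{\mapr}''$ and the glued bundle $\VB''$ are well-defined without destroying the normally non-singular structure. This is where one must invoke that we work up to equivalence of normally non-singular maps (isotopy and lifting), and where the subtriviality of $\VB$ and triviality of $\Triv$ are convenient, since $\Triv$ lives over $\Base$ and needs no gluing while $\VB''$ can be realised inside a trivial bundle. The class-gluing for $\Coh^*_\Source$ uses a Mayer--Vietoris type argument for cohomology with $\Source$-compact support together with the product structure near the overlap; since the support conditions are preserved under the reparametrisation, this causes no trouble. The remaining verifications -- that degrees match and that the boundary pieces are carried correctly through the reparametrisation -- are routine bookkeeping.
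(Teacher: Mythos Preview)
Your proposal is correct and follows the same route as the paper's proof: cylinder for reflexivity, flip of the \([0,1]\) factor for symmetry, and gluing along the common boundary piece (with Mayer--Vietoris for~\(\Kclass\)) for transitivity. The one difference is that the paper does not first arrange collar or product structure: since \(\bd_1\Cor_1\cong\bd_0\Cor_2\) forces \(\Triv_1\cong\Triv_2\) and makes the restricted data agree exactly on the glued locus, simply rescaling \(\hat{\mapr}_1,\hat{\mapr}_2\) to land in \(\total{\Triv^\Target}\times[0,\nicefrac12]\) and \(\total{\Triv^\Target}\times[\nicefrac12,1]\) already yields an open embedding of the glued~\(\total{\VB_{12}}\), so your collar step is unnecessary.
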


\begin{proof}
  Let \(\Cor=(\Midd,\mapl,(\VB,\Triv,\hat{\mapr}),\Kclass)\) be a correspondence from~\(\Source\) to~\(\Target\).  Define
  \[
  \Bord\defeq \Midd\times[0,1],\quad
  \mapl' \defeq \mapl\circ p,\quad
  \VB'\defeq p^*(\VB),\quad
  \hat{\mapr}'(\vb,t) = \bigl(\hat{\mapr}(\vb),t\bigr),\quad
  \Kclass'\defeq p^*(\Kclass),
  \]
  where \(p\colon \Bord\to\Midd\) is the coordinate projection.  Then \((W,\mapl',(\VB',\Triv,\hat{\mapr}'),\Kclass')\) is a bordism between~\(\Cor\) and itself, so that~\(\bordant\) is reflexive.

  If \(\Cor = (\Bord,\mapl,(\VB,\Triv,\hat{\mapr}),\Kclass)\) is a bordism from~\(\Source\) to~\(\Target\), so is \((\Bord,\mapl,(\VB,\Triv,\sigma\circ \hat{\mapr}),\Kclass)\), where \(\sigma\colon \total{\Triv^\Target}\times[0,1]\to\total{\Triv^\Target}\times[0,1]\) maps \((\triv,t)\) to \((\triv,1-t)\).  This exchanges the roles of \(\bd_0\Cor\) and \(\bd_1\Cor\), proving that~\(\bordant\) is symmetric.

  Let \(\Cor_1 = (\Bord_1,\mapl_1,(\VB_1,\Triv_1,\hat{\mapr}_1),\Kclass_1)\) and \(\Cor_2 = (\Bord_2,\mapl_2,(\VB_2,\Triv_2,\hat{\mapr}_2),\Kclass_2)\) be bordisms such that the correspondences \(\bd_1\Cor_1\) and \(\bd_0\Cor_2\) are isomorphic.  Hence \(\Triv_1\cong\Triv_2\) -- we may even assume \(\Triv_1=\Triv_2\) -- and there is a homeomorphism \(\bd_1\Bord_1 \cong \bd_0\Bord_2\) compatible with the other structure.  It allows us to glue together \(\Bord_1\) and~\(\Bord_2\) to a \(\Grd\)\nb-space \(\Bord_{12} \defeq \Bord_1 \cup_{\bd_1\Bord_1 \cong \bd_0\Bord_2} \Bord_2\) and \(\mapl_1\) and~\(\mapl_2\) to a \(\Grd\)\nb-map \(\mapl_{12}\colon \Bord_{12}\to\Source\).  The \(\Grd\)\nb-vector bundles \(\VB_1\) and~\(\VB_2\) combine to a \(\Grd\)\nb-vector bundle~\(\VB_{12}\) on~\(\Bord_{12}\), which inherits an \(\Coh\)\nb-orientation by \cite{Emerson-Meyer:Normal_maps}*{Lemma 5.6}.  The classes \(\Kclass_1\) and~\(\Kclass_2\) combine to a class \(\Kclass_{12}\in\Coh^*_\Source(\Bord_{12})\) by the Mayer--Vietoris sequence for~\(\Coh^*_\Source\).  Rescale \(\hat{\mapr}_1\) and~\(\hat{\mapr}_2\) to open embeddings from \(\total{\VB_1}\) and~\(\total{\VB_2}\) to \(\total{\Triv^\Target}\times[0,\nicefrac12]\) and \(\total{\Triv^\Target}\times[\nicefrac12,1]\) that map \(\proj{\VB_1}^{-1}(\bd_0\Bord_1)\) to \(\total{\Triv^\Target}\times\{0\}\), \(\proj{\VB_1}^{-1}(\bd_1\Bord_1)\) and \(\proj{\VB_2}^{-1}(\bd_0\Bord_2)\) to \(\total{\Triv^\Target}\times\{\nicefrac12\}\), and \(\proj{\VB_2}^{-1}(\bd_1\Bord_2)\) to \(\total{\Triv^\Target}\times\{1\}\).  These combine to an open embedding~\(\hat{\mapr}_{12}\) from~\(\total{\VB_{12}}\) into \(\total{\Triv^\Target}\times[0,1]\).  This yields a bordism \((\Bord_{12},\mapl_{12},(\VB_{12},\Triv_{12},\hat{\mapr}_{12}), \Kclass_{12})\) from~\(\bd_0\Cor_1\) to~\(\bd_1\Cor_2\).  Thus~\(\bordant\) is transitive.
\end{proof}

\begin{lemma}
  \label{lem:bordism_homotopy}
  Let \((\Midd,\mapl_0,\mapr_0,\Kclass)\) be a correspondence from~\(\Source\) to~\(\Target\).  Let~\(\mapl_0\) be homotopic to~\(\mapl_1\) and let~\(\hat{\mapr}_0\) be isotopic to~\(\hat{\mapr}_1\).  Then the correspondences \((\Midd,\mapl_0,\mapr_0,\Kclass)\) and \((\Midd,\mapl_1,\mapr_1,\Kclass)\) are bordant.
\end{lemma}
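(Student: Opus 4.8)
The plan is to build one bordism $\Bord \defeq \Midd \times [0,1]$ realising \emph{both} a homotopy $\mapl_0 \simeq \mapl_1$ and an isotopy $\hat\mapr_0 \sim \hat\mapr_1$ simultaneously, so that its two boundary restrictions are exactly the two given correspondences. Concretely, let $p \colon \Midd\times[0,1]\to\Midd$ be the coordinate projection and let $H \colon \Midd\times[0,1]\to\Source$ be a homotopy with $H_0 = \mapl_0$, $H_1 = \mapl_1$; set $\mapl \defeq H$. For the $\Coh$-datum, take $\Kclass' \defeq p^*(\Kclass) \in \Coh^*_\Source(\Midd\times[0,1])$, where on the left $\Midd\times[0,1]$ is viewed as a space over~$\Source$ via~$H$; this makes sense because $p$ restricted to the support of $\Kclass \times [0,1]$ is $\Source$-proper over~$\Source$ (the support is $\mapl_0^{-1}$-proper by hypothesis and $H$ agrees with $\mapl_0 \circ p$ up to homotopy through $\Source$-proper maps, so the $\Source$-compact support condition is preserved). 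The real content is in the normally non-singular map piece.

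The heart of the argument is therefore the isotopy. Write $\mapr_0 = (\VB, \Triv, \hat\mapr_0)$ and $\mapr_1 = (\VB, \Triv, \hat\mapr_1)$; by definition of isotopy of normally non-singular maps we have the \emph{same} $\VB$ and $\Triv$ and a continuous family of open embeddings $\hat\mapr_t \colon \total{\VB} \opem \total{\Triv^\Target}$ for $t\in[0,1]$, jointly continuous in $t$. First I would form $\VB' \defeq p^*(\VB)$, a subtrivial $\Coh$-oriented $\Grd$-vector bundle over $\Midd\times[0,1]$ (pull-back of subtrivial is subtrivial, and the orientation pulls back), and define $\hat\mapr' \colon \total{\VB'} \opem \total{\Triv^{\Target\times[0,1]}} \cong \total{\Triv^\Target}\times[0,1]$ by $\hat\mapr'(\vb,t) \defeq (\hat\mapr_t(\vb), t)$. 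That this is an open embedding is where I expect the main obstacle: fibrewise in~$t$ each $\hat\mapr_t$ is an open embedding, but one must check that the union over $t$ is open in the product and that $\hat\mapr'$ is a homeomorphism onto its image — this is exactly the standard fact that an isotopy of open embeddings assembles to an open embedding of the cylinder, and the definition of ``isotopy'' in~\cite{Emerson-Meyer:Normal_maps} should be set up to guarantee precisely this; I would cite that. The boundary condition in Definition~\ref{def:bordism} is then satisfied with $\bd_j\Bord \defeq \Midd\times\{j\}$ and $\bd_j\VB' = \VB$, since $\hat\mapr'^{-1}(\Target\times\{j\}) = \total{\VB}\times\{j\} = \proj{\VB'}^{-1}(\Midd\times\{j\})$, and the induced boundary normally non-singular map $\mapr'|_{\bd_j\Bord}$ is $(\VB,\Triv,\hat\mapr_j)=\mapr_j$ by construction.

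Finally I would verify that $\bd_j\Cor'$, for $\Cor' \defeq (\Bord, \mapl, \mapr', \Kclass')$, is the correspondence $(\Midd, \mapl_j, \mapr_j, \Kclass)$: the middle space is $\Midd\times\{j\}\cong\Midd$; the backward map is $\mapl|_{\Midd\times\{j\}} = H_j = \mapl_j$; the forward normally non-singular map is $\mapr_j$ as just noted; and the $\Coh$-datum is $\Kclass'|_{\Midd\times\{j\}} = p^*(\Kclass)|_{\Midd\times\{j\}} = \Kclass$ under the canonical identification, compatibly with the (varying) map to~$\Source$ since $H_j=\mapl_j$. This gives $\bd_0\Cor' \bordant \bd_1\Cor'$, i.e.\ $(\Midd,\mapl_0,\mapr_0,\Kclass)\bordant(\Midd,\mapl_1,\mapr_1,\Kclass)$, as desired. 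The only genuinely non-formal steps are the openness of $\hat\mapr'$ (handled by the isotopy formalism of~\cite{Emerson-Meyer:Normal_maps}) and the preservation of the $\Source$-compact support condition for $\Kclass'$ (handled by the homotopy invariance of support conditions, since homotopic $\Grd$-maps that are $\Source$-proper on a closed invariant subspace induce the same support behaviour); everything else is a routine unwinding of Definitions~\ref{def:correspondence} and~\ref{def:bordism}.
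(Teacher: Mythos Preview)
Your proposal is correct and follows exactly the same approach as the paper: build the bordism on \(\Bord=\Midd\times[0,1]\) with \(\VB'=p^*(\VB)\), \(\hat\mapr'(\vb,t)=(\hat\mapr_t(\vb),t)\), \(\mapl'=H\), and \(\Kclass'=p^*(\Kclass)\). The paper's own proof is a single sentence pointing back to the reflexivity construction in Proposition~\ref{pro:bordism_equivalence_relation}, merely replacing \(\mapl\circ p\) by the homotopy and the constant \(\hat\mapr\) by the isotopy; you have unwound that sentence and flagged the two places (openness of \(\hat\mapr'\), support condition for \(\Kclass'\)) where something non-formal is being used, which the paper leaves implicit.
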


\begin{proof}
  The bordism is constructed as in the proof that bordism is reflexive; but this time, \(\mapl'\) is replaced by a homotopy between~\(\mapl_0\) and~\(\mapl_1\), and~\(\hat{\mapr}'\) by an isotopy between \(\hat{\mapr}_0\) and~\(\hat{\mapr}_1\).
\end{proof}

\begin{example}
  \label{exa:smooth_bordism}
  Let \(\Source\) and~\(\Target\) be smooth manifolds and let~\(\Bord\) be a smooth manifold with boundary~\(\bd\Bord\), decomposed into two disjoint subsets: \(\bd\Bord=\bd_0\Bord\sqcup\bd_1\Bord\).  Let \(\Kclass\in\Coh^*_\Source(\Bord)\), let \(\mapl\colon \Bord\to\Source\) be a smooth map, and let \(\mapr\colon \Bord\to\Target\) be a smooth map that is \(\Coh\)\nb-oriented in the sense that \(\mapr^*(\Tvert\Target) \oplus \Normal_\Bord\) is \(\Coh\)\nb-oriented or, equivalently, \([\Tvert\Source]-f^*[\Tvert\Target]\) is stably \(\Coh\)\nb-oriented.  We want to construct a bordism from this data.

  We define the stable normal bundle~\(\Normal_\Bord\) as the restriction of~\(\Normal_{D\Bord}\) to~\(\Bord\), where \(D\Bord \defeq \Bord\cup_{\bd\Bord}\Bord\) is the double of~\(\Bord\) -- a smooth manifold.  Recall that \(\Normal_{D\Bord}\) is the normal bundle of a smooth embedding \(h\colon D\Bord\to\R^n\) for some \(n\in\N\).  We lift~\(\mapr\) to a normally non-singular map \(\NM=(\VB,\R^n,\hat{\mapr})\) from~\(\Bord\) to \(\Target\times[0,1]\) as follows.  Let \(k\colon \Bord\to[0,1]\) be a smooth map with \(\bd_j\Bord=k^{-1}(j)\) for \(j=0,1\) and with non-vanishing first derivative on~\(\bd\Bord\).  Then \((\mapr,h|_\Bord,k)\colon \Bord\to \Target\times\R^n\times[0,1]\) identifies~\(\Bord\) with a neat submanifold of \(\Target\times\R^n\times[0,1]\) (see \cite{Hirsch:Diff_Top}*{page 30}).  The Tubular Neighbourhood Theorem for smooth manifolds with boundary shows that \((\mapr,h|_\Bord,k)\) extends to a diffeomorphism~\(\hat{\mapr}\) from its normal bundle \(\VB \cong \mapr^*(\Tvert\Target)\oplus \Normal_\Bord\oplus\R\) onto an open subset of \(\Target\times\R^n\times[0,1]\).  We get an \(\Coh\)\nb-oriented normally non-singular map \(\NM\defeq (\VB,\hat{\mapr},\R^n)\) from~\(\Bord\) to \(\Target\times[0,1]\).

  Putting everything together, we get a bordism of correspondences \((\Bord,\mapl,\NM,\Kclass)\) with \(\bd_0\Bord\) and~\(\bd_1\Bord\) as specified.  Furthermore, the trace of~\(\NM\) lifts~\(\mapr\) to a map \(\Bord\to \Target\times[0,1]\), so that \(\NM|_{\bd_j\Bord}\) is a normally non-singular map with trace~\(\mapr|_{\bd_j\Bord}\colon \bd_j\Bord\to\Target\).
\end{example}

\begin{example}
  \label{exa:bordism_extend_from_open}
  Let \((\Midd,\mapl,\mapr,\Kclass)\) be a correspondence from~\(\Source\) to~\(\Target\).  Let \(\Midd'\subseteq\Midd\) be an open \(\Grd\)\nb-invariant subset and assume that there is \(\Kclass'\in\Coh^*_\Source(\Midd')\) that is mapped to~\(\Kclass\) by the canonical map \(\Coh^*_\Source(\Midd')\to\Coh^*_\Source(\Midd)\).  We claim that the correspondences \((\Midd,\mapl,\mapr,\Kclass)\) and \((\Midd',\mapl|_{\Midd'},\mapr|_{\Midd'},\Kclass')\) are bordant.  Here \(\mapr|_{\Midd'}\) denotes the composition of~\(\mapr\) with the open embedding \(\Midd'\opem\Midd\), viewed as a normally non-singular map; if \(\mapr=(\VB,\Triv,\hat{\mapr})\), then \(\mapr|_{\Midd'} = (\VB|_{\Midd'},\Triv,\hat{\mapr}|_{\Midd'})\).

  The underlying space of the bordism is the \(\Grd\)\nb-invariant open subset
  \[
  \Bord \defeq \Midd'\times\{0\}\cup \Midd\times(0,1] \subseteq \Midd\times[0,1]
  \]
  with the subspace topology, induced \(\Grd\)\nb-action, and the obvious maps to \(\Source\) and~\(\Target\) (see the proof of Proposition~\ref{pro:bordism_equivalence_relation}).  We may pull back~\(\Kclass'\) to a class in \(\Coh^*_\Source(\Midd'\times[0,1])\), which then extends to a class in \(\Coh^*_\Source(\Bord)\) whose restrictions to \(\Midd'\times\{0\}\) and \(\Midd\times\{1\}\) are \(\Kclass'\) and~\(\Kclass\), respectively.  This yields the required bordism between \((\Midd,\mapl,\mapr,\Kclass)\) and \((\Midd',\mapl|_{\Midd'},\mapr|_{\Midd'},\Kclass')\).
\end{example}

It is unclear from our definition of bordism which subsets \(\bd_0\Bord\) and~\(\bd_1\Bord\) of~\(\Bord\) are possible.  The following definition provides a criterion for this:

\begin{definition}
  \label{def:boundary}
  Let~\(\Bord\) be a \(\Grd\)\nb-space.  A closed \(\Grd\)\nb-invariant subset \(\bd\Bord\) is called a \emph{boundary} of~\(\Bord\) if the embedding \(\bd\Bord\times\{0\} \cong \bd\Bord\to\Bord\) extends to a \(\Grd\)\nb-equivariant open embedding \(c\colon \bd\Bord\times[0,1)\opem\Bord\); the map~\(c\) is called a \emph{collar} for~\(\bd\Bord\).
\end{definition}

If \(\bd\Bord\subseteq\Bord\) is a boundary, then we let \(\Bord^\circ\defeq \Bord\setminus \bd\Bord\) be the \emph{interior} of~\(\Bord\).

We identify \(\bd\Bord\times[0,1)\) with a subset of~\(\Bord\) using the collar.  The following lemma uses the auxiliary orientation-preserving diffeomorphism:
\[
\varphi\colon \R \xrightarrow{\cong} (0,1),
\qquad
t \mapsto \frac12+\frac{t}{2\sqrt{1+t^2}}.
\]
Notice that \(\varphi(-t) = 1 -\varphi(t)\).

\begin{lemma}
  \label{lem:bordism_from_boundary}
  Let \(\bd_0\Bord\sqcup\bd_1\Bord\subseteq \Bord\) be a boundary.  Then there is an open embedding \(h\colon \Bord\times\R \opem \Bord^\circ\times[0,1]\) with the following properties:
  \begin{itemize}
  \item \(h(\bord,t) = \bigl(\bord,\varphi(t)\bigr)\) for \(\bord\notin \bd\Bord\times[0,\nicefrac12)\);

  \item \(h(w,t)\in W^\circ\times(0,1)\) for \(w\in W\setminus\bd\Bord\);

  \item \(h\bigl((\bord,0),t\bigr) = \Bigl(\bigl(\bord,\varphi(-t)/2\bigr),0\Bigr)\) for \(\bord\in\bd_0\Bord\);

  \item \(h\bigl((\bord,0),t\bigr) = \Bigl(\bigl(\bord,\varphi(t)/2\bigr),1\Bigr)\) for \(\bord\in\bd_1\Bord\).
  \end{itemize}
\end{lemma}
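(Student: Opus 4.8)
The plan is to write $h$ down by hand, using the collar to split $\Bord$ into a collar neighbourhood of $\bd\Bord$ and a complementary region, putting $h(\bord,t)=(\bord,\varphi(t))$ on the region, and on the collar writing an explicit formula that bends the $\R$-coordinate down onto the old boundary. First I would shrink the given collar $c\colon\bd\Bord\times[0,1)\opem\Bord$ so that $c\bigl(\bd\Bord\times[0,\nicefrac12]\bigr)$ becomes closed in~$\Bord$: since $\bd\Bord$ is closed and $\Bord$ is paracompact Hausdorff, there is an open $\Grd$-invariant $N$ with $\bd\Bord\subseteq N$ and $\overline N\subseteq c\bigl(\bd\Bord\times[0,\nicefrac12)\bigr)$ together with a continuous $\Grd$-invariant $\mu\colon\bd\Bord\to(0,\nicefrac12)$ such that $c\bigl(\{\bord\}\times[0,\mu(\bord)]\bigr)\subseteq N$ for all~$\bord$, and reparametrising the collar fibrewise by a homeomorphism of $[0,1)$ that sends~$\nicefrac12$ to~$\mu(\bord)$ achieves $c\bigl(\bd\Bord\times[0,\nicefrac12]\bigr)\subseteq\overline N$, which is closed. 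With this done, set $U_j\defeq c\bigl(\bd_j\Bord\times[0,1)\bigr)$ for $j=0,1$ -- disjoint open $\Grd$-invariant neighbourhoods of $\bd_0\Bord$ and $\bd_1\Bord$, disjoint because $c$ is injective and $\bd_0\Bord\cap\bd_1\Bord=\emptyset$ -- and $\Omega\defeq\Bord\setminus c\bigl(\bd\Bord\times[0,\nicefrac12]\bigr)$, an open set with $\Omega\subseteq\Bord^\circ$ and $\Omega\cup U_0\cup U_1=\Bord$. Put $h(\bord,t)\defeq(\bord,\varphi(t))$ for $\bord\in\Omega$, put
\[
h\bigl(c(\bord,u),t\bigr)\defeq\Bigl(c\bigl(\bord,A(u,t)\bigr),\,B(u,t)\Bigr),
\quad
A(u,t)\defeq u+\max(1-2u,0)\,\tfrac{\varphi(-t)}{2},
\quad
B(u,t)\defeq\min(2u,1)\,\varphi(t)
\]
on $U_0$ (identified with $\bd_0\Bord\times[0,1)$ through~$c$), and put $h\bigl(c(\bord,u),t\bigr)\defeq\bigl(c(\bord,A(u,-t)),\,1-B(u,-t)\bigr)$ on~$U_1$, the reflections $t\mapsto-t$ and $b\mapsto1-b$ being matched by $\varphi(-t)=1-\varphi(t)$.

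Next I would check that these three formulas agree on overlaps: $U_0\cap U_1=\emptyset$, and on $\Omega\cap U_j=c\bigl(\bd_j\Bord\times(\nicefrac12,1)\bigr)$ the collar coordinate exceeds~$\nicefrac12$, so $A(u,\pm t)=u$ and $B(u,\pm t)=\varphi(\pm t)$ and the collar formula reduces to $\bigl(c(\bord,u),\varphi(t)\bigr)$, which is the value given by the formula on~$\Omega$. By the pasting lemma for the open cover $\{\Omega,U_0,U_1\}$, $h$ is a well-defined continuous $\Grd$-map, and it lands in $\Bord^\circ\times[0,1]$ since $A(u,t)>0$ keeps the first coordinate off $\bd\Bord$ while $0\le B(u,t)<1$. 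The listed properties then follow by inspection: property~(1) holds on~$\Omega$ and wherever the collar coordinate is at least~$\nicefrac12$; for $w\notin\bd\Bord$ the second coordinate vanishes only when $u=0$ (that is, $w\in\bd\Bord$) and never equals~$1$, hence lies in $(0,1)$, which is property~(2); and putting $u=0$ gives $A(0,t)=\tfrac{\varphi(-t)}{2}$ and $B(0,t)=0$, which are properties~(3) and~(4).

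The substantive part is that $h$ is an open embedding. For injectivity I would observe that the system $q=A(u,t)$, $b=B(u,t)$ has a unique solution: if $q\ge\nicefrac12$ then $u=q$ and $t=\varphi^{-1}(b)$, while if $q<\nicefrac12$ then $u<\nicefrac12$, $\varphi(t)=b+1-2q$ and $1-2u=(1-2q)/\varphi(t)$; and across the three pieces, points in the images of $\Omega\times\R$, $U_0\times\R$, $U_1\times\R$ can agree only where the defining formulas already agree. For openness I would compute, in the chart $c\colon\bd_0\Bord\times(0,1)\hookrightarrow\Bord^\circ$, that $h(U_0\times\R)$ is precisely the set of $\bigl(c(\bord,p),b\bigr)$ with either $p\in(0,\nicefrac12)$ and $0\le b<2p$, or $p\in[\nicefrac12,1)$ and $b\in(0,1)$; this is open in $\Bord^\circ\times[0,1]$, as is the mirror set for~$U_1$, as is $h(\Omega\times\R)=\Omega\times(0,1)$, so the image of~$h$ is open, and since $h^{-1}$ is continuous on each of these three open pieces (by the explicit inversion above) it is a homeomorphism onto the image. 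I expect the main work to lie in this last image computation, and in verifying that $\Omega\times(0,1)$ glues openly onto the collar piece across the collar value~$\nicefrac12$; the preliminary shrinking of the collar is precisely what rules out a pathology here, because otherwise $\Omega$ need not even be open.
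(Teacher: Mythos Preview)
Your approach is essentially the paper's: your formulas $A(u,t)$ and $B(u,t)$ on the collar are exactly the affine interpolation the paper writes down (for $u\le\nicefrac12$ they reduce to $(u+(\nicefrac12-u)\varphi(-t),\,2u\varphi(t))$, which is the paper's expression $s-(s-\nicefrac12)\varphi(-t)$ and $2s\varphi(t)$), and both proofs put $h(\bord,t)=(\bord,\varphi(t))$ off the half-collar. You carry out the injectivity and image computations that the paper leaves as a ``routine computation,'' and you correctly flag a subtlety the paper glosses over: without further argument, $\Omega=W\setminus c(\bd W\times[0,\nicefrac12])$ need not be open, so the pasting lemma does not apply directly.

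One small gap in your collar-shrinking step: showing $c'(\bd W\times[0,\nicefrac12])\subseteq\overline N$ does not make this set closed --- containment in a closed set is not enough. The fix is to choose $\mu$ with room to spare, say $c(\{\bord\}\times[0,2\mu(\bord)])\subseteq N$ for all~$\bord$, and then check directly that the complement of $c(\{(\bord,s):s\le\mu(\bord)\})$ in~$W$ equals the union of the two open sets $W\setminus\overline N$ and $c(\{(\bord,s):s>\mu(\bord)\})$: points outside $\overline N$ that lie in the collar have $s>2\mu(\bord)>\mu(\bord)$ by the choice of~$\mu$, and points outside the collar lie in $W\setminus\overline N$ since $\overline N$ sits inside the collar.
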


\begin{proof}
  Let \(A\defeq \Bord \setminus \bd\Bord\times [0,\nicefrac12]\).  We put \(h(\bord,t) \defeq \bigl(\bord, \varphi(t)\bigr)\) if \(\bord\in A\) to fulfil the first condition; this maps \(A\times\R\) homeomorphically onto \(A\times (0,1)\).  On \(\bd\Bord\times[0,\nicefrac12]\times\R\), we connect the prescribed values on \(\bd\Bord\times\{0, \nicefrac12\}\times\R\) by an affine homotopy; that is, if \(\bord\in\bd\Bord\), \(s\in[0,\nicefrac12)\), and \(t\in\R\), then
  \[
  h\bigl((\bord,s),t\bigr) \defeq
  \begin{cases}
    \Bigl(\bigl(\bord,
    s - (s-\nicefrac12)\varphi(-t)\bigr),
    2s\varphi(t)\Bigr)&\text{if \(\bord\in\bd_0\Bord\),}\\
    \Bigl(\bigl(\bord,
    s - (s-\nicefrac12)\varphi(t)\bigr),
    1-2s\varphi(-t)\Bigr)&\text{if \(\bord\in\bd_1\Bord\).}\\
  \end{cases}
  \]

  A routine computation shows that~\(h\) maps \(\bd\Bord\times(0,\nicefrac12]\times\R\) homeomorphically onto a relatively open subset of itself.  Hence~\(h\) is an open embedding on \(\Bord\times\R\).
\end{proof}

\begin{example}
  \label{exa:disk_bundle_bordism}
  Let \((\Midd,\mapl,\mapr,\Kclass)\) be a correspondence from~\(\Source\) to~\(\Target\) and let~\(\VB\) be a subtrivial \(\Coh\)\nb-oriented \(\Grd\)\nb-vector bundle over~\(\Midd\) equipped with some \(\Grd\)\nb-invariant inner product.  Let \(\Sphere\VB\subseteq \Disk\VB\subseteq\total{\VB}\) be the unit sphere and unit disk bundles and let \(\proj{\Disk}\colon \Disk\VB\to\Midd\) and \(\proj{\Sphere}\colon \Sphere\VB\to\Midd\) be the canonical projections.

  The projection \(\proj{\VB}\colon \total{\VB}\epi\Midd\) is an \(\Coh\)\nb-oriented normally non-singular map by \cite{Emerson-Meyer:Normal_maps}*{Example 4.25}.  The embedding \(\Sphere\VB\to\total{\VB}\) is a normally non-singular embedding with constant normal bundle~\(\R\) for a suitable tubular neighbourhood, say,
  \begin{equation}
    \label{eq:sphere_tube}
    \Sphere\VB\times\R\opem \total{\VB},\qquad
    (\vb,t)\mapsto \vb\cdot \bigl(2-2\varphi(t)\bigr),
  \end{equation}
  with the auxiliary function~\(\varphi\) above.  Hence~\(\proj{\Sphere}\) is an \(\Coh\)\nb-oriented normally non-singular map.  We get a correspondence \(\bigl(\Sphere\VB,\mapl\circ \proj{\Sphere},\mapr\circ \proj{\Sphere},\proj{\Sphere}^*(\Kclass)\bigr)\) from~\(\Source\) to~\(\Target\).  We claim that this correspondence is bordant to the empty correspondence.

  We want to construct a bordism \((\Bord,\mapl',\mapr',\Kclass')\) with
  \[
  \Bord=\Disk\VB,\qquad
  \mapl'\defeq \mapl\circ \proj{\Disk},\qquad
  \Kclass'\defeq \proj{\Disk}^*(\mapl),\qquad
  \bd_0\Bord=\emptyset,\qquad
  \bd_1\Bord = \Sphere\VB.
  \]
  The \(\Coh\)\nb-oriented normally non-singular map \(\mapr\circ \proj{\VB}\colon \total{\VB}\to\Target\) pulls back to an \(\Coh\)\nb-oriented normally non-singular map \((\mapr \proj{\VB})\times[0,1]\colon \total{\VB}\times[0,1]\to\Target\times[0,1]\).  We let~\(\mapr'\) be the composition of \((\mapr \proj{\VB})\times[0,1]\) with the \(\Coh\)\nb-oriented normally non-singular embedding \((\Disk\VB\times\R,h)\) from~\(\Disk\VB\) to \(\total{\VB}\times[0,1]\), where \(h\colon \Disk\VB\times\R\to (\Disk\VB)^\circ\times[0,1] \subseteq \total{\VB}\times[0,1]\) is the open embedding constructed in Lemma~\ref{lem:bordism_from_boundary}.  Here we use the collar \(\Sphere\VB\times[0,1)\opem\Disk\VB\), \((\vb,t)\mapsto \vb\cdot(1-t)\).  The open embedding \(h|_{\Sphere\VB}\colon \Sphere\VB\times\R\opem\total{\VB}\) is isotopic to the tubular neighbourhood for~\(\Sphere\VB\) in~\eqref{eq:sphere_tube}.  Hence the boundary of \((\Bord,\mapl',\mapr',\Kclass')\) is equivalent to \(\bigl(\Sphere\VB,\mapl\circ \proj{\Sphere},\mapr\circ \proj{\Sphere},\proj{\Sphere}^*(\Kclass)\bigr)\).
\end{example}

\begin{example}
  \label{exa:inverse_correspondence}
  Let \(\Cor = (\Midd,\mapl,\mapr,\Kclass)\) be a correspondence from~\(\Source\) to~\(\Target\).  Let~\(-\mapr\) denote~\(\mapr\) with the opposite \(\Coh\)\nb-orientation and let \(-\Cor \defeq (\Midd,\mapl, -\mapr,\Kclass)\).  Up to bordism, this is inverse to~\(\Cor\), that is, the disjoint union \(\Cor\sqcup -\Cor\) is bordant to the empty correspondence.  As a consequence, bordism classes of \(\Grd\)\nb-equivariant correspondences from~\(\Source\) to~\(\Target\) form an Abelian group.

  The bordism \(\Cor\sqcup -\Cor\bordant \emptyset\) is, in fact, a special case of Example~\ref{exa:disk_bundle_bordism} where \(\VB\defeq\Midd\times\R\) is the constant vector bundle of rank~\(1\); hence the disk bundle~\(\Disk\VB\) is simply \(\Midd\times[0,1]\) and the unit sphere bundle is \(\Midd\sqcup\Midd\).  The sign comes from the orientation-reversal on one boundary component in Lemma~\ref{lem:bordism_from_boundary}.
\end{example}

The last two examples allow us to relate the Thom modification in Definition~\ref{def:Thom_iso_correspondence} to the vector bundle modifications used in \cite{Raven:Thesis} and \cites{Baum-Douglas:K-homology,Baum-Block:Bicycles}.

Let \(\Cor\defeq (\Midd,\mapl,\mapr,\Kclass)\) be a correspondence from~\(\Source\) to~\(\Target\) and let~\(\VB\) be a subtrivial \(\Coh\)\nb-oriented \(\Grd\)\nb-vector bundle over~\(\Midd\).  Since the bundle projection \(\proj{\VB}\colon \total{\VB}\epi\Midd\) is not proper, the Thom modification makes no sense in the setting of \cites{Baum-Douglas:K-homology, Baum-Block:Bicycles, Raven:Thesis}.  Let~\(\bar\VB\) be the unit sphere bundle in \(\VB\oplus\R\).  This contains~\(\total{\VB}\) as an open subset, whose complement is homeomorphic to~\(\Midd\) via the \(\infty\)\nb-section.  Excision for~\(\Coh\) yields a canonical map
\[
\Coh^*_\Source(\total{\VB}) \cong
\Coh^*_\Source(\bar\VB,\Midd) \to
\Coh^*_\Source(\bar\VB).
\]
Let \(\bar\Thom_\VB\colon \Coh^*_\Source(\Midd)\to\Coh^*_\Source(\bar\VB)\) be its composition with the Thom isomorphism.  The projection \(\proj{\VB}\colon \total{\VB}\epi\Midd\) extends to an \(\Coh\)\nb-oriented normally non-singular map \(\bproj{\VB}\colon \bar\VB\to\Midd\).  We get a correspondence \(\bigl(\bar\VB, \mapl\circ\bproj{\VB}, \mapr\circ\bproj{\VB}, \bar\Thom_\VB(\Kclass)\bigr)\).  This is precisely the vector bundle modification used by Jeff Raven in~\cite{Raven:Thesis}.  Example~\ref{exa:bordism_extend_from_open} shows that
\[
\bigl(\VB, \mapl\circ\proj{\VB}, \mapr\circ\proj{\VB}, \Thom_\VB(\Kclass)\bigr)
\bordant
\bigl(\bar\VB, \mapl\circ\bproj{\VB}, \mapr\circ\bproj{\VB}, \bar\Thom_\VB(\Kclass)\bigr).
\]
Thus the Thom modification is bordant to Raven's vector bundle modification of~\(\Cor\).

The notion of vector bundle modification in \cites{Baum-Douglas:K-homology, Baum-Block:Bicycles} is slightly different from Raven's.  The clutching construction in~\cite{Baum-Douglas:K-homology} does not involve the full Thom class, it only uses its non-trivial half.  Recall that the Thom class \(\Thom_\VB\in\RK^*_{\Grd,\Midd}(\bar\VB,\Midd)\) in \(\K\)\nb-theory restricts to the Bott generator in \(\K^n(\Sphere^n,\pt)\) in each fibre.  Since the dimension vanishes on this relative \(\K\)\nb-group, the Thom class is a difference of two vector bundles.  One is the clutching construction of~\cite{Baum-Douglas:K-homology}, the other is pulled back from~\(\Midd\).  Leaving out this second half yields a bordant correspondence because of Example~\ref{exa:disk_bundle_bordism}, which yields a bordism
\[
\bigl(\bar\VB, \mapl\circ\bproj{\VB}, \mapr\circ\bproj{\VB}, \bproj{\VB}^*(\delta)\bigr) \bordant \emptyset
\]
for any \(\delta\in\Coh^*_\Source(\Midd)\).

This is why the two notions of vector bundle modification used by Baum and Raven are almost equivalent.  The only difference is that -- unlike Baum's -- Raven's vector bundle modification contains the direct sum--disjoint union relation when combined with bordism (see \cite{Raven:Thesis}*{Proposition 4.3.2}).

\begin{lemma}
  \label{lem:direct_sum_disjoint_union}
  Let \(\Cor_1 = (\Midd,\mapl,\mapr,\Kclass_1)\) and \(\Cor_2 = (\Midd,\mapl,\mapr,\Kclass_2)\) be two correspondences from~\(\Source\) to~\(\Target\) with the same data \((\Midd,\mapl,\mapr)\) and let \(\Cor_+ \defeq (\Midd,\mapl,\mapr,\Kclass_1 +\Kclass_2)\).  The correspondences \(\Cor_1 \sqcup \Cor_2\) and \(\Cor_+\) are equivalent.
\end{lemma}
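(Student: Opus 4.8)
The plan is to realize the equivalence $\Cor_1 \sqcup \Cor_2 \sim \Cor_+$ as a composite of a Thom modification and a bordism, exploiting the direct-sum-disjoint-union phenomenon already observed for Raven's vector bundle modification just above the statement. First I would apply a Thom modification to $\Cor_+$ along the trivial rank-one bundle $\VB_0 \defeq \Midd \times \R$ over $\Midd$. Since $\VB_0$ is trivial (hence subtrivial) and canonically $\Coh$-oriented, this is legitimate, and it produces the correspondence $\Cor_+^{\VB_0} = (\total{\VB_0}, \mapl \circ \proj{\VB_0}, \mapr \circ \proj{\VB_0}, \Thom_{\VB_0}(\Kclass_1 + \Kclass_2))$ equivalent to $\Cor_+$. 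By the discussion preceding the lemma, $\Cor_+^{\VB_0}$ is in turn bordant to Raven's modification $\bigl(\bar\VB_0, \mapl \circ \bproj{\VB_0}, \mapr \circ \bproj{\VB_0}, \bar\Thom_{\VB_0}(\Kclass_1 + \Kclass_2)\bigr)$, where $\bar\VB_0$ is the unit sphere bundle of $\VB_0 \oplus \R = \Midd \times \R^2$, i.e.\ the trivial $\Sphere^1$-bundle $\Midd \times \Sphere^1$.

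Next I would use additivity of the relative Thom class to split $\bar\Thom_{\VB_0}(\Kclass_1 + \Kclass_2) = \bar\Thom_{\VB_0}(\Kclass_1) + \bar\Thom_{\VB_0}(\Kclass_2)$ in $\Coh^*_\Source(\bar\VB_0)$; Lemma~\ref{lem:direct_sum_disjoint_union}'s own statement (applied one dimension up, or simply linearity of pull-back and cup product with the orientation class) lets me write this modified correspondence as the sum, in the sense of Definition~\ref{def:add_correspondences} applied to two copies with the same $(\bar\VB_0, \mapl \circ \bproj{\VB_0}, \mapr \circ \bproj{\VB_0})$, provided we can pass from ``same middle space, added classes'' to ``disjoint union, separate classes.'' But that is exactly what we are trying to prove, so instead I would argue directly: the sphere bundle $\bar\VB_0 \to \Midd$ is a trivial $\Sphere^1$-bundle, and the section at $\infty$ splits $\Coh^*_\Source(\bar\VB_0) \cong \Coh^*_\Source(\Midd) \oplus \Coh^*_\Source(\total{\VB_0})$; under this splitting $\bar\Thom_{\VB_0}(\Kclass_i)$ lies in the second summand, so the correspondence with class $\bar\Thom_{\VB_0}(\Kclass_1) + \bar\Thom_{\VB_0}(\Kclass_2)$ is bordant (via Example~\ref{exa:bordism_extend_from_open}, extending from the open subset $\total{\VB_0} \subseteq \bar\VB_0$) to $\Cor_1^{\VB_0} \sqcup \Cor_2^{\VB_0}$ in a suitable two-copy model. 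Finally, each $\Cor_i^{\VB_0}$ is a Thom modification of $\Cor_i$, hence equivalent to $\Cor_i$, and disjoint union of equivalent correspondences is equivalent, so $\Cor_1^{\VB_0} \sqcup \Cor_2^{\VB_0} \sim \Cor_1 \sqcup \Cor_2$.

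The main obstacle I anticipate is the middle step: making precise how ``one correspondence whose middle space is a disjoint union $\total{\VB_0} \sqcup \total{\VB_0}$ carrying $\Kclass_1$ on one copy and $\Kclass_2$ on the other'' is reached from ``one correspondence on the connected-over-$\Midd$ sphere bundle $\bar\VB_0$ with class $\bar\Thom(\Kclass_1) + \bar\Thom(\Kclass_2)$.'' The honest route, which avoids circularity, is to choose a clever bordism: take $\Bord \defeq \Disk\VB_0 \sqcup \Disk\VB_0 = (\Midd \times [0,1]) \sqcup (\Midd \times [0,1])$ with $\bd_0\Bord = \emptyset$ and $\bd_1\Bord = \Sphere\VB_0 \sqcup \Sphere\VB_0 = (\Midd \sqcup \Midd) \sqcup (\Midd \sqcup \Midd)$, carry $\Kclass_i$ on the relevant disk, and run Example~\ref{exa:disk_bundle_bordism} separately on each summand; this shows $\Cor_1 \sqcup \Cor_2 \sqcup (\text{two null pieces}) \bordant \emptyset$, which after reorganizing the null pieces against the analogous bordism for $\Cor_+$ (which has $\bd_1\Bord = \Midd \sqcup \Midd$ with class $\Kclass_1 + \Kclass_2$ on it) gives the result once we know that a single copy of $\Midd \times [0,1]$ with class $\Kclass_1 + \Kclass_2$ and one with $\Kclass_1 \sqcup \Kclass_2$ on $(\Midd\times[0,1]) \sqcup (\Midd \times [0,1])$ have bordant boundaries — which reduces, via the Mayer–Vietoris / additivity of $\Coh^*_\Source$, to the elementary fact that $\Coh^*_\Source(\Midd \sqcup \Midd) \cong \Coh^*_\Source(\Midd) \oplus \Coh^*_\Source(\Midd)$ and that the diagonal class $(\Kclass_1+\Kclass_2, 0)$ and the split class $(\Kclass_1, \Kclass_2)$ are connected by the linear bordism over $\Midd \times [0,1]$ built from Example~\ref{exa:bordism_extend_from_open}. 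I expect the bookkeeping of orientations and collars in gluing these pieces, rather than any conceptual difficulty, to be where the care is needed.
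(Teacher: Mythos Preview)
Your opening move is right and matches the paper: Thom-modify by the trivial line bundle \(\R\) so that the two sides become correspondences with middle space \(\Midd\times\R\) and \((\Midd\times\R)\sqcup(\Midd\times\R)\) respectively. But after that your argument drifts into circularity. In the first approach you correctly flag the problem yourself. In the ``honest route'' it recurs at the end: your disk-bundle pieces only give \(\Cor_i\sqcup(-\Cor_i)\bordant\emptyset\) (this is exactly Example~\ref{exa:inverse_correspondence}), and combining those with the analogous statement for \(\Cor_+\) does not produce \(\Cor_1\sqcup\Cor_2\sqcup(-\Cor_+)\bordant\emptyset\). Your final reduction to ``\((\Kclass_1+\Kclass_2,0)\) and \((\Kclass_1,\Kclass_2)\) on \(\Midd\sqcup\Midd\) are connected by a linear bordism from Example~\ref{exa:bordism_extend_from_open}'' is precisely the original statement again once you strip off the null summand: Example~\ref{exa:bordism_extend_from_open} only lets you shrink to an open subset carrying a preimage of the same class, it does not let you redistribute a sum across components.

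The paper avoids all of this with a single explicit bordism: after Thom-modifying by~\(\R\), take
\[
\Bord\defeq [0,1]\times\R\setminus\{(0,0)\},
\qquad \bd_0\Bord=\{0\}\times(\R\setminus\{0\})\cong\R\sqcup\R,
\qquad \bd_1\Bord=\{1\}\times\R\cong\R,
\]
and use \(\Bord\times\Midd\) as the middle space. The point is purely topological: \(\Bord\) deformation retracts onto \(\bd_0\Bord\), so by excision
\[
\Coh^*_\Source(\Bord\times\Midd)\;\cong\;\Coh^*_\Source\bigl((\R\sqcup\R)\times\Midd\bigr)\;\cong\;\Coh^{*+1}_\Source(\Midd)\oplus\Coh^{*+1}_\Source(\Midd).
\]
Choose the class corresponding to \((\Thom_\R\Kclass_1,\Thom_\R\Kclass_2)\); its restriction to \(\bd_1\Bord\) is then automatically \(\Thom_\R\Kclass_1+\Thom_\R\Kclass_2=\Thom_\R(\Kclass_1+\Kclass_2)\). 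That is the whole proof. The missing idea in your plan is this pair-of-pants space, which is what actually converts ``two copies of \(\R\)'' into ``one copy of \(\R\)'' while making the cohomology classes add.
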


\begin{proof}
  We are going to construct a bordism between the Thom modifications of \(\Cor_1 \sqcup \Cor_2\) and \(\Cor_+\) along the constant \(1\)\nb-dimensional \(\Grd\)\nb-vector bundle~\(\R\).

  Let \(\Bord\defeq [0,1]\times\R\setminus\{(0,0)\}\) and \(\bd_j\Bord \defeq \Bord\cap \{j\}\times\R\).  Thus \(\bd_0\Bord \cong \R\sqcup\R\) and \(\bd_1\Bord \cong\R\).  The bordism we seek is of the form \((\Bord\times\Midd,\mapl\circ\pi_2,\mapr',\Kclass)\) for a certain \(\Kclass\in\Coh^*_{\Source}(\Bord\times\Midd)\).  Here \(\pi_2\colon \Bord\times\Midd\to\Midd\) is the canonical projection, and~\(\mapr'\) is the exterior product the open embedding \(\Bord \opem [0,1]\times\R\) with~\(\mapr\).  Excision shows that restriction to~\(\bd_0\Bord\) induces an isomorphism
  \[
  \Coh^*_\Source(\Bord\times\Midd)
  \cong \Coh^*_\Source\bigl((\R\sqcup\R)\times\Midd\bigr)
  \cong \Coh^{*+1}_\Source(\Midd)\oplus \Coh^{*+1}_\Source(\Midd).
  \]
  Hence there is a unique \(\Kclass\in\Coh^*_\Source(\Bord\times\Midd)\) whose restriction to~\(\bd_0\Bord\) is \(\Kclass_1\sqcup\Kclass_2\) and whose restriction to~\(\bd_1\Bord\) is \(\Kclass_1+\Kclass_2\).  This provides the desired bordism between the Thom modifications of \(\Cor_1 \sqcup \Cor_2\) and \(\Cor_+\) along~\(\R\).
\end{proof}

\subsection{Special correspondences}
\label{sec:special_correspondences}

We use Thom modifications to bring correspondences into a standard form.  This greatly simplifies the definition of \(\Bic^*(\Source,\Target)\) and is needed for the composition product.

\begin{definition}
  \label{def:special_correspondence}
  A correspondence \((\Midd,\mapl,\mapr,\Kclass)\) or a bordism \((\Bord,\mapl,\mapr,\Kclass)\) is called \emph{special} if~\(\mapr\) is a special normally non-singular submersion.
\end{definition}

\begin{example}
  \label{ex:special_map}
  The correspondence~\(\mapl^*\) for a proper \(\Grd\)\nb-map \(\mapl\colon \Midd \to \Source\) described in Example~\ref{exa:pullback_correspondence} is special.
\end{example}

Recall that a special normally non-singular submersion from~\(\Source\) to~\(\Target\) is a normally non-singular map of the form \((\Source,\hat{\mapr},\Triv)\), where~\(\hat{\mapr}\) identifies~\(\Source\) with an open subset of~\(\total{\Triv^\Target}\).  In a special correspondence, we may replace~\(\Midd\) by this subset of~\(\total{\Triv^\Target}\), so that~\(\hat{\mapr}\) becomes the identity map.  Hence a special correspondence from~\(\Source\) to~\(\Target\) is equivalent to a quadruple \((\Triv,\Midd,\mapl,\Kclass)\), where
\begin{itemize}
\item \(\Triv\) is an \(\Coh\)\nb-oriented \(\Grd\)\nb-vector bundle over~\(\Base\);

\item \(\Midd\) is an open subset of~\(\total{\Triv^\Target}\);

\item \(\mapl\) is a \(\Grd\)\nb-map from~\(\Midd\) to~\(\Source\);

\item \(\Kclass\in\Coh^*_\Source(\Midd)\);
\end{itemize}
a special bordism from~\(\Source\) to~\(\Target\) is equivalent to a quadruple \((\Triv,\Bord,\mapl,\Kclass)\), where
\begin{itemize}
\item \(\Triv\) is an \(\Coh\)\nb-oriented \(\Grd\)\nb-vector bundle over~\(\Base\);

\item \(\Bord\) is an open subset of~\(\total{\Triv^\Target}\times[0,1]\);

\item \(\mapl\) is a \(\Grd\)\nb-map from~\(\Bord\) to~\(\Source\);

\item \(\Kclass\in\Coh^*_\Source(\Bord)\).
\end{itemize}
Observe that \(\bd_t\Bord \defeq \Bord\cap \bigl(\total{\Triv^\Target}\times\{t\}\bigr)\) for \(t=0,1\) -- viewed as open subsets of~\(\total{\Triv^\Target}\) -- automatically have the properties required in Definition~\ref{def:bordism}.  This is why a special bordism from~\(\Source\) to~\(\Target\) is nothing but a special correspondence from~\(\Source\) to \(\Target\times[0,1]\) (this is not true for general bordisms).  The boundaries of a special bordism are the special correspondences \((\Triv,\bd_t\Bord,\mapl|_{\bd_t\Bord},\Kclass)\) for \(t=0,1\).

Thom modifications of special correspondences need not be special any more, unless we modify by a trivial \(\Grd\)\nb-vector bundle (compare \cite{Emerson-Meyer:Normal_maps}*{Example 4.25}).

\begin{definition}
  \label{def:trivial_Thom}
  Let \(\Cor = (\Triv,\Midd,\mapl,\Kclass)\) be a special correspondence from~\(\Source\) to~\(\Target\) and let~\(\VB\) be an \(\Coh\)\nb-oriented \(\Grd\)\nb-vector bundle over~\(\Base\).  The \emph{Thom modification} of~\(\Cor\) by~\(\VB\) is the special correspondence
  \[
  \Cor^\VB \defeq \bigl(\Triv\oplus\VB, \Midd\times_\Base\total{\VB}, \mapl\circ\proj{\VB^\Midd}, \Thom_\VB(\Kclass)\bigr)
  \]
  from~\(\Source\) to~\(\Target\), where \(\Thom_\VB\colon \Coh^*_\Source(\Midd)\to\Coh^*_\Source(\total{\VB^\Midd}) = \Coh^*_\Source(\Midd\times_\Base \total{\VB})\) is the Thom isomorphism for the induced \(\Coh\)\nb-orientation on~\(\VB^\Midd\).
\end{definition}

\begin{theorem}
  \label{the:special_bordism}
  Any correspondence \((\Midd,\mapl,\mapr,\Kclass)\) is equivalent to a special correspondence.  Two special correspondences are equivalent if and only if they have specially bordant Thom modifications by \(\Grd\)\nb-vector bundles over~\(\Base\).
\end{theorem}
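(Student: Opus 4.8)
The plan is to construct a \emph{specialisation} operation \(\Cor\mapsto S(\Cor)\) that replaces any correspondence by an equivalent special one, and then to identify equivalence of special correspondences with the relation ``\(\Cor_0\approx\Cor_1\)'', meaning that \(\Cor_0\) and~\(\Cor_1\) admit specially bordant Thom modifications by \(\Grd\)\nb-vector bundles over~\(\Base\).

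For the first assertion, given \(\Cor=(\Midd,\mapl,\mapr,\Kclass)\) with \(\mapr=(\VB,\Triv,\hat\mapr)\), I would form the Thom modification~\(\Cor^\VB\) of Definition~\ref{def:Thom_iso_correspondence} along the subtrivial \(\Coh\)\nb-oriented bundle~\(\VB\) over~\(\Midd\).  Using the normally non-singular structure on a vector bundle projection (\cite{Emerson-Meyer:Normal_maps}*{Example 4.25}) and the composition of normally non-singular maps, the composite \(\mapr\circ\proj\VB\) becomes, up to lifting along a trivial \(\Grd\)\nb-vector bundle over~\(\Base\), the special normally non-singular submersion \(\total\VB\xopem{\hat\mapr}\total{\Triv^\Target}\epi\Target\).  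Hence~\(\Cor^\VB\) is equivalent to a special correspondence, which I call~\(S(\Cor)\); when~\(\Cor\) is already special one has \(\VB=0\) and \(S(\Cor)=\Cor\), and different choices in the construction yield isomorphic special correspondences.

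Since Thom modification by an \(\Coh\)\nb-oriented \(\Grd\)\nb-vector bundle over~\(\Base\) (Definition~\ref{def:trivial_Thom}) and special bordism are instances of the relations generating equivalence, \(\approx\) is contained in equivalence, which proves the ``if'' direction of the second assertion.  For the converse I would first check that~\(\approx\) is an equivalence relation on special correspondences: reflexivity and symmetry are clear, and transitivity follows by absorbing Thom modifications, using \((\Cor^\VB)^{\VB'}\cong\Cor^{\VB\oplus\VB'}\cong\Cor^{\VB'\oplus\VB}\), the fact that Thom modification by a bundle over~\(\Base\) carries special bordisms to special bordisms, and the fact that special bordism is an equivalence relation (as in Proposition~\ref{pro:bordism_equivalence_relation}).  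Next I would show that~\(S\) takes each elementary move between arbitrary correspondences to the relation~\(\approx\) between the associated specialisations: an isomorphism gives an isomorphism \(S(\Cor)\cong S(\Cor')\); an isotopy of~\(\hat\mapr\) gives a special bordism between the specialisations by Lemma~\ref{lem:bordism_homotopy}; a lifting of~\(\mapr\) along a trivial bundle~\(\Triv'\) over~\(\Base\) gives \(S(\Cor')\cong S(\Cor)^{\Triv'}\); a general Thom modification \(\Cor\mapsto\Cor^{\VB'}\) along a subtrivial bundle~\(\VB'\) over~\(\Midd\), with a trivialisation \(\VB'\oplus W'\cong\Triv_1^\Midd\), gives \(S(\Cor^{\VB'})\cong S(\Cor)^{\Triv_1}\) by the same composition bookkeeping as in the first part; and a bordism from~\(\bd_0\Cor\) to~\(\bd_1\Cor\) is specialised to a special bordism whose two boundaries are \(S(\bd_0\Cor)\) and~\(S(\bd_1\Cor)\).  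Applying~\(S\) along a chain of elementary moves from a special~\(\Cor_0\) to a special~\(\Cor_1\) then yields \(\Cor_0=S(\Cor_0)\approx\dots\approx S(\Cor_1)=\Cor_1\), so \(\Cor_0\approx\Cor_1\).

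I expect the specialisation of a bordism to be the main obstacle.  One has to apply the Thom modification to the whole bordism \((\Bord,\mapl,\mapr,\Kclass)\) along the bundle~\(\VB\) of its normally non-singular map, rewrite \(\mapr\circ\proj\VB\) as a (lifted) special normally non-singular submersion into \(\Target\times[0,1]\), and verify compatibility with the collar condition \(\hat\mapr^{-1}(\Target\times\{j\})=\proj\VB^{-1}(\bd_j\Bord)\), so that the restrictions of the resulting special bordism to \(\Target\times\{0\}\) and~\(\Target\times\{1\}\) are precisely \(S(\bd_0\Cor)\) and~\(S(\bd_1\Cor)\).  This point-set bookkeeping, together with the direct-sum bookkeeping needed for the transitivity of~\(\approx\) and in the Thom modification case, is routine but is where the care lies.
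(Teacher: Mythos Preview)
Your proposal is correct and follows essentially the same route as the paper's own proof: specialise a correspondence by Thom-modifying along the bundle~\(\VB\) in its normally non-singular map, observe that~\(\approx\) is an equivalence relation via \((\Cor^\VB)^{\VB'}\cong\Cor^{\VB\oplus\VB'}\), and then check case by case (isotopy, lifting, general Thom modification, bordism) that each generating move between correspondences sends the associated specialisations to \(\approx\)-related special correspondences.  The paper handles the bordism case exactly as you anticipate---Thom-modifying the bordism itself along its bundle~\(\VB\) yields a special bordism whose boundaries are the specialisations of \(\bd_0\Cor\) and \(\bd_1\Cor\)---and treats the general Thom modification case by the same subtriviality bookkeeping you sketch.
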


\begin{proof}
  Let \(\Cor= (\Midd,\mapl,\mapr,\Kclass)\) be a correspondence from~\(\Source\) to~\(\Target\) and let \(\mapr= (\VB,\Triv,\hat{\mapr})\), where~\(\VB\) is a subtrivial \(\Coh\)\nb-oriented \(\Grd\)\nb-vector bundle over~\(\Midd\), \(\Triv\) is an \(\Coh\)\nb-oriented \(\Grd\)\nb-vector bundle over~\(\Base\), and~\(\hat{\mapr}\) is an open embedding from~\(\total{\VB}\) into~\(\total{\Triv^\Target}\).  The Thom modification of~\(\Cor\) along~\(\VB\) is a correspondence that involves the composite normally non-singular map \(\total{\VB}\epi\Midd\xrightarrow{\mapr}\Target\), which is equivalent to the special normally non-singular submersion \((\hat{\mapr},\Triv)\) (see \cite{Emerson-Meyer:Normal_maps}*{Example 4.24}).  This yields a special correspondence equivalent to~\(\Cor\).

  We may do the same to a bordism \(\Cor= (\Bord,\mapl,\mapr,\Kclass)\); write \(\mapr=(\VB,\Triv,\hat{\mapr})\), then the Thom modification of~\(\Bord\) along~\(\VB\) is a special bordism, whose boundaries are the Thom modifications of the boundaries \(\bd_0\Cor\) and~\(\bd_1\Cor\) of~\(\Cor\) along the restrictions of~\(\VB\) to \(\bd_0\Bord\) and~\(\bd_1\Bord\), respectively.

  For special correspondences \(\Cor_1\) and~\(\Cor_2\), we write \(\Cor_1\sbordant \Cor_2\) if there is a special bordism between \(\Cor_1\) and~\(\Cor_2\), and \(\Cor_1\sequiv \Cor_2\) if there are \(\Grd\)\nb-vector bundles \(\VB_1\) and~\(\VB_2\) over~\(\Base\) with \(\Cor^{\VB_1} \sbordant \Cor^{\VB_2}\).  An argument as in the proof of Proposition~\ref{pro:bordism_equivalence_relation} shows that~\(\sbordant\) is an equivalence relation.  We claim that~\(\sequiv\) is an equivalence relation as well.  This follows as in the proof of \cite{Emerson-Meyer:Normal_maps}*{Lemma 4.16} using the following observation: if \(\VB_1\) and~\(\VB_2\) are two \(\Grd\)\nb-vector bundles over~\(\Base\), then the Thom modification along~\(\VB_1\) followed by the Thom modification along~\(\VB_2\) yields the Thom modification along \(\VB_1\oplus\VB_2\).

  It is clear that two special correspondences are equivalent if \(\Cor_1\sequiv\Cor_2\) because Thom modification by \(\Grd\)\nb-vector bundles over~\(\Base\) and special bordism are contained in the relations that generate the equivalence of correspondences.  To show that the two relations are equal, we must check the following.  Let \(\Cor_1\) and~\(\Cor_2\) be correspondences and let \(\Cor_1'\) and~\(\Cor_2'\) be the associated special correspondences as above.  If \(\Cor_1\) and~\(\Cor_2\) are related by an equivalence of normally non-singular maps, a bordism, or a Thom modification, then \(\Cor_1'\sequiv \Cor_2'\).  We may further split up equivalence of normally non-singular maps into isotopy and lifting of normally non-singular maps and only have to consider these two special cases.  Let \(\Cor_1 = (\Midd,\mapl,\mapr,\Kclass)\) with \(\mapr=(\VB,\Triv,\hat{\mapr})\).

  We have already observed above that bordism is contained in~\(\sequiv\).  This also covers isotopy of normally non-singular maps, which is a special case of bordism by Lemma~\ref{lem:bordism_homotopy}.  Now suppose that \(\Cor_2= (\Midd,\mapl,\mapr^{\Triv_2},\Kclass)\), where \(\mapr^{\Triv_2} = (\VB\oplus \Triv_2^\Midd, \hat{\mapr}^{\Triv_2},\Triv\oplus\Triv_2)\) is the lifting of~\(\mapr\) along a \(\Grd\)\nb-vector bundle~\(\Triv_2\) over~\(\Base\).  Then \(\Cor_2'\) is the Thom modification of~\(\Cor_1'\) along~\(\Triv_2\), so that \(\Cor_1' \sequiv \Cor_2'\).

  Finally, let~\(\Cor_2\) be the Thom modification of~\(\Cor_1\) along some subtrivial \(\Grd\)\nb-vector bundle~\(\VB_2\) over~\(\Midd\), that is, \(\Cor_2= \bigl(\VB_2,\mapl\circ\proj{\VB_2}, \mapr\circ\proj{\VB_2}, \Thom_{\VB_2}(\Kclass)\bigr)\).  Let \(\VB_2^\bot\) and~\(\Triv_2\) be \(\Grd\)\nb-vector bundles over \(\Midd\) and~\(\Base\) with \(\VB_2\oplus\VB_2^\bot \cong \Triv_2^\Midd\), let \(\iota\colon \total{\VB_2}\oplus\VB_2^\bot\to\Triv_2^\Midd\) be the isomorphism.  Then
  \[
  \mapr\circ\proj{\VB_2} =
  \bigl(\proj{\VB_2}^*(\VB_2^\bot\oplus \VB),
  (\Id_{\Triv_2}\times_\Base\hat{\mapr})\circ (\iota\times_\Midd\Id_{\total{\VB}}),
  \Triv_2\oplus\Triv\bigr);
  \]
  here we use that \(\iota\times_\Midd\Id_{\total{\VB}}\) identifies the total space of \(\proj{\VB_2}^*(\VB_2^\bot \oplus \VB)\) with
  \[
  \total{\proj{\VB_2}^*(\VB_2^\bot\oplus\VB)}
  \cong \total{\VB_2}\times_\Midd \total{\VB_2^\bot}\times_\Midd \total{\VB}
  \cong \total{\Triv_2^\Midd} \times_\Midd\total{\VB} \cong \Triv_2 \times_\Base \total{\VB}.
  \]
  Now it is routine to check that~\(\Cor_2'\) is isomorphic to the Thom modification of~\(\Cor_1'\) along~\(\Triv_2\).
\end{proof}

\begin{theorem}
  \label{the:geometric_K}
  There is a natural isomorphism
  \[
  \Bic^*(\Base,\Target) \cong \Coh_\Base^*(\Target)
  \qquad \text{for all \(\Grd\)\nb-spaces~\(\Target\).}
  \]
\end{theorem}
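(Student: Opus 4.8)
The plan is to construct mutually inverse graded group homomorphisms
\[
\sigma\colon \Coh_\Base^*(\Target)\to\Bic^*(\Base,\Target),
\qquad
\Phi\colon \Bic^*(\Base,\Target)\to\Coh_\Base^*(\Target).
\]
A \(\Grd\)\nb-map from a \(\Grd\)\nb-space to the object space~\(\Base\) is automatically the anchor map, so a correspondence from~\(\Base\) to~\(\Target\) is just a triple \((\Midd,\mapr,\Kclass)\) with~\(\mapr\) an \(\Coh\)\nb-oriented normally non-singular map from~\(\Midd\) to~\(\Target\) and~\(\Kclass\) in \(\Coh_\Base^*(\Midd)\), the cohomology of~\(\Midd\) with \(\Base\)\nb-compact support (this is the group \(\Coh^*_\Source(\Midd)\) of Definition~\ref{def:correspondence} because here \(\Source=\Base\)).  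I would set
\[
\sigma(\Kclass)\defeq\bigl[(\Target,\anchor,\Id_\Target,\Kclass)\bigr],
\qquad
\Phi\bigl[(\Midd,\mapr,\Kclass)\bigr]\defeq\mapr!(\Kclass),
\]
where \(\mapr!\colon\Coh_\Base^*(\Midd)\to\Coh_\Base^*(\Target)\) is the topological wrong\nb-way map of~\(\mapr\) from \cite{Emerson-Meyer:Normal_maps}.  Then~\(\sigma\) is additive by Lemma~\ref{lem:direct_sum_disjoint_union}, \(\Phi\) is additive because wrong\nb-way maps are additive for disjoint unions, and \(\Phi\circ\sigma=\Id\) because the wrong\nb-way map of an identity normally non-singular map is the identity.

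The first point to check is that~\(\Phi\) is constant on equivalence classes.  Invariance under equivalence of normally non-singular maps is exactly the statement, recalled in the excerpt, that \(\mapr\mapsto\mapr!\) depends only on the equivalence class of~\(\mapr\).  For a Thom modification along a subtrivial \(\Coh\)\nb-oriented \(\Grd\)\nb-vector bundle~\(\VB\) over~\(\Midd\) one uses the identities \((\mapr\circ\proj{\VB})!=\mapr!\circ\proj{\VB}!\) and \(\proj{\VB}!\circ\Thom_\VB=\Id\) from \cite{Emerson-Meyer:Normal_maps}, which give \(\Phi\bigl[(\total{\VB},\anchor,\mapr\circ\proj{\VB},\Thom_\VB(\Kclass))\bigr]=\mapr!(\Kclass)\).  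Bordism invariance is the crucial case.  Given a bordism with normally non-singular map \((\VB,\Triv,\hat{\mapr})\) to \(\Target\times[0,1]\), I would Thom\nb-modify it along~\(\VB\) exactly as in the proof of Theorem~\ref{the:special_bordism}; the outcome is a \emph{special} bordism whose two boundaries are the Thom modifications of the original ones, so by the preceding remark it suffices to show that the two boundaries of a special bordism have equal \(\Phi\)\nb-value.  A special bordism from~\(\Base\) to~\(\Target\) amounts to an open subset \(\Bord\subseteq\total{\Triv^{\Target\times[0,1]}}=\total{\Triv^\Target}\times[0,1]\) together with \(\Kclass\in\Coh_\Base^*(\Bord)\), and the wrong\nb-way map of its defining special normally non-singular submersion is the composite of the open\nb-embedding extension map \(\Coh_\Base^*(\Bord)\to\Coh_\Base^*(\total{\Triv^\Target}\times[0,1])\) with the inverse Thom isomorphism.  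Hence, writing \(\bd_j\Bord\defeq\Bord\cap\bigl(\total{\Triv^\Target}\times\{j\}\bigr)\), the value \(\Phi[\bd_j\Bord]\) is obtained by applying the inverse Thom isomorphism over~\(\Target\) to the extension by zero of \(\Kclass|_{\bd_j\Bord}\) to \(\total{\Triv^\Target}\).  Using naturality of the Thom isomorphism and the base\nb-change identity for ``extension by zero'' along the closed embedding \(\total{\Triv^\Target}\times\{j\}\opem\total{\Triv^\Target}\times[0,1]\) (whose intersection with~\(\Bord\) is \(\bd_j\Bord\)), this class equals the restriction to \(\Target\times\{j\}\) of a single class in \(\Coh_\Base^*(\Target\times[0,1])\), namely the inverse Thom isomorphism applied to the extension by zero of~\(\Kclass\); the restrictions to \(\Target\times\{0\}\) and \(\Target\times\{1\}\) agree by homotopy invariance of~\(\Coh_\Base^*\).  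So~\(\Phi\) descends to a graded homomorphism on \(\Bic^*(\Base,\Target)\).

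It remains to prove \(\sigma\circ\Phi=\Id\), that is, that every correspondence~\(\Cor\) from~\(\Base\) to~\(\Target\) is equivalent to \(\sigma(\Phi[\Cor])\).  Since~\(\Phi\) is now known to be well defined, Theorem~\ref{the:special_bordism} lets me assume \(\Cor=(\Triv,\Midd,\anchor,\Kclass)\) is special with \(\Midd\subseteq\total{\Triv^\Target}\) open; by the computation above, \(\Phi[\Cor]=\Thom_\Triv^{-1}(\Kclass')\), where \(\Kclass'\in\Coh_\Base^*(\total{\Triv^\Target})\) is the extension of~\(\Kclass\) by zero and \(\Thom_\Triv\) is the Thom isomorphism of \(\Triv^\Target\) over~\(\Target\).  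Applying Example~\ref{exa:bordism_extend_from_open} to the open inclusion \(\Midd\opem\total{\Triv^\Target}\) shows that~\(\Cor\) is bordant to \(\bigl(\total{\Triv^\Target},\anchor,\proj{\Triv^\Target},\Kclass'\bigr)\), with \(\proj{\Triv^\Target}\) read as a special normally non-singular submersion.  But this last correspondence is precisely the Thom modification along the \(\Grd\)\nb-vector bundle~\(\Triv\) over~\(\Base\) of \(\bigl(\Target,\anchor,\Id_\Target,\Thom_\Triv^{-1}(\Kclass')\bigr)=\sigma(\Phi[\Cor])\), hence equivalent to it.  Therefore \(\Cor\equiv\sigma(\Phi[\Cor])\), and \(\sigma\) and~\(\Phi\) are mutually inverse isomorphisms.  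Naturality in~\(\Target\) is then a matter of inspection: for an \(\Coh\)\nb-oriented normally non-singular map \(g\colon\Target\to\Target'\), postcomposing the normally non-singular map of a correspondence with~\(g\) corresponds under~\(\Phi\) to composing with~\(g!\) (functoriality of wrong\nb-way maps), and the analogous compatibilities of~\(\sigma\) and of pullback along a proper \(\Grd\)\nb-map in the target variable are immediate from the definitions.

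I expect the bordism invariance of~\(\Phi\) to be the only genuinely delicate step; once Theorem~\ref{the:special_bordism}, Example~\ref{exa:bordism_extend_from_open}, the Thom isomorphism, and the basic functoriality of~\(\Coh_\Base^*\) and of wrong\nb-way maps are in hand, everything else is formal bookkeeping.
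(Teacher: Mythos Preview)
Your proof is correct and reaches the same conclusion, but by a genuinely different route than the paper's.  The paper never constructs your map~\(\Phi\) via wrong-way functoriality; instead it works entirely inside the description of \(\Bic^*(\Base,\Target)\) furnished by Theorem~\ref{the:special_bordism} as \(\sequiv\)-classes of special correspondences.  It shows directly that every special correspondence is \(\sequiv\)-equivalent to one in the normal form \((0,\Target,\anchor,\eta)\) (using Example~\ref{exa:bordism_extend_from_open} to enlarge~\(\Midd\) to all of~\(\total{\Triv^\Target}\) and then undoing the Thom modification), and that this~\(\eta\) is unique because any special bordism between normal-form correspondences can itself be extended to a ``constant'' one on \(\total{\Triv^\Target}\times[0,1]\), whence homotopy invariance of~\(\Coh_\Base^*\) forces the two endpoint classes to coincide.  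Your approach trades this uniqueness argument for the explicit inverse \(\Phi(\Cor)=\mapr!(\Kclass)\), at the cost of verifying bordism invariance of~\(\Phi\) directly --- which, as you correctly anticipate, is the one delicate step and rests on the same base-change and homotopy-invariance ingredients the paper uses implicitly.  The payoff of your route is an explicit formula for the isomorphism and a transparent naturality statement; the paper's route avoids invoking the wrong-way functor and stays closer to the internal combinatorics of special correspondences established in Theorem~\ref{the:special_bordism}.
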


\begin{proof}
  Theorem~\ref{the:special_bordism} shows that \(\Bic^*(\Base,\Target)\) is the set of \(\sequiv\)\nb-equivalence classes of special correspondences from~\(\Base\) to~\(\Target\).  Let \((\Triv,\Midd,\mapl,\Kclass)\) be a special correspondence from~\(\Base\) to~\(\Target\) as above.  The map~\(\mapl\) must be the anchor map of~\(\Midd\) by \(\Grd\)\nb-equivariance and therefore extends to \(\anchor\colon \total{\Triv^\Target}\to\Base\).  Thus Example~\ref{exa:bordism_extend_from_open} provides a special bordism
  \[
  (\Triv,\Midd,\mapl,\Kclass) \sbordant (\Triv,\total{\Triv^\Target}, \anchor, \bar\Kclass),
  \]
  where \(\bar\Kclass\in\Coh^*_\Base(\total{\Triv^\Target})\) is the image of~\(\Kclass\) under the canonical map \(\Coh^*_\Base(\Midd) \to \Coh^*_\Base(\total{\Triv^\Target})\).  Hence we may restrict attention to special correspondences with \(\Midd=\total{\Triv^\Target}\).  The same argument for special bordisms between such correspondences shows that any special bordism extends to a constant one.  Thus special correspondences with \(\Midd = \total{\Triv^\Target}\) are equivalent if and only if they become equal after a Thom modification.

  Now \((\Triv,\total{\Triv^\Target},\anchor, \bar{\Kclass})\) is the Thom modification of \(\bigl(0, \Target, \anchor, \Thom_{\Triv^\Target}^{-1}(\bar{\Kclass})\bigr)\) along~\(\Triv^\Target\).  Hence any class in \(\Bic^*(\Base,\Target)\) is represented by \((0, \Target, \anchor, \eta)\) for a unique \(\eta \in\Coh^*_\Base(\Target)\).  Thus \(\Coh^*_\Base(\Target) \cong \Bic^*(\Base,\Target)\).
\end{proof}

This result may seem rather trivial, but it is the place where many of our technical modifications of the notion of a correspondence are used.  Without the normal factorisation or without assuming the subtriviality of the vector bundle~\(\VB\) in a normally non-singular map, we could not simplify cycles for \(\Bic^*(\Base,\Target)\) as above.  Furthermore, Theorem~\ref{the:geometric_K} is the one case where we use the definition of \(\Bic^*(\Base,\Target)\) to compute the theory.  Our proof that bivariant topological and analytic \(\K\)\nb-theory are equal will use duality to reduce the general case to this special case.  The duality argument only uses formal properties of the correspondence category and some rather special correspondences which are needed to generate the duality isomorphisms (the latter require an additional, geometric hypothesis).

\subsection{Composition of correspondences}
\label{sec:compose_correspondences}

We first define the composition only for special correspondences.  Let \(\Cor_1 \defeq (\Triv_1,\Midd_1,\mapl_1,\Kclass_1)\) and \(\Cor_2 \defeq (\Triv_2,\Midd_2,\mapl_2,\Kclass_2)\) be special correspondences from~\(\Source\) to~\(\Target\) and from~\(\Target\) to~\(\Third\), respectively.  Their composition product \(\Cor_1 \inpro \Cor_2\) is a special correspondence \((\Triv,\Midd,\mapl,\Kclass)\) from~\(\Source\) to~\(\Third\).

We let \(\Triv\defeq \Triv_1\oplus\Triv_2\) and form \(\Midd\defeq \Midd_1\times_\Target\Midd_2\) using the maps \(\Midd_1\subseteq \total{\Triv_1^\Target}\epi \Target\) and \(\mapl_2\colon \Midd_2\to\Target\).  We identify~\(\Midd\) with an open subset of~\(\total{\Triv^\Third}\) as follows:
\begin{multline*}
  \Midd = \Midd_1\times_\Target \Midd_2
  \cong \bigl\{(\third,\triv_1,\triv_2)\in \Third\times_\Base \Triv_1\times_\Base \Triv_2 = \total{\Triv^\Third} \bigm|
  \\\text{\((\third,\triv_2)\in \Midd_2\) and \((\mapl_2(\third,\triv_2),\triv_1)\in \Midd_1\)} \bigr\}
\end{multline*}
We define \(\mapl\colon \Midd\to\Source\) by \(\mapl(\midd_1,\midd_2)\defeq \mapl_1(\midd_1)\) and let \(\Kclass \defeq \Kclass_1\otimes_\Target\Kclass_2 \in\Coh^0_\Source(\Midd)\) be the exterior product of \(\Kclass_1\in\Coh^0_\Source(\Midd_1)\) and \(\Kclass_2\in\Coh^0_\Target(\Midd_2)\); more precisely, \(\Kclass_1\otimes_\Target\Kclass_2 \in \Coh^0_\Source(\Midd)\) denotes the restriction of the exterior product \(\Kclass_1\times_\Base\Kclass_2\) in \(\Coh^0_{\Source\times_\Base\Target} (\Midd_1\times_\Base\Midd_2)\) to \(\Midd_1\times_\Target\Midd_2\); we may change the support condition because \(\Source\times_\Base\Target\)\nb-compact subsets of \(\Midd_1\times_\Target\Midd_2\) are \(\Source\)\nb-compact.

This yields a special correspondence \((\Triv,\Midd,\mapl,\Kclass)\), which we denote by \(\Cor_1 \inpro \Cor_2\) or \(\Cor_1 \inpro_\Target \Cor_2\) and call the \emph{composition product} of \(\Cor_1\) and~\(\Cor_2\).

Our product construction applies equally well to special bordisms, so that products of specially bordant special correspondences remain specially bordant.  The degree is additive for products, and our product commutes with Thom modifications by \(\Grd\)\nb-vector bundles over~\(\Base\) on the first or second factor.  As a result, Theorem~\ref{the:special_bordism} shows that we we get a grading-preserving, bi-additive map
\[
\inpro_\Target=\inpro\colon
\Bic^*(\Source,\Target)\times \Bic^*(\Target,\Third) \to \Bic^*(\Source,\Third).
\]

\begin{lemma}
  \label{lem:product_special_correspondence_associative}
  The product map~\(\inpro\) is associative and turns \(\Bic^*\) into a \(\Z\)\nb-graded additive category.
\end{lemma}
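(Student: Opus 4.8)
Since the product~\(\inpro\) on \(\Bic^*\) is already known to be well defined, degree-preserving, and bi-additive by the discussion preceding the lemma (which rests on Theorem~\ref{the:special_bordism}), and since each \(\Bic^*(\Source,\Target)\) is already a graded abelian group (Example~\ref{exa:inverse_correspondence}), the plan is to check the remaining axioms of a \(\Z\)\nb-graded additive category: associativity of~\(\inpro\), existence of identity morphisms, and existence of a zero object and of biproducts. Throughout I would verify each identity for honest special correspondences and then conclude the corresponding equality in \(\Bic^*\) via Theorem~\ref{the:special_bordism}; no bordisms or Thom modifications should be needed.

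\textbf{Associativity.} Let \(\Cor_i=(\Triv_i,\Midd_i,\mapl_i,\Kclass_i)\), \(i=1,2,3\), be special correspondences from~\(\Source\) to~\(\Target\), from~\(\Target\) to~\(\Third\), and from~\(\Third\) to a fourth \(\Grd\)\nb-space. Unwinding the definition of~\(\inpro\), both \((\Cor_1\inpro\Cor_2)\inpro\Cor_3\) and \(\Cor_1\inpro(\Cor_2\inpro\Cor_3)\) have \(\Grd\)\nb-vector bundle \(\Triv_1\oplus\Triv_2\oplus\Triv_3\) over~\(\Base\) (up to the canonical associativity isomorphism of~\(\oplus\)), middle space the iterated fibre product \(\Midd_1\times_\Target\Midd_2\times_\Third\Midd_3\) formed from \(\mapl_2\), \(\mapl_3\), and the maps \(\Midd_1\to\Target\), \(\Midd_2\to\Third\) coming from the ambient bundle projections, backward map \((\midd_1,\midd_2,\midd_3)\mapsto\mapl_1(\midd_1)\), and cohomology datum the iterated exterior product \(\Kclass_1\otimes_\Target\Kclass_2\otimes_\Third\Kclass_3\) restricted to that fibre product (using associativity of the exterior product and of restriction of support conditions). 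The one genuine point is that the two identifications of this middle space with an open subset of the ambient total space over the fourth \(\Grd\)\nb-space agree; this is a diagram chase through the open embeddings of the~\(\Midd_i\) into their ambient total spaces and the projections off those spaces. I expect this bookkeeping to be the main obstacle, although it is conceptually routine.

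\textbf{Identities.} The identity normally non-singular map on a \(\Grd\)\nb-space~\(\Source\) has vanishing \(\VB\) and~\(\Triv\), hence is a special normally non-singular submersion, so it yields the legitimate special correspondence \(\Id_\Source\defeq(0,\Source,\Id_\Source,1)\) (it is both the pullback correspondence of the proper identity map and its wrong-way correspondence). For \(\Cor=(\Triv,\Midd,\mapl,\Kclass)\) from~\(\Source\) to~\(\Target\) one has \(\Source\times_\Source\Midd\cong\Midd\), \(0\oplus\Triv=\Triv\), and \(1\otimes_\Source\Kclass=\Kclass\), with matching structure maps, so \(\Id_\Source\inpro\Cor=\Cor\); the computation \(\Cor\inpro\Id_\Target=\Cor\) is symmetric. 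These are again isomorphisms of special correspondences, hence identities in \(\Bic^*\).

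\textbf{Zero object and biproducts.} Bi-additivity of~\(\inpro\) together with the graded abelian group structure on the Hom-sets makes \(\Bic^*\) enriched in \(\Z\)\nb-graded abelian groups with degree-zero composition. The empty \(\Grd\)\nb-space is a zero object, since any correspondence into or out of~\(\emptyset\) has empty middle space, whence \(\Bic^*(\emptyset,\Target)=\Bic^*(\Source,\emptyset)=0\). For \(\Grd\)\nb-spaces \(\Source_1,\Source_2\) I would exhibit \(\Source_1\sqcup\Source_2\) as a biproduct using, for \(j=1,2\), the wrong-way correspondence~\(i_j\) of the open embedding \(\Source_j\opem\Source_1\sqcup\Source_2\) (Example~\ref{exa:wrong-way_correspondence}) and the pullback correspondence~\(p_j\) of the same inclusion, which is both closed and open, hence proper (Example~\ref{exa:pullback_correspondence}). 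Unwinding fibre products exactly as in the identity computation gives \(i_k\inpro p_j=\delta_{jk}\,\Id_{\Source_j}\) -- the fibre product \(\Source_k\times_{\Source_1\sqcup\Source_2}\Source_j\) being empty for \(j\ne k\) and the diagonal for \(j=k\) -- and \(p_1\inpro i_1+p_2\inpro i_2=\Id_{\Source_1\sqcup\Source_2}\), since the disjoint union over~\(j\) of the special correspondences \((0,\Source_j,\iota_j,1)\) is literally \((0,\Source_1\sqcup\Source_2,\Id_{\Source_1\sqcup\Source_2},1)\). As \(\Bic^*\) is enriched in abelian groups, these four identities exhibit \(\Source_1\sqcup\Source_2\) as a biproduct, which together with the previous steps shows that \(\Bic^*\) is a \(\Z\)\nb-graded additive category.
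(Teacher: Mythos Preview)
Your proposal is correct and follows essentially the same approach as the paper's own proof, which simply declares associativity ``routine to check,'' notes the identity correspondences, and asserts that the empty \(\Grd\)\nb-space is a zero object and \(\Target_1\sqcup\Target_2\) is both a product and a coproduct. You have spelled out the details the paper omits---in particular the explicit biproduct maps \(i_j\) and \(p_j\) and the verification of the biproduct identities via fibre products of special correspondences---but the underlying strategy is identical.
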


\begin{proof}
  The associativity of~\(\inpro\) is routine to check.  We get a category because we also have identity correspondences.  The morphism spaces are \(\Z\)\nb-graded Abelian groups.  For an additive category, we also need products and a zero object.  It is easy to see that the empty \(\Grd\)\nb-space is a zero object and that \(\Target_1\sqcup \Target_2\) is both a coproduct and a product of \(\Target_1\) and~\(\Target_2\) in the category~\(\Bic^*\).
\end{proof}

The \emph{exterior product} of two correspondences is defined by applying~\(\times_\Base\) to all ingredients.  Exterior products of special correspondences remain special.

\begin{theorem}
  \label{the:compose_correspondence}
  With the composition product and exterior product defined above, \(\Bic^*\) becomes a \(\Z\)\nb-graded symmetric monoidal additive category; the unit object is~\(\Base\).
\end{theorem}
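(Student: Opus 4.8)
The plan is to verify the axioms of a $\Z$-graded symmetric monoidal additive category in the usual order, reducing everything to special correspondences via Theorem~\ref{the:special_bordism} so that the constructions of $\inpro$ and $\times_\Base$ from the preceding paragraphs literally apply. By Lemma~\ref{lem:product_special_correspondence_associative} we already have a $\Z$-graded additive category, so what remains is to endow $\Bic^*$ with a symmetric monoidal structure compatible with the additive structure. First I would check that the exterior product $\times_\Base$ is well-defined on equivalence classes: since applying $\times_\Base$ to all the data of a special correspondence (the $\Grd$-vector bundle $\Triv$ over $\Base$, the open subset $\Midd\subseteq\total{\Triv^\Target}$, the map $\mapl$, and the class $\Kclass$) again produces a special correspondence, and since $\times_\Base$ commutes with special bordisms and with Thom modifications by $\Grd$-vector bundles over $\Base$ (one takes the external direct sum of the two bundles being modified by), Theorem~\ref{the:special_bordism} shows $\times_\Base$ descends to a bi-additive, grading-additive functor $\Bic^*\times\Bic^*\to\Bic^*$. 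Bifunctoriality with respect to $\inpro$ -- i.e.\ $(\Cor_1\inpro\Cor_1')\times_\Base(\Cor_2\inpro\Cor_2') \cong (\Cor_1\times_\Base\Cor_2)\inpro(\Cor_1'\times_\Base\Cor_2')$ -- is a direct computation: both sides are built from fibre products of the $\Midd$'s and external products of the $\Kclass$'s, and the two ways of forming them agree up to the obvious reshuffling homeomorphism of the ambient vector bundle $\Triv_1\oplus\Triv_1'\oplus\Triv_2\oplus\Triv_2'$.

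Next I would produce the coherence isomorphisms. The associativity constraint $a_{\Target_1,\Target_2,\Target_3}\colon (\Target_1\times_\Base\Target_2)\times_\Base\Target_3 \xrightarrow{\cong} \Target_1\times_\Base(\Target_2\times_\Base\Target_3)$ is the canonical homeomorphism of $\Grd$-spaces, regarded as a correspondence via Example~\ref{exa:pullback_correspondence} (it is a proper, indeed invertible, $\Grd$-map); the left and right unit constraints $\Base\times_\Base\Target\cong\Target\cong\Target\times_\Base\Base$ are likewise the obvious homeomorphisms viewed as correspondences. Each of these is invertible in $\Bic^*$, its inverse being the correspondence of the inverse homeomorphism, since the composition product of $\mapl^*$ and $(\mapl^{-1})^*$ for a homeomorphism $\mapl$ is the identity correspondence by the definition of $\inpro$ on special correspondences. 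Naturality of $a$ and of the unitors with respect to morphisms in $\Bic^*$ follows from the fact that pullback correspondences along homeomorphisms commute past arbitrary correspondences up to the evident rebundling, which again reduces to a bookkeeping check on special correspondences. The pentagon and triangle axioms then hold because all the constraints are induced by genuine homeomorphisms of spaces (and identity bundle data), so the diagrams commute already at the level of homeomorphisms.

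For the symmetry I would use the flip homeomorphism $\flip\colon \Target_1\times_\Base\Target_2\xrightarrow{\cong}\Target_2\times_\Base\Target_1$, again packaged as a (special, invertible) correspondence. Its self-inverseness $\flip\inpro\flip = \Id$ is immediate, naturality is the same kind of computation as for $a$, and the hexagon axioms commute because they involve only homeomorphisms and the canonical identifications of external direct sums of bundles over $\Base$. One subtlety worth spelling out: $\Bic^*$ is $\Z$-graded, and $\Coh$ is merely assumed multiplicative, so there is no Koszul-type sign in the symmetry unless $\Coh$ forces one; the flip correspondence carries unit $\K$-theory datum and contributes no sign, and the signs internal to $\Coh^*_\Source$ are handled by the definition of the external product used in $\inpro$ and $\times_\Base$, so the symmetry is strict in the sense that $\flip$ is literally the flip. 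Finally, the unit object is $\Base$ because $\Bic^*(\Base,\Target)\cong\Coh^*_\Base(\Target)$ by Theorem~\ref{the:geometric_K} behaves as the monoidal unit for $\times_\Base$, and $\Base\times_\Base\Target = \Target$ on the nose.

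The main obstacle is not any single deep point but the need to check naturality of the three constraints and the pentagon/triangle/hexagon coherences \emph{after} passing to equivalence classes: a priori one must know that all the relevant homeomorphism-correspondences can be moved past general correspondences, not just special ones, and that the bordisms and Thom modifications used to bring a correspondence into special form are transported compatibly. I expect this to be dispatched uniformly by the observation that, for any homeomorphism $\phi$ of $\Grd$-spaces over $\Base$, the correspondence $\phi^*$ is invertible in $\Bic^*$ and conjugation by it on morphism groups is computed directly from the fibre-product formula for $\inpro$; granting this, every coherence diagram in sight reduces to an identity of homeomorphisms together with the evident associativity and commutativity of $\oplus$ for $\Grd$-vector bundles over $\Base$, which is routine. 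Hence the proof is a verification rather than a discovery, and I would present it as such, emphasizing only the flip/unit bookkeeping and referring to Theorem~\ref{the:special_bordism} and Lemma~\ref{lem:product_special_correspondence_associative} for the reductions.
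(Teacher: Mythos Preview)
Your proposal is correct and follows essentially the same route as the paper's own proof: reduce to special correspondences via Theorem~\ref{the:special_bordism}, observe that the associativity, unit, and symmetry constraints are induced by genuine \(\Grd\)\nb-equivariant homeomorphisms (packaged as correspondences through Example~\ref{exa:pullback_correspondence}), and conclude that all coherence diagrams commute already at the level of spaces. The paper compresses this into three sentences, pointing to \cite{Emerson-Meyer:Normal_maps}*{Proposition 4.26} as the analogous verification for normally non-singular maps and noting that naturality of the constraints with respect to ordinary maps and normally non-singular maps (Lemma~\ref{lem:compose_proper}, Corollary~\ref{cor:compose_normal_cor}) immediately yields naturality with respect to correspondences via the factorisation \((\mapl,\Kclass)^*\inpro\mapr!\); your more explicit unpacking of well-definedness, bifunctoriality, and each coherence axiom is the content hidden behind the word ``routine'' there.
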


\begin{proof}
  This is an analogue of \cite{Emerson-Meyer:Normal_maps}*{Proposition 4.26}, which is just as trivial to prove.  It is routine to check that the exterior product is functorial for intersection products of special correspondences.  It is associative and commutative and has unit object~\(\Base\), up to certain natural homeomorphisms; these are natural with respect to ordinary maps and normally non-singular maps and hence natural with respect to correspondences.  Thus~\(\Bic^*\) is a symmetric monoidal category (see~\cite{Saavedra:Tannakiennes}).
\end{proof}

Recall that a proper \(\Grd\)\nb-map \(\mapl\colon \Target\to\Source\) yields a correspondence~\(\mapl^*\) from~\(\Source\) to~\(\Target\) by Example~\ref{exa:pullback_correspondence}.

\begin{lemma}
  \label{lem:compose_proper}
  The map \(\mapl\mapsto\mapl^*\) is a contravariant, symmetric monoidal functor from the category of \(\Grd\)\nb-spaces with proper \(\Grd\)\nb-maps to the category~\(\Bic^0\) of maps in~\(\Bic^*\) of degree~\(0\).
\end{lemma}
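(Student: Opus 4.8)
The plan is to verify directly that $\mapl\mapsto\mapl^*$ respects identities, composition, and exterior products, using the concrete description of the composition product for special correspondences given above. First I would check well-definedness: the correspondence $\mapl^* = (\Target,\mapl,\Id_\Target,1)$ is special by Example~\ref{ex:special_map}, and it has degree~$0$ since $\Id_\Target$ is a special normally non-singular submersion with trivial bundle $\Triv=0$ and $1\in\Coh^0$. That identities go to identities, $(\Id_\Source)^* = \Id_{\mapl^*}$, is immediate from the definition of the identity correspondence in Lemma~\ref{lem:product_special_correspondence_associative}.

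The main point is functoriality: given proper $\Grd$-maps $\Third\xrightarrow{\mapr}\Target\xrightarrow{\mapl}\Source$, I must show $(\mapl\circ\mapr)^* = \mapr^*\inpro\mapl^*$ in $\Bic^0(\Source,\Third)$ (note the contravariance swaps the order). I would compute the right-hand side from the explicit formula for $\inpro$: here both factors are special correspondences with $\Triv_1=\Triv_2=0$, so $\Triv = 0\oplus 0 = 0$, the middle space is $\Midd = \Target\times_\Target\Third \cong \Third$ via the projections, the class is $1\otimes_\Target 1 = 1$, and the left leg is $\third\mapsto \mapl(\mapr(\third)) = (\mapl\circ\mapr)(\third)$. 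This is precisely $(\mapl\circ\mapr)^*$ up to the canonical homeomorphism $\Target\times_\Target\Third\cong\Third$, which is an isomorphism of correspondences. One should note that $\mapl\circ\mapr$ is proper since a composite of proper maps between Hausdorff spaces is proper, so the right-hand side lands in the image of the functor.

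For the symmetric monoidal structure, I would observe that the exterior product on $\Bic^*$ is defined by applying $\times_\Base$ to all data (space, legs, normally non-singular map, cohomology class), and $(\mapl_1\times_\Base\mapl_2)^* = \mapl_1^*\boxtimes\mapl_2^*$ follows because $\Id_{\Target_1}\times_\Base\Id_{\Target_2} = \Id_{\Target_1\times_\Base\Target_2}$ as normally non-singular maps and $1\times_\Base 1 = 1$; compatibility with the associativity, unit, and braiding constraints reduces to the statement that these natural homeomorphisms are natural with respect to ordinary $\Grd$-maps, which is part of Theorem~\ref{the:compose_correspondence}. I expect no serious obstacle here: the only thing to be careful about is matching the bookkeeping in the definition of $\inpro$ (the identifications of $\Midd$ with a subset of $\total{\Triv^\Third}$ and of the exterior product of classes) against the trivial data carried by the $\mapl^*$, and checking that the canonical homeomorphisms used are genuine isomorphisms of correspondences rather than mere homeomorphisms of underlying spaces — which they are, since they intertwine all the structure maps. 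This is exactly the kind of routine verification the lemma's phrasing anticipates.
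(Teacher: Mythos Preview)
Your approach is exactly the paper's: unwind the explicit formula for the composition of special correspondences and observe that everything collapses because \(\Triv_1=\Triv_2=0\). The only slip is the order of the product. In this paper \(\inpro\) is written in diagrammatic order (if \(\Cor_1\colon \Source\to\Target\) and \(\Cor_2\colon \Target\to\Third\) then \(\Cor_1\inpro\Cor_2\colon \Source\to\Third\)), so contravariance does \emph{not} swap it: with \(\mapl\colon\Target\to\Source\) and \(\mapr\colon\Third\to\Target\) one has \(\mapl^*\colon\Source\to\Target\), \(\mapr^*\colon\Target\to\Third\), and the correct identity is \(\mapl^*\inpro\mapr^* = (\mapl\circ\mapr)^*\). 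Your expression \(\mapr^*\inpro\mapl^*\) does not even typecheck in \(\Bic^0(\Source,\Third)\). Fortunately your actual computation (\(\Midd=\Target\times_\Target\Third\cong\Third\), left leg \(\mapl\circ\mapr\)) is the computation of \(\mapl^*\inpro\mapr^*\), so only the displayed equation needs correcting.
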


\begin{proof}
  Functoriality means that \(\mapl_1^* \inpro \mapl_2^* = (\mapl_1\circ\mapl_2)^*\) for proper \(\Grd\)\nb-maps \(\mapl_1\colon \Target\to\Source\) and \(\mapl_2\colon \Third\to\Target\).  Being symmetric monoidal means that if \(\mapl_1\colon \Target_1\to\Source_1\) and \(\mapl_2\colon \Target_2\to\Source_2\) are proper \(\Grd\)\nb-maps, then \(\mapl_1^*\times_\Base \mapl_2^* = (\mapl_1\times_\Base\mapl_2)^*\).  Both statements are obvious because all correspondences involved are special.
\end{proof}

More generally, consider pairs \((\mapl,\Kclass)\) where \(\mapl\colon \Target\to\Source\) is a \(\Grd\)\nb-map and \(\Kclass\in\Coh^*_\Source(\Target)\).  This becomes a special correspondence with \(\mapr=\Id\); denote this correspondence by \((\mapl,\Kclass)^*\).  The composition of such correspondences involves composing the maps~\(\mapl\) and taking an exterior product of the cohomology classes.  More precisely, let \(\mapl_1\colon \Target\to\Source\), \(\mapl_2\colon \Third\to\Target\), \(\Kclass_1\in\Coh^*_\Source(\Target)\), \(\Kclass_2\in\Coh^*_\Target(\Third)\), then
\[
(\mapl_1,\Kclass_1)^* \inpro (\mapl_2,\Kclass_2)^*
= (\mapl_1\circ\mapl_2, \Kclass_1\otimes_\Target\Kclass_2)^*,
\]
where \(\Kclass_1\otimes_\Target\Kclass_2 \in \Coh^*_\Source(\Third)\) denotes the restriction of the exterior product \(\Kclass_1\times_\Base\Kclass_2\) in \(\Coh^*_{\Source\times_\Base\Target} (\Target\times_\Base\Third)\) to the graph of \(\mapl_2\colon \Third\to \Target\).

A normally non-singular \(\Grd\)\nb-map \(\mapr\colon \Source\to\Target\) yields a correspondence~\(\mapr!\) from~\(\Source\) to~\(\Target\) by Example~\ref{exa:wrong-way_correspondence}.  We claim that this is a grading-preserving, symmetric monoidal functor from the category of normally non-singular maps to the category of correspondences, that is, it is compatible with products and exterior products.  Compatibility with exterior products and degrees is trivial, and compatibility with products is trivial for special normally non-singular submersions.  To prove functoriality for all normally non-singular maps, we need to know when products of non-special correspondences are given by an intersection product recipe.  This requires a notion of transversality.

Let \(\Cor_1 \defeq (\Midd_1,\mapl_1,\mapr_1,\Kclass_1)\) and \(\Cor_2\defeq (\Midd_2,\mapl_2,\mapr_2,\Kclass_2)\) be correspondences from~\(\Source\) to~\(\Target\) and from~\(\Target\) to~\(\Third\).  Write \(\mapr_1 = (\VB_1,\Triv_1,\hat{\mapr}_1)\) and \(\mapr_2 = (\VB_2,\Triv_2,\hat{\mapr}_2)\).  Let
\begin{align*}
  \Midd&\defeq \Midd_1\times_\Target\Midd_2
  = \{(\midd_1,\midd_2)\in\Midd_1\times\Midd_2\mid
  \mapr_1(\midd_1) = \mapl_2(\midd_2)\},\\
  \mapl&\colon \Midd\to\Triv,\qquad
  (\midd_1,\midd_2)\mapsto\mapl_1(\midd_1),\\
  \Kclass&\defeq \Kclass_1\otimes_\Target\Kclass_2
  \qquad \text{in \(\Coh^0_\Source(\Midd)\);}
\end{align*}
here~\(\mapr_1\) also denotes the trace of the normally non-singular map~\(\mapr_1\).  To get a correspondence from~\(\Source\) to~\(\Other\), we need a normally non-singular map \(\mapr=(\Triv,\hat{\mapr},\VB)\) from~\(\Midd\) to~\(\Third\); its trace should be the product of the coordinate projection \(\Midd\to\Midd_2\) with the trace of~\(\mapr_2\).  We put
\[
\Triv \defeq \Triv_1\oplus\Triv_2,
\qquad
\VB \defeq \pr_1^*(\VB_1)\oplus\pr_2^*(\VB_2)
\]
with the induced \(\Coh\)\nb-orientations.  In general, there need not be an open embedding \(\total{\VB}\opem\total{\Triv^\Third}\): this is where we need transversality.

\begin{definition}
  \label{def:transverse}
  Let \(\Midd_1\), \(\Midd_2\), and~\(\Target\) be \(\Grd\)\nb-spaces, let \(\mapr= (\VB_1,\Triv_1,\hat{\mapr}_1)\) be a normally non-singular \(\Grd\)\nb-map from~\(\Midd_1\) to~\(\Target\), and let \(\mapl_2\colon \Midd_2\to\Target\) be a \(\Grd\)\nb-map.  View~\(\total{\VB_1}\) as a space over~\(\Target\) via \(\total{\VB_1}\subseteq\total{\Triv_1^\Target}\epi\Target\).  Let \(\VB_1^\Midd\) be the pull-back of~\(\VB_1\) to~\(\Midd \defeq \Midd_1\times_\Target \Midd_2\) along the canonical projection \(\pr_1\colon \Midd\to \Midd_1\).

  We call~\(\mapr_1\) \emph{transverse} to~\(\mapl_2\) if the map
  \[
  \zers{\VB_1}\times_\Target\Id_{\Midd_2}\colon
  \Midd\defeq \Midd_1\times_\Target\Midd_2 \to
  \total{\VB_1}\times_\Target\Midd_2
  \]
  extends to an open embedding from~\(\total{\VB_1^\Midd}\) into \(\total{\VB_1}\times_\Target\Midd_2\).

  Two correspondences \(\Cor_1\) and~\(\Cor_2\) as above are \emph{transverse} if~\(\mapr_1\) is transverse to~\(\mapl_2\).
\end{definition}

More precisely, a transverse pair of maps is a triple consisting of a map~\(\mapl_2\), a normally non-singular map~\(\mapr_1 = (\VB_1, \Triv_1, \hat{\mapr}_1)\), and an open embedding from~\(\total{\VB_1^\Midd}\) into \(\total{\VB_1}\times_\Target\Midd_2\).  A transverse pair of correspondences is a similar triple.

The total space of~\(\VB_1^\Midd\) is
\[
\total{\VB_1^\Midd} = \bigl\{(\vb_1,\midd_2) \in
\total{\VB_1}\times\Midd_2
\bigm| \mapr_1\circ \proj{\VB_1}(\vb_1) = \mapl_2(\midd_2)
\bigr\}.
\]
This may differ drastically from
\[
\total{\VB_1}\times_\Target\Midd_2 =
\bigl\{(\vb_1,\midd_2) \in \total{\VB_1}\times \Midd_2 \bigm|
\proj{\Triv_1^\Target}\circ \hat{\mapr}_1(\vb_1)
= \mapl_2(\midd_2) \bigr\}.
\]

\begin{remark}
  \label{rem:transverse_and_normal_embedding}
  The transversality condition Definition~\ref{def:transverse} asserts that the embedding \(\Midd_1\times_\Target \Midd_2 \to \total{\VB_1}\times_\Target \Midd_2\) be a \emph{normally non-singular} embedding and that its normal bundle be \(\pr_1^*(\VB_1)\).
\end{remark}

Before we check that transversality ensures that the intersection product of two correspondences exists and represents their product, we compare it to the usual notion of transversality for smooth maps.

\begin{example}
  \label{exa:transverse_smooth}
  Let \(\Midd_1\), \(\Midd_2\), and~\(\Target\) be smooth manifolds, let \(\mapl\colon \Midd_2\to\Target\) be a smooth map, and let \(\mapr=\bigl(\VB,\R^n,\hat{\mapr}\bigr)\) be the lifting of a smooth map \(\varphi\colon \Midd_1\to\Target\) using a smooth embedding \(h\colon \Midd_1\to\R^n\).  Thus~\(\VB\) is the normal bundle of the embedding \((\varphi,h)\circ \zers{\VB}\colon \Midd_1\to \Target\times\R^n\).  Assume that the maps \(\mapl\) and~\(\varphi\) are transverse in the usual sense that
  \[
  D_{\midd_1}\varphi(\Tvert_{\midd_1}\Midd_1) +
  D_{\midd_2}\mapl(\Tvert_{\midd_2}\Midd_2)
  = \Tvert_\target\Target
  \]
  for all \(\midd_1\in\Midd_1\), \(\midd_2\in\Midd_2\) with \(\target\defeq \varphi(\midd_1) = \mapl(\midd_2)\).  Then \(\Midd_1\times_\Target\Midd_2\) is a smooth submanifold of \(\Midd_1\times\Midd_2\) and hence a smooth manifold.  Since the map \(\total{\VB}\subseteq\total{\Triv^\Target}\epi \Target\) is a submersion, \(\total{\VB}\times_\Target\Midd_2\) is a smooth manifold as well.  The map \(\zers{\VB}\times_\Target \Id_{\Midd_2}\) is a smooth embedding because it is the restriction of the smooth embedding \(\zers{\VB}\times\Id_{\Midd_2}\).  We claim that its normal bundle is the pull-back of~\(\VB\) to \(\Midd_1\times_\Target\Midd_2\).  This follows from the vector bundle isomorphisms
  \begin{align*}
    \Tvert\Midd = \Tvert (\Midd_1\times_\Target \Midd_2)
    &\cong \pr_1^*(\Tvert \Midd_1)\oplus_{(\pr_1 \circ \mapr_1)^*(\Tvert \Target)} \pr_2^*(\Tvert\Midd_2),\\
    \Tvert(\total{\VB}\times_\Target \Midd_2)
    &\cong \pr_1^*(\Tvert \total{\VB})\oplus_{\Tvert\Target} \pr_2^*(\Tvert\Midd_2)\\
    \zers{\VB}^*(\Tvert \total{\VB}) &\cong \Tvert \Midd_1\oplus \VB,
  \end{align*}
  which combine to show that the cokernel of the vector bundle map \(\Tvert (\Midd) \to \Tvert (\total{\VB}\times_\Target \Midd_2)\) is \(\pr_1^*(\VB)\).  Thus \(\zers{\VB}\times_\Target \Id_{\Midd_2}\) is a smooth normally non-singular embedding with the required normal bundle (compare Remark~\ref{rem:transverse_and_normal_embedding}) so that \(\mapl\) and~\(\varphi\) are transverse in the sense of Definition~\ref{def:transverse}.
\end{example}

Now we return to the problem of computing the product of two non-special correspondences \(\Cor_1\) and~\(\Cor_2\).  We follow our previous notation and, in particular, define \(\Midd\), \(\mapl\colon \Midd\to\Source\), \(\Kclass\in\Coh^*_\Source(\Midd)\), and the \(\Grd\)\nb-vector bundles \(\VB\) and~\(\Triv\) as above.

The total space of~\(\VB\) is \(\total{\VB_1}\times_\Target \total{\VB_2}\), which agrees with the total space of the pull-back of~\(\VB_2\) to \(\total{\VB_1}\times_\Target\Midd_2\).  Note that to form \(\total{\VB_1}\times_\Target \total{\VB_2}\) it makes no difference which map from~\(\total{\VB_1}\) to~\(\Target\) we use: \(\mapr_1\circ \pi_{\VB_1}\) or \(\pi_{\Triv^\Target}\circ \hat{\mapr}\); \cite{Emerson-Meyer:Normal_maps}*{Proposition 2.22} shows that only the homotopy class of the map \(\total{\VB_1}\to\Target\) matters.

If our correspondences are transverse, there are normally non-singular embeddings
\[
\Midd
\to \total{\VB_1}\times_\Target\Midd_2
\to \total{\VB_1}\times_\Target\total{\VB_2}
\]
with normal bundles \(\pi_1^*\VB_1\) (denoted \(\VB_1^M\) in Definition \ref{def:transverse}) and \(\pi_2^*\VB_2\), respectively; the first normally non-singular map is the transversality assumption, the second one is obvious because~\(\VB_2\) is a \(\Grd\)\nb-vector bundle over~\(\Midd_2\).  Composition yields a normally non-singular embedding \(\Midd\to\total{\VB_1}\times_\Target\total{\VB_2}\) with normal bundle~\(\VB\), that is, an open embedding from~\(\total{\VB}\) into \(\total{\VB_1}\times_\Target\total{\VB_2}\).

When we first replace our two correspondences by special ones and then take their intersection product, we replace \(\Midd_1\) by~\(\total{\VB_1}\) and \(\Midd_2\) by~\(\total{\VB_2}\) and construct a special normally non-singular submersion \(\total{\VB_1}\times_\Target\total{\VB_2} \opem \total{\Triv^\Third} \epi\Third\); we compose the open embedding \(\total{\VB_1}\times_\Target\total{\VB_2} \opem \total{\Triv^\Third}\) from this previous construction with the open embedding \(\total{\VB}\opem\total{\VB_1}\times_\Target\total{\VB_2}\) from transversality to get an open embedding \(\hat{\mapr}\colon \total{\VB}\opem\total{\Triv^\Third}\).

This produces the desired \(\Coh\)\nb-oriented normally non-singular map \((\VB,\hat{\mapr},\Triv)\) from~\(\Midd\) to~\(\Third\) and hence a correspondence from~\(\Source\) to~\(\Third\), called the \emph{intersection product} of \(\Cor_1\) and~\(\Cor_2\) and denoted \(\Cor_1 \inpro \Cor_2\) or \(\Cor_1 \inpro_\Target \Cor_2\).  This is only defined if \(\Cor_1\) and~\(\Cor_2\) are transverse and, at first sight, depends on the choice of the open embedding in the definition of transversality.

\begin{theorem}
  \label{the:compose_transverse_correspondence}
  If the correspondences \(\Cor_1\) and~\(\Cor_2\) are transverse, then their intersection product is equivalent to the composition product of the equivalent special correspondences.
\end{theorem}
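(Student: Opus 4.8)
The plan is to route both products through a single Thom modification.  Write $\mapr_i=(\VB_i,\Triv_i,\hat{\mapr}_i)$ and, following the construction of the intersection product, set $\Triv\defeq\Triv_1\oplus\Triv_2$, $\Midd\defeq\Midd_1\times_\Target\Midd_2$, $\VB\defeq\pr_1^*(\VB_1)\oplus\pr_2^*(\VB_2)$ over~$\Midd$, and $N\defeq\total{\VB_1}\times_\Target\total{\VB_2}$.  By the very construction of $\Cor_1\inpro\Cor_2$, its normally non-singular map is $(\VB,\psi\circ\chi,\Triv)$, where $\psi\colon N\opem\total{\Triv^\Third}$ is the open embedding underlying the special normally non-singular submersion of $\Cor_1^{\VB_1}\inpro\Cor_2^{\VB_2}$ and $\chi\colon\total{\VB}\opem N$ is the open embedding provided by transversality.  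Passing $\Cor_1\inpro\Cor_2$ to its Thom modification along~$\VB$ is legitimate, since Thom modification is among the relations generating equivalence of correspondences; up to the evident equivalence of normally non-singular maps, this Thom modification is the special correspondence with middle space~$\total{\VB}$, special submersion $(\psi\circ\chi,\Triv)$, left map $\mapl_1\circ\pr_1\circ\proj{\VB}$, and class $\Thom_\VB(\Kclass_1\otimes_\Target\Kclass_2)$.  Since $\Cor_i\equiv\Cor_i^{\VB_i}$ by Theorem~\ref{the:special_bordism}, it is enough to identify this special correspondence with $\Cor_1^{\VB_1}\inpro\Cor_2^{\VB_2}$ up to equivalence.

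To do this I would regard~$\total{\VB}$, via~$\chi$, as an open $\Grd$\nb-invariant subset of the middle space~$N$ of $\Cor_1^{\VB_1}\inpro\Cor_2^{\VB_2}$ and apply Example~\ref{exa:bordism_extend_from_open}.  The normally non-singular maps match on the nose: restricting the special submersion $(\psi,\Triv)$ of $\Cor_1^{\VB_1}\inpro\Cor_2^{\VB_2}$ along the open embedding~$\chi$ gives exactly $(\psi\circ\chi,\Triv)$.  The two left maps $\total{\VB}\to\Source$ agree on the zero section $\Midd\hookrightarrow\total{\VB}$ and each factors, up to a $\Grd$\nb-homotopy that scales the fibres of~$\VB$ to zero, through that zero section, so they are equivariantly homotopic and the discrepancy is absorbed by Lemma~\ref{lem:bordism_homotopy}.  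The crux is the classes: one must show
\[
\chi_!\bigl(\Thom_\VB(\Kclass_1\otimes_\Target\Kclass_2)\bigr)
= \Thom_{\VB_1}(\Kclass_1)\otimes_\Target\Thom_{\VB_2}(\Kclass_2)
\qquad\text{in }\Coh^*_\Source(N),
\]
where~$\chi_!$ is the functoriality of $\Coh^*_\Source$ for the open embedding~$\chi$.  This follows from the multiplicativity of Thom classes under direct sums together with their naturality: the Thom class of $\VB=\pr_1^*\VB_1\oplus\pr_2^*\VB_2$ over~$\Midd$ is the product of the pulled-back Thom classes of $\pr_1^*\VB_1$ and $\pr_2^*\VB_2$, and~$\chi$, built from the transversality tubular neighbourhood and the evident tubular neighbourhood of~$\Midd_2$ in~$\total{\VB_2}$, carries these onto the Thom classes of $\VB_1$ and~$\VB_2$ entering the exterior product, so that in particular the right-hand side lies in the image of~$\chi_!$.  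Here one uses that the two maps $\total{\VB_1}\to\Target$ occurring in the fibre products, namely $\mapr_1\circ\proj{\VB_1}$ and $\proj{\Triv_1^\Target}\circ\hat{\mapr}_1$, are homotopic, so that the pull-backs along them coincide (cf.\ \cite{Emerson-Meyer:Normal_maps}*{Proposition 2.22}); the multiplicativity and naturality of Thom classes are proved in \cite{Emerson-Meyer:Normal_maps}.

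Granting the class identity, the proof finishes by chaining equivalences: $\Cor_1^{\VB_1}\inpro\Cor_2^{\VB_2}$ is bordant, by Example~\ref{exa:bordism_extend_from_open}, to its restriction to the open subset $\chi(\total{\VB})$; that restriction is bordant, by Lemma~\ref{lem:bordism_homotopy}, to the Thom modification $(\Cor_1\inpro\Cor_2)^\VB$, from which it differs only by the homotopy of the left map; and $(\Cor_1\inpro\Cor_2)^\VB$ is equivalent to $\Cor_1\inpro\Cor_2$ by the definition of Thom modification.  Together with $\Cor_i\equiv\Cor_i^{\VB_i}$ this yields $\Cor_1\inpro\Cor_2\equiv\Cor_1^{\VB_1}\inpro\Cor_2^{\VB_2}$.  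I expect the only real obstacle to be the displayed class identity: the content there is to keep track of which map to~$\Target$ is used in each fibre product and to organise the multiplicativity and naturality of Thom classes into that single equation, everything else being forced by the construction of the intersection product.
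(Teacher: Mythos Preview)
Your proposal is correct and follows the same route as the paper: Thom-modify $\Cor_1\inpro\Cor_2$ along~$\VB$, identify the result via~$\chi$ with an open piece of the middle space $N=\total{\VB_1}\times_\Target\total{\VB_2}$ of $\Cor_1^{\VB_1}\inpro\Cor_2^{\VB_2}$, and invoke Example~\ref{exa:bordism_extend_from_open}. The paper's proof is terser---it simply asserts that $\mapl'$ extends~$\mapl$ and $\Kclass'$ extends~$\Kclass$ without the homotopy argument or the Thom-class computation you spell out---so your extra care about the left maps and the displayed class identity is justified but not a departure from the paper's strategy.
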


\begin{proof}
  The Thom modification of \(\Cor_1 \inpro \Cor_2\) along~\(\VB\) is a special correspondence \((\Triv, \total{\VB}, \mapl, \Kclass)\) from~\(\Source\) to~\(\Other\), where we use~\(\hat{\mapr}\) to view~\(\total{\VB}\) as an open subset of~\(\total{\Triv^\Target}\) and let~\(\mapl\) be the composition of the bundle projection \(\total{\VB}\epi\Midd\) with the map \(\Midd\to\Midd_1\to\Source\).  When we first Thom modify \(\Cor_1\) and~\(\Cor_2\) along~\(\VB_1\) and~\(\VB_2\) to make them special and then compose, we get a special correspondence \((\Triv, \VB', \mapl', \Kclass')\) with \(\VB' = \total{\VB_1}\times_\Target \total{\VB_2}\); we have seen that this contains~\(\total{\VB}\) as an open subset.  The map~\(\mapl'\) extends~\(\mapl\) on~\(\total{\VB}\), and~\(\Kclass'\) extends~\(\Kclass\).  Hence the two correspondences via \(\total{\VB}\) and~\(\VB'\) are specially bordant by Example~\ref{exa:bordism_extend_from_open}.
\end{proof}

\begin{example}
  \label{exa:compose_one_split}
  A pair of correspondences \(\Cor_1\) and~\(\Cor_2\) is transverse if~\(\Cor_1\) is special, regardless of~\(\Cor_2\), for \(\VB_1\) is the \(0\)-vector bundle in this case, making the condition in Definition \ref{def:transverse} trivially satisfied.
\end{example}

\begin{example}
  \label{exa:transverse_Mid2}
  If \(\mapl_2\colon \Midd_2\epi\Target\) is a vector bundle projection, then \(\Cor_1\) and~\(\Cor_2\) are transverse.  To see this, note that \(\Midd_1\times_\Target\Midd_2\) and \(\total{\VB_1}\times_\Target\Midd_2\) are the total spaces of the pull-backs of the vector bundle~\(\Midd_2\) to \(\Midd_1\) and to~\(\total{\VB_1}\) under the maps \(\mapr \colon \Midd_1 \to \Target\) and \(\pi_{\Triv^\Target}\circ \hat{\mapr}\colon \VB_1\to \Target\).  Now the maps \(\pi_{\Triv^\Target}\circ \hat{\mapr}\) and \(\mapr\circ \pi_{\VB_1}\) are homotopic by a homotopy which is constant on the zero section.  Hence the corresponding pull-backs of~\(M_2\) are isomorphic via an isomorphism which is the identity on the zero section, and, in particular, \(\total{\VB_1}\times_\Target\Midd_2\) is homeomorphic to~\(\total{\VB_1^M}\) via a homeomorphism which is the identity on the zero section (see \cite{Emerson-Meyer:Normal_maps}*{Proposition 2.22}).  Therefore we are left with a vector bundle \(W \defeq \mapr_1^*(\Midd_2)\) on~\(\Midd_1\).  It is obvious that the zero-section embedding \(W\to \pi_{\VB_1}^*(W)\) is normally non-singular with normal bundle \(\pi_{\VB_1}^*(W)\defeq (\mapr\circ \pi_{\VB_1})^*(M_2)\) as required.
\end{example}

\begin{example}
  \label{exa:split_correspondence}
  Let \((\Midd,\mapl,\mapr,\Kclass)\) be a correspondence from~\(\Source\) to~\(\Target\).  Then \((\mapl,\Kclass)^*\) is a correspondence from~\(\Source\) to~\(\Midd\) and~\(\mapr!\) is a correspondence from~\(\Midd\) to~\(\Target\).  These two are transverse by Example~\ref{exa:transverse_Mid2}, and their composition product \((\mapl,\Kclass)^* \inpro \mapr!\) is the given correspondence \((\Midd,\mapl,\mapr,\Kclass)\).
\end{example}

Finally, we use Example~\ref{exa:transverse_Mid2} to show that the composition of correspondences generalises the composition of normally non-singular maps:

\begin{corollary}
  \label{cor:compose_normal_cor}
  Let \(\mapr_1\colon \Source\to\Target\) and \(\mapr_2\colon \Target\to\Third\) be \(\Coh\)\nb-oriented normally non-singular maps.  Then \(\mapr_1!  \inpro \mapr_2!  = (\mapr_2\circ\mapr_1)!\).
\end{corollary}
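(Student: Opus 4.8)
The plan is to compute the intersection product $\mapr_1!\inpro\mapr_2!$ explicitly and to recognise its ingredients as those of the composite normally non-singular map $\mapr_2\circ\mapr_1$ of~\cite{Emerson-Meyer:Normal_maps}. Write $\mapr_1!=(\Source,\Id_\Source,\mapr_1,1)$ and $\mapr_2!=(\Target,\Id_\Target,\mapr_2,1)$ as in Example~\ref{exa:wrong-way_correspondence}, with $\mapr_1=(\VB_1,\Triv_1,\hat{\mapr}_1)$ and $\mapr_2=(\VB_2,\Triv_2,\hat{\mapr}_2)$. First I would check that $\mapr_1!$ and $\mapr_2!$ are transverse. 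Since the left leg of~$\mapr_2!$ is $\Id_\Target$, the fibre product $\Midd\defeq\Source\times_\Target\Target$ reduces to~$\Source$, the space $\total{\VB_1}\times_\Target\Target$ reduces to~$\total{\VB_1}$, and the pull-back~$\VB_1^\Midd$ is just~$\VB_1$; the transversality map of Definition~\ref{def:transverse} is then the identity of~$\total{\VB_1}$, so the condition holds trivially (a degenerate case of Example~\ref{exa:transverse_Mid2}). By Theorem~\ref{the:compose_transverse_correspondence}, the intersection product $\mapr_1!\inpro\mapr_2!$ is therefore equivalent to the composition product of the associated special correspondences, and so represents the class $\mapr_1!\inpro\mapr_2!$ in~$\Bic^*$; it thus suffices to identify this intersection product with $(\mapr_2\circ\mapr_1)!$.

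Next I would unwind the intersection product recipe. The middle space is $\Source\times_\Target\Target$, which the trace of~$\mapr_1$ identifies with~$\Source$; under this identification $\pr_1=\Id_\Source$ and $\pr_2$ equals the trace of~$\mapr_1$, so the left leg becomes $\Id_\Source$, the class $\Kclass_1\otimes_\Target\Kclass_2=1\otimes_\Target 1$ becomes the unit in $\Coh^0_\Source(\Source)$, and the forward map has $\Triv=\Triv_1\oplus\Triv_2$ and $\VB=\pr_1^*(\VB_1)\oplus\pr_2^*(\VB_2)=\VB_1\oplus\mapr_1^*(\VB_2)$. Its defining open embedding $\hat{\mapr}\colon\total{\VB}\opem\total{\Triv^\Third}$ is, by construction, the composite of the open embedding $\total{\VB}\opem\total{\VB_1}\times_\Target\total{\VB_2}$ supplied by transversality -- here built from~$\hat{\mapr}_1$ and the zero section of~$\VB_2$ -- with the open embedding $\total{\VB_1}\times_\Target\total{\VB_2}\opem\total{\Triv^\Third}$ obtained by composing the special correspondences $(\Triv_1,\total{\VB_1},\proj{\VB_1},1)$ and $(\Triv_2,\total{\VB_2},\proj{\VB_2},1)$, which is itself assembled from~$\hat{\mapr}_1$ and~$\hat{\mapr}_2$. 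Hence $\mapr_1!\inpro\mapr_2!=\bigl(\Source,\Id_\Source,(\VB_1\oplus\mapr_1^*\VB_2,\Triv_1\oplus\Triv_2,\hat{\mapr}),1\bigr)$.

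The final step is to observe that the triple $(\VB_1\oplus\mapr_1^*\VB_2,\Triv_1\oplus\Triv_2,\hat{\mapr})$ is exactly the data defining $\mapr_2\circ\mapr_1$ in~\cite{Emerson-Meyer:Normal_maps}, up to the isotopies built into that definition: the composite there is obtained by first extending~$\hat{\mapr}_1$ to an open embedding $\total{\VB_1}\times_\Target\total{\VB_2}\opem\total{\Triv_1}\times_\Base\total{\VB_2}$ and then applying~$\hat{\mapr}_2$ fibrewise over~$\total{\Triv_1}$, which is precisely the two-step construction appearing above. The only indeterminacy is which of the two homotopic maps $\mapr_1\circ\proj{\VB_1}$ and $\proj{\Triv_1^\Target}\circ\hat{\mapr}_1$ from~$\total{\VB_1}$ to~$\Target$ is used to form $\total{\VB_1}\times_\Target\total{\VB_2}$, and this is immaterial by \cite{Emerson-Meyer:Normal_maps}*{Proposition 2.22}, as already noted before Theorem~\ref{the:compose_transverse_correspondence}. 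Since equivalent normally non-singular maps give equivalent correspondences under $\mapr\mapsto\mapr!$, this yields $\mapr_1!\inpro\mapr_2!=(\mapr_2\circ\mapr_1)!$.

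The main obstacle is this last identification. Both the open embedding produced by the intersection product recipe and the one appearing in the definition of the composition of normally non-singular maps are canonical only up to isotopy, so the real work is to check that the two constructions make isotopic choices of the intervening tubular neighbourhoods. This is routine but notationally heavy, and it is the one place where the precise definition of the composition of normally non-singular maps from~\cite{Emerson-Meyer:Normal_maps} really has to be invoked.
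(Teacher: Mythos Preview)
Your proposal is correct and follows exactly the same line as the paper's proof: verify transversality via the degenerate case of Example~\ref{exa:transverse_Mid2}, apply Theorem~\ref{the:compose_transverse_correspondence}, and then identify the resulting normally non-singular map with \(\mapr_2\circ\mapr_1\). The paper compresses your last two paragraphs into the single phrase ``Inspection shows that~\(\mapr\) agrees with the product of normally non-singular maps~\(\mapr_2\circ\mapr_1\)'', so what you call the ``main obstacle'' is precisely the routine check the paper leaves implicit.
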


\begin{proof}
  Let~\(\Cor_j\) be the canonical representatives for~\(\mapr_j!\) for \(j=1,2\).  We conclude that \(\Midd=\Source\), \(\mapl=\Id\), \(\Kclass=1\), and the transversality condition is automatic by Example~\ref{exa:transverse_Mid2}.  Hence the product is represented by the intersection product \(\Cor_1 \inpro \Cor_2\) by Theorem~\ref{the:compose_transverse_correspondence}.  This is of the form~\(\mapr!\) for a normally non-singular map \(\mapr\colon \Source\to\nobreak\Third\).  Inspection shows that~\(\mapr\) agrees with the product of normally non-singular maps~\(\mapr_2\circ\mapr_1\).
\end{proof}

As in Kasparov theory, we may combine exterior products and composition products to an operation
\begin{multline}
  \label{eq:gen_intersection_product}
  \inpro_\Third\colon
  \Bic^i(\Source_1,\Target_1\times_\Base\Third) \times
  \Bic^j(\Third\times_\Base\Source_2,\Target_2) \to
  \Bic^{i+j}(\Source_1\times_\Base\Source_2,
  \Target_1\times_\Base\Target_2),\\
  (\alpha,\beta) \mapsto
  (\alpha\times_\Base\Id_{\Source_2}) \inpro
  (\Id_{\Target_1} \times_\Base\beta),
\end{multline}
which is again associative and graded commutative in a suitable sense.  This operation will be used heavily in~\S\ref{sec:duality} to construct duality isomorphisms.

Recall that \(\Bic^*(\Base,\Target) \cong \Coh^*_\Base(\Target)\) is the \(\Coh\)\nb-cohomology of~\(\Target\) with \(\Base\)\nb-compact support (Theorem~\ref{the:geometric_K}).  Since~\(\Bic^*\) is a category, correspondences act on \(\Coh^*_\Base(\Target)\); this extends the wrong-way maps for \(\Coh\)\nb-oriented normally non-singular maps in \cite{Emerson-Meyer:Normal_maps}*{Theorem 5.16}.

\subsection{Composition of smooth correspondences using transversality}
\label{sec:compose_transversal}

We recover the transversality formula for the composition of two smooth correspondences in general position of Connes and Skandalis~\cite{Connes-Skandalis:Indice_feuilletages}.

\begin{theorem}
  \label{the:transverse_smooth}
  Let \(\Midd_1\), \(\Midd_2\), and~\(\Target\) be smooth \(\Grd\)\nb-manifolds; let \(\NM = (\VB,\Triv,\hat{\mapr})\) be a smooth normally non-singular \(\Grd\)\nb-map from~\(\Midd_1\) to~\(\Target\) with trace~\(\mapr\) and let \(\mapl\colon \Midd_2\to\Target\) be a smooth \(\Grd\)\nb-map.  These two maps are transverse if
  \[
  D_{\midd_1}\mapr(\Tvert_{\midd_1}\Midd_1) +
  D_{\midd_2}\mapl(\Tvert_{\midd_2}\Midd_2)
  = \Tvert_\target\Target
  \]
  for all \(\midd_1\in\Midd_1\), \(\midd_2\in\Midd_2\) with \(\target\defeq \mapr(\midd_1) = \mapl(\midd_2)\).
\end{theorem}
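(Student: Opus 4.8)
The plan is to follow the argument of Example~\ref{exa:transverse_smooth}, upgrading it to the equivariant setting and allowing an arbitrary smooth normally non-singular map~\(\NM\) in place of the lifting of a smooth map via a graph embedding.  Write \(\Midd\defeq \Midd_1\times_\Target\Midd_2\), formed with the trace \(\mapr\colon \Midd_1\to\Target\) of~\(\NM\) and with \(\mapl\colon \Midd_2\to\Target\), and put \(q\defeq \proj{\Triv^\Target}\circ\hat{\mapr}\colon \total{\VB}\to\Target\).  Since~\(\hat{\mapr}\) is a smooth open embedding (hence a local diffeomorphism) and \(\proj{\Triv^\Target}\) is a \(\Grd\)\nb-vector bundle projection, \(q\) is a \(\Grd\)\nb-equivariant submersion; therefore \(\total{\VB}\times_\Target\Midd_2\), formed with~\(q\) and~\(\mapl\), is a smooth \(\Grd\)\nb-manifold irrespective of any transversality, while the classical transversality hypothesis makes \(\Midd = \Midd_1\times_\Target\Midd_2\) a smooth \(\Grd\)\nb-submanifold of \(\Midd_1\times\Midd_2\).

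First I would check that \(\zers{\VB}\times_\Target\Id_{\Midd_2}\colon \Midd\to\total{\VB}\times_\Target\Midd_2\) is a smooth \(\Grd\)\nb-equivariant closed embedding -- it is the restriction of the closed embedding \(\zers{\VB}\times\Id_{\Midd_2}\) -- and then identify its normal bundle.  The three vector bundle isomorphisms used in Example~\ref{exa:transverse_smooth}, namely \(\Tvert\Midd\cong \pr_1^*(\Tvert\Midd_1)\oplus_{(\pr_1\circ\mapr)^*(\Tvert\Target)}\pr_2^*(\Tvert\Midd_2)\), \(\Tvert(\total{\VB}\times_\Target\Midd_2)\cong\pr_1^*(\Tvert\total{\VB})\oplus_{\Tvert\Target}\pr_2^*(\Tvert\Midd_2)\), and \(\zers{\VB}^*(\Tvert\total{\VB})\cong\Tvert\Midd_1\oplus\VB\), remain available: the first is exactly the classical transversality assumption, the second needs only that~\(q\) is a submersion, and the third always holds, with \(\VB\cong\zers{\VB}^*(\Tvert\total{\VB})\big/D\zers{\VB}(\Tvert\Midd_1)\) canonically.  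Combining them as in Example~\ref{exa:transverse_smooth} shows that the cokernel of the vector bundle map \(\Tvert(\Midd)\to\Tvert(\total{\VB}\times_\Target\Midd_2)\) induced along \(\zers{\VB}\times_\Target\Id_{\Midd_2}\) is \(\pr_1^*(\VB)\), which is \(\VB_1^\Midd\) in the notation of Definition~\ref{def:transverse}.  The one place where the extra generality enters is that the derivative of~\(q\) along the fibre directions of~\(\VB\) need not vanish; this is harmless, because the surjectivity needed to identify this cokernel with~\(\VB\) uses only the classical transversality \(D_{\midd_1}\mapr(\Tvert_{\midd_1}\Midd_1) + D_{\midd_2}\mapl(\Tvert_{\midd_2}\Midd_2) = \Tvert_\target\Target\) of the traces, and nothing about the auxiliary open embedding \(\hat{\mapr}\colon \total{\VB}\opem\total{\Triv^\Target}\).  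Thus \(\zers{\VB}\times_\Target\Id_{\Midd_2}\) is a smooth \(\Grd\)\nb-equivariant normally non-singular embedding with normal bundle \(\VB_1^\Midd\), as in Remark~\ref{rem:transverse_and_normal_embedding}.

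Finally I would invoke the equivariant Tubular Neighbourhood Theorem for smooth \(\Grd\)\nb-manifolds from~\cite{Emerson-Meyer:Normal_maps}: applied to the embedding above it gives a \(\Grd\)\nb-equivariant diffeomorphism from an open neighbourhood of the zero section in \(\total{\VB_1^\Midd}\) onto an open neighbourhood of~\(\Midd\) in \(\total{\VB}\times_\Target\Midd_2\) restricting to \(\zers{\VB}\times_\Target\Id_{\Midd_2}\) on the zero section; composing with a fibrewise rescaling extends it to a \(\Grd\)\nb-equivariant open embedding \(\total{\VB_1^\Midd}\opem\total{\VB}\times_\Target\Midd_2\) with the same restriction.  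This is precisely the open embedding demanded in Definition~\ref{def:transverse}, so~\(\NM\) is transverse to~\(\mapl\).  I do not expect a serious obstacle: the normal bundle bookkeeping is routine once one observes that it requires only transversality of the traces, and the sole ingredient beyond the non-equivariant Example~\ref{exa:transverse_smooth} is the equivariant tubular neighbourhood theorem, which is at our disposal in the numerably proper smooth setting.
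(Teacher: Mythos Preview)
Your proposal is correct and follows essentially the same approach as the paper: the paper's proof consists of two sentences, stating that the argument is literally the same as in Example~\ref{exa:transverse_smooth} and that the required tubular neighbourhood exists by the equivariant Tubular Neighbourhood Theorem of~\cite{Emerson-Meyer:Normal_maps}. Your write-up simply unpacks this in more detail, including the harmless observation that an arbitrary smooth normally non-singular map replaces the specific graph-embedding lifting used in the example.
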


\begin{proof}
  The argument is literally the same as in the non-equivariant case, see Example~\ref{exa:transverse_smooth}.  The Tubular Neighbourhood we need exists by \cite{Emerson-Meyer:Normal_maps}*{Theorem 3.18}.
\end{proof}

\begin{corollary}
  \label{cor:compose_smooth_normal_correspondences}
  Let \(\NM_1 = (\Midd_1,\mapl_1,\mapr_1,\Kclass_1)\) and \(\NM_2 = (\Midd_2,\mapl_2,\mapr_2,\Kclass_2)\) be smooth correspondences from~\(\Source\) to~\(\Target\) and from~\(\Target\) to~\(\Third\), respectively.  Assume that both \(\Midd_1\) and~\(\Midd_2\) admit smooth normally non-singular maps to~\(\Base\), so that we lose nothing if we view \(\mapr_1\) and~\(\mapr_2\) as \(\Coh\)\nb-oriented smooth maps.  Assume also that \(\mapr_1\) and~\(\mapl_2\) are transverse as in Theorem~\textup{\ref{the:transverse_smooth}}.  Then \(\Midd_1\times_\Target\Midd_2\) is a smooth \(\Grd\)\nb-manifold with a smooth normally non-singular map to~\(\Base\) as well.  The intersection product of the two correspondences above is
  \[
  \NM_1 \inpro_\Target \NM_2
  = \bigl(\Midd_1\times_\Target\Midd_2, \mapl_1\circ\pi_1,
  \mapr_2\circ\pi_2, \pi_1^*(\Kclass_1)\otimes
  \pi_2^*(\Kclass_2)\bigr),
  \]
  where \(\pi_j\colon \Midd_1\times_\Target \Midd_2\to\Midd_j\) for \(j=1,2\) are the canonical projections.
\end{corollary}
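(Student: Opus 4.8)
The plan is to unwind the construction of the intersection product recalled before Theorem~\ref{the:compose_transverse_correspondence} and then to appeal to the essential uniqueness of smooth normally non-singular lifts. First I would note that, by Theorem~\ref{the:transverse_smooth}, the hypothesis makes the smooth normally non-singular lift of~\(\mapr_1\) transverse to~\(\mapl_2\) in the sense of Definition~\ref{def:transverse}; hence the intersection product \(\NM_1\inpro_\Target\NM_2\) is defined and, by Theorem~\ref{the:compose_transverse_correspondence}, represents the composition product in~\(\Bic^*\). Its underlying \(\Grd\)\nb-space is \(\Midd\defeq\Midd_1\times_\Target\Midd_2\), which the transversality hypothesis makes into a smooth \(\Grd\)\nb-manifold (compare Example~\ref{exa:transverse_smooth}); its left map is \((\midd_1,\midd_2)\mapsto\mapl_1(\midd_1)=\mapl_1\circ\pi_1\) and its \(\Coh\)\nb-class is \(\Kclass_1\otimes_\Target\Kclass_2 = \pi_1^*(\Kclass_1)\otimes\pi_2^*(\Kclass_2)\) directly from the definition of~\(\otimes_\Target\). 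So three of the four ingredients of \(\NM_1\inpro_\Target\NM_2\) already coincide with the asserted formula.

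Next I would check that~\(\Midd\) carries a smooth normally non-singular map to~\(\Base\), since this is exactly what lets us apply the uniqueness of smooth normally non-singular lifts on~\(\Midd\). The key observation is that the projection \(\pi_2\colon\Midd\to\Midd_2\) is itself a smooth normally non-singular map: writing \(\mapr_1=(\VB_1,\Triv_1,\hat{\mapr}_1)\), transversality provides a smooth normally non-singular embedding \(\Midd\opem\total{\VB_1}\times_\Target\Midd_2\) with normal bundle \(\pi_1^*(\VB_1)\), and \(\total{\VB_1}\times_\Target\Midd_2\) is an open subset of \(\total{\Triv_1^{\Midd_2}}\), so that \(\total{\VB_1}\times_\Target\Midd_2\opem\total{\Triv_1^{\Midd_2}}\epi\Midd_2\) is a special normally non-singular submersion; composing the two and then composing with the given smooth normally non-singular map \(\Midd_2\to\Base\) yields a smooth normally non-singular map \(\Midd\to\Base\).

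It then remains to identify the normally non-singular map~\(\mapr\) produced by the intersection-product construction with the \(\Coh\)\nb-oriented smooth map \(\mapr_2\circ\pi_2\). By construction \(\mapr=(\VB,\hat{\mapr},\Triv)\) with \(\VB=\pi_1^*(\VB_1)\oplus\pi_2^*(\VB_2)\) and \(\Triv=\Triv_1\oplus\Triv_2\) carrying the direct-sum \(\Coh\)\nb-orientations, and \(\hat{\mapr}\) is the composite of the transversality open embedding \(\total{\VB}\opem\total{\VB_1}\times_\Target\total{\VB_2}\) with the open embedding \(\total{\VB_1}\times_\Target\total{\VB_2}\opem\total{\Triv^\Third}\) coming from the composition of the two special correspondences; in the smooth category one checks, using the smooth Tubular Neighbourhood Theorem \cite{Emerson-Meyer:Normal_maps}*{Theorem 3.18}, that both open embeddings, and hence~\(\hat{\mapr}\), are fibrewise diffeomorphisms, so that~\(\mapr\) is a genuine smooth normally non-singular map, and its trace is \(\mapr_2\circ\pi_2\) by the very design of the construction. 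Finally, the stable normal bundle of~\(\mapr\) is \([\VB]-[\Triv^\Midd]=\pi_1^*\bigl([\VB_1]-[\Triv_1^{\Midd_1}]\bigr)+\pi_2^*\bigl([\VB_2]-[\Triv_2^{\Midd_2}]\bigr)\), which the vector bundle isomorphisms of Example~\ref{exa:transverse_smooth} identify, together with its \(\Coh\)\nb-orientation, with \([\Tvert\Midd]-(\mapr_2\circ\pi_2)^*[\Tvert\Third]\) and the product orientation, that is, with the canonical \(\Coh\)\nb-orientation of the smooth map \(\mapr_2\circ\pi_2\). By the uniqueness of smooth normally non-singular lifts on~\(\Midd\), the map~\(\mapr\) is therefore equivalent to the canonical lift of \(\mapr_2\circ\pi_2\), and the displayed formula for \(\NM_1\inpro_\Target\NM_2\) follows.

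The step I expect to be the main obstacle is making the construction of~\(\hat{\mapr}\) go through in the smooth category: the composition of special correspondences and the transversality embedding are assembled from topological tubular neighbourhoods, and one must rerun these constructions with the smooth equivariant Tubular Neighbourhood Theorem so that~\(\hat{\mapr}\) becomes a fibrewise diffeomorphism; the parallel task of matching the direct-sum \(\Coh\)\nb-orientation on \(\VB=\pi_1^*\VB_1\oplus\pi_2^*\VB_2\) and \(\Triv=\Triv_1\oplus\Triv_2\) with the canonical orientation of \(\mapr_2\circ\pi_2\) via the isomorphisms of Example~\ref{exa:transverse_smooth} is routine but must be carried out carefully.
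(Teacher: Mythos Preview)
Your proposal is correct and follows essentially the same strategy as the paper: establish that \(\Midd=\Midd_1\times_\Target\Midd_2\) admits a smooth normally non-singular map to~\(\Base\), so that smooth normally non-singular maps out of~\(\Midd\) are determined up to smooth equivalence by their traces and \(\Coh\)\nb-orientations, and then read off the trace \(\mapr_2\circ\pi_2\) and the orientation from the intersection-product construction. The only notable difference is in how the normally non-singular map \(\Midd\to\Base\) is produced: the paper observes more directly that \(\Midd_1\times_\Target\Midd_2\) embeds in \(\Midd_1\times_\Base\Midd_2\), which in turn embeds in \(\total{\Triv_1\oplus\Triv_2}\) if \(\Midd_j\) embeds in \(\total{\Triv_j}\); your route via \(\pi_2\colon\Midd\to\Midd_2\) works as well but uses the transversality data where none is needed. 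The paper also leaves the orientation check to the reader, whereas you spell it out.
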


\begin{proof}
  If \(\Midd_1\) and~\(\Midd_2\) admit smooth normally non-singular maps to~\(\Base\), then so does \(\Midd_1\times_\Target\Midd_2\) because it embeds in \(\Midd_1\times_\Base \Midd_2\), which embeds in \(\total{\Triv_1 \oplus \Triv_2} = \total{\Triv_1} \times_\Base \total{\Triv_2}\) if~\(\Midd_j\) embeds in~\(\total{\Triv_j}\) for \(j=1,2\).  Under this assumption, we can replace all smooth normally non-singular maps by mere smooth maps, so that it suffices to describe the traces and the \(\Coh\)\nb-orientations of the normally non-singular maps we are dealing with.  Hence the assertion follows from the construction of the intersection product for transverse correspondences in~\S\ref{sec:compose_correspondences}.  We leave it to the reader to write down the \(\Coh\)\nb-orientation that the map \(\mapr_2\circ\pi_2\) inherits.
\end{proof}

\section{Duality isomorphisms}
\label{sec:duality}

There is a canonical notion of duality in~\(\Bic^*\) because it is a symmetric monoidal category: two \(\Grd\)\nb-spaces \(\Tot\) and~\(\Dual\) are \emph{dual} in~\(\Bic^*\) if there is a natural isomorphism
\[
\Bic^*(\Tot\times_\Base\Third_1,\Third_2) \cong
\Bic^*(\Third_1,\Dual\times_\Base\Third_2)
\]
for all \(\Grd\)\nb-spaces \(\Third_1\) and~\(\Third_2\).  This is equivalent to the symmetric duality isomorphism
\[
\Bic^*(\Dual\times_\Base\Third_1,\Third_2) \cong
\Bic^*(\Third_1,\Tot\times_\Base\Third_2).
\]

But this notion does not cover familiar duality isomorphisms for non-compact smooth manifolds.  Let~\(\Tot\) be a smooth manifold and let~\(\Tvert\Tot\) be its tangent space.  Then there are natural isomorphisms
\[
\RK^*(\Tot)\cong \K_*(\Tvert\Tot),\qquad
\K_*(\Tot)\cong \K^*_\Tot(\Tvert\Tot)
\]
between the representable \(\K\)\nb-theory of~\(\Tot\) and the \(\K\)\nb-homology of~\(\Tvert\Tot\), and between the \(\K\)\nb-homology of~\(\Tot\) and the \(\K\)\nb-theory of~\(\Tvert\Tot\) with \(\Tot\)\nb-compact support.  These two duality isomorphisms are generalised in~\cite{Emerson-Meyer:Dualities}, following Gennadi Kasparov~\cite{Kasparov:Novikov}.  The abstract conditions in~\cite{Emerson-Meyer:Dualities} that are equivalent to the existence of such duality isomorphisms only use formal properties of equivariant Kasparov theory and therefore carry over to the geometric setting we consider here.  We sketch this generalisation in this section.

The two duality isomorphisms have the following important applications.  The first duality is used in~\cite{Emerson-Meyer:Dualities} to define equivariant Euler characteristics and equivariant Lefschetz invariants in equivariant \(\K\)\nb-homology; the geometric counterpart of this duality described below will be used in a forthcoming article to compute Euler characteristics and Lefschetz invariants of correspondences in geometric terms.  The second duality allows, in particular, to reduce bivariant \(\K\)\nb-groups to \(\K\)\nb-theory with support conditions.  This will be used below to show that the topological and analytic versions of bivariant \(\K\)\nb-theory agree if there is a duality isomorphism.

Our notion of duality does not contain Spanier--Whitehead duality as a special case.  The main issue is that we require the dual of~\(\Tot\) to be a space over~\(\Tot\).  This seems unavoidable for the second duality isomorphism and rules out taking a complement of~\(\Tot\) in some ambient space as in Spanier--Whitehead Duality.

\subsection{Preparations}
\label{sec:prepare_duality}

First we need some notation.  To emphasise the groupoid under consideration, we now write \(\Coh^*_\Grd(\Third)\) and \(\Bic^*_\Grd(\Source,\Target)\) instead of \(\Coh^*(\Third)\) and \(\Bic^*(\Source,\Target)\).  Let~\(\Third\) be a \(\Grd\)\nb-space.  Recall that \(\Grd\ltimes\Third\)-spaces are \(\Grd\)\nb-spaces with a \(\Grd\)\nb-map to~\(\Third\).  Hence a cohomology theory~\(\Coh^*_\Grd\) for \(\Grd\)\nb-spaces restricts to one for \(\Grd\ltimes\Third\)\nb-spaces.  We denote the latter by \(\Coh^*_{\Grd\ltimes\Third}\) and get a corresponding bivariant theory \(\Bic^*_{\Grd\ltimes\Third}(\Source,\Target)\) for \(\Grd\ltimes\Third\)\nb-spaces \(\Source\) and~\(\Target\).  The special correspondences \((\Triv,\Midd,\mapl,\Kclass)\) that enter in its definition differ from the ones for \(\Bic^*_\Grd(\Source,\Target)\) in the following ways:
\begin{itemize}
\item \(\Triv\) is an \(\Coh\)\nb-oriented \(\Grd\)\nb-vector bundle on~\(\Third\), not on~\(\Base\);

\item the map \(\mapl\colon \Midd\to\Source\) is a \(\Grd\)\nb-map over~\(\Third\).
\end{itemize}
The first modification is of little importance: if we make the mild assumption that any \(\Grd\)\nb-vector bundle over~\(\Third\) is subtrivial, then we may use Thom modification to reduce to \(\Grd\ltimes\Third\)-equivariant correspondences whose \(\Grd\)\nb-vector bundle over~\(\Third\) is trivial.  But the second condition has a significant effect.

Theorem~\ref{the:geometric_K} generalises to an isomorphism
\begin{equation}
  \label{eq:Bic_contains_Coh}
  \Bic^*_{\Grd\ltimes\Source}(\Source,\Target) \cong
  \Coh^*_{\Grd,\Source}(\Target)
\end{equation}
for any \(\Grd\ltimes\Source\)-space~\(\Target\), where the right hand side denotes the \(\Source\)\nb-compactly supported version of~\(\Coh^*_\Grd\).  This is quite different than \(\Bic^*_\Grd(\Source,\Target)\).

The functoriality properties of normally non-singular maps carry over to correspondences.

First, a map \(\varphi\colon \Third_1\to\Third_2\) induces a symmetric monoidal functor
\[
\varphi^*\colon
\Bic^*_{\Grd\ltimes\Third_2}(\Source,\Target) \to
\Bic^*_{\Grd\ltimes\Third_1}(\varphi^*\Source,
\varphi^*\Target).
\]
We often write \(\Third_1 \times_{\Third_2} \alpha\) instead of~\(\varphi^*\alpha\) for \(\alpha \in \Bic^*_{\Grd\ltimes\Third_2}(\Source,\Target)\).

Secondly, if all \(\Grd\ltimes\Third_2\)\nb-vector bundles over~\(\Third_1\) are subtrivial (that is, direct summands of \(\Grd\)\nb-vector bundles pulled back from~\(\Third_2\)), then there is a forgetful functor
\[
\Bic^*_{\Grd\ltimes\Third_1}(\Source,\Target) \to
\Bic^*_{\Grd\ltimes\Third_2}(\Source,\Target)
\]
in the opposite direction, where we view \(\Grd\ltimes\Third_1\)\nb-spaces as \(\Grd\ltimes\Third_2\)\nb-spaces by composing the anchor map to~\(\Third_1\) with~\(\varphi\).  We usually denote the image of \(g\in\Bic^*_{\Grd\ltimes\Third}(\Source,\Target)\) under the forgetful functor by \(\overline{g} \in \Bic^*_\Grd(\Source,\Target)\).

\begin{remark}
  \label{rem:drop_pull-back}
  When we compose morphisms, we sometimes drop pull-back functors and forgetful functors from our notation.  For instance, if \(\Theta\in\Bic^*_{\Grd\ltimes\Source}(\Source, \Source\times_\Base\Dual)\) and \(D\in\Bic^*_\Grd(\Dual,\Base)\) for two \(\Grd\)\nb-spaces \(\Source\) and~\(\Dual\), then \(\Theta \inpro_\Dual D\in \Bic^*_{\Grd\ltimes\Source}(\Source,\Source) \cong \Coh^*(\Source)\) denotes the product of~\(\Theta\) and \(\anchor^*(D) \in \Bic^*_{\Grd\ltimes\Source}(\Source\times_\Base\Dual,\Source)\) where \(\anchor\colon \Source\to\Base\) is the anchor map and we identify \(\Source\times_\Base\Base\cong\Source\).
\end{remark}

\begin{definition}
  \label{def:underline_factor}
  Let~\(\Tot\) be a \(\Grd\)\nb-space and let \(\Other_1\) and~\(\Other_2\) be two \(\Grd\)\nb-spaces over~\(\Tot\).  Then we may view \(\Other_1\times_\Base\Other_2\) as a \(\Grd\)\nb-space over~\(\Tot\) in two different ways, using the first or second coordinate projection followed by the anchor map \(\Other_j\to\Tot\).  To distinguish these two \(\Grd\ltimes\Tot\)\nb-spaces, we underline the factor whose \(\Tot\)\nb-structure is used.  Thus the groups \(\Bic^*_{\Grd\ltimes\Tot}(\Tot, \underline{\Tot}\times_\Base\Dual)\) and \(\Bic^*_{\Grd\ltimes\Tot}(\Tot, \Tot\times_\Base\underline{\Dual})\) for a \(\Grd\ltimes\Tot\)-space~\(\Dual\) are different.
\end{definition}

\subsection{The two duality isomorphisms}
\label{sec:duality_isomorphisms}

Throughout this section, \(\Tot\) is a \(\Grd\)\nb-space, \(\Dual\) is a \(\Grd\ltimes\Tot\)-space, and \(D\in\Bic^{-n}_\Grd(\Dual,\Base)\) for some \(n\in\Z\); \(\Third\) is a \(\Grd\ltimes\Tot\)\nb-space, and~\(\Target\) is a \(\Grd\)\nb-space.  We assume throughout that all \(\Grd\)\nb-vector bundles over~\(\Source\) are subtrivial.  We are going to define two duality maps involving this data and then analyse when they are invertible, following~\cite{Emerson-Meyer:Dualities}.

The \emph{first duality map} for \((\Tot,\Dual,D)\) with coefficients \(\Third\) and~\(\Target\) is the map
\begin{equation}
  \label{eq:first_duality}
  \PD^*\colon \Bic^i_{\Grd\ltimes\Tot}(\Third,
  \Tot\times_\Base\Target)\to
  \Bic^{i-n}_\Grd(\Dual\times_\Tot\Third, \Target),\qquad
  g\mapsto (-1)^{in} \overline{(\Dual\times_\Tot g)} \inpro_\Dual D.
\end{equation}
The \emph{second duality map} for \((\Tot,\Dual,D)\) with coefficients \(\Third\) and~\(\Target\) is the map
\begin{equation}
  \label{eq:second_duality}
  \SPD^*\colon \Bic_{\Grd\ltimes\Tot}^i
  (\Third,\Dual\times_\Base\Target) \to
  \Bic_\Grd^{i-n}(\Third,\Target),\qquad
  f\mapsto (-1)^{in} \overline{f}\inpro_\Dual D.
\end{equation}
In both cases, the overlines denote the forgetful functor \(\Bic^*_{\Grd\ltimes\Tot}\to\Bic^*_\Grd\).

The second duality map is particularly interesting for \(\Third=\Tot\): then it maps \(\Coh^i_{\Grd,\Tot}(\Dual\times_\Base\Target) \cong \Bic_{\Grd\ltimes\Tot}^i(\Tot,\Dual\times_\Base\Target)\) to \(\Bic_\Grd^{i-n}(\Tot,\Target)\) by Theorem~\ref{the:geometric_K}.

Necessary and sufficient conditions for analogous duality maps in Kasparov theory to be isomorphisms are analysed in~\cite{Emerson-Meyer:Dualities}.  These carry over literally to our setting because they only use formal properties of Kasparov theory.

\begin{theorem}
  \label{the:first_duality}
  Fix \(\Tot\), \(\Dual\), \(D\) and~\(\Third\).  The first duality map is an isomorphism for all \(\Grd\)\nb-spaces~\(\Target\) if and only if there is \(\Theta_\Third \in \Bic^n_{\Grd\ltimes\Tot}\bigl(\Third, \underline{\Tot}\times_\Base (\Dual\times_\Tot\Third)\bigr)\) with the following properties:
  \begin{enumerate}[label=\textup{(\roman{*})}]
  \item \label{first_duality_1} \(\overline{(\Dual\times_\Tot \Theta_\Third)} \inpro_\Dual D = (-1)^n \Id_{\Dual\times_\Tot\Third}\) in \(\Bic^0_\Grd(\Dual\times_\Tot\Third, \Dual\times_\Tot\Third)\);

  \item \label{first_duality_2} \((-1)^{in} \Theta_\Third \inpro_{\Dual\times_\Tot\Third} \overline{(\Dual\times_\Tot g)} \inpro_\Dual D = g\) for all \(g\in \Bic^i_{\Grd\ltimes\Tot}(\Third,\Tot\times_\Base \Target)\) and all \(\Grd\)\nb-spaces~\(\Target\).
  \end{enumerate}
  Furthermore, the inverse of\/ \(\PD^*\) is of the form
  \begin{equation}
    \label{eq:first_duality_inverse}
    \PD\colon \Bic^{i-n}_\Grd(\Dual\times_\Tot\Third,\Target)
    \to \Bic^i_{\Grd\ltimes\Tot}(\Third,
    \Tot\times_\Base\Target),\qquad
    f\mapsto
    \Theta_\Third \inpro_{\Dual\times_\Tot\Third} f,
  \end{equation}
  and \(\Theta_\Third\) is determined uniquely.

  Suppose that \(\Theta\in\Bic^n_{\Grd\times\Tot}(\Tot, \underline{\Tot}\times_\Base\Dual)\) satisfies \(\Theta\inpro_\Dual D=\Id_\Tot\) in \(\Bic^n_{\Grd\times\Tot}(\Tot, \Tot)\).  Then the following conditions \ref{first_duality_3} and~\ref{first_duality_4} imply \ref{first_duality_1} and~\ref{first_duality_2}:
  \begin{enumerate}[resume, label=\textup{(\roman{*})}]
  \item \label{first_duality_3} the following diagram in~\(\Bic^*_\Grd\) commutes:
    \[
    \xymatrix{
      \Dual\times_\Tot \Third
      \ar[r]^-{\overline{\Dual\times_\Tot\Theta_\Third}}
      \ar[d]_{\overline{(\Dual\times_\Tot\Third) \times_\Tot \Theta}}&
      \Dual\times_\Base (\Dual\times_\Tot\Third)
      \ar@{<->}[dl]_\cong^{(-1)^n\flip}\\
      (\Dual\times_\Tot\Third) \times_\Base \Dual.
    }
    \]

  \item \label{first_duality_4} \(\Theta_\Third \inpro_{\Dual\times_\Tot\Third} \overline{(\Dual\times_\Tot g)} = \Theta \inpro_\Tot g\) in \(\Bic^{i+n}_{\Grd\ltimes\Tot}(\Third, \underline{\Tot}\times_\Base \Dual\times_\Base\Target)\) for all \(g\in \Bic^i_{\Grd\ltimes\Tot}(\Third, \Tot\times_\Base \Target)\) and all \(\Grd\)\nb-spaces~\(\Target\).
  \end{enumerate}
\end{theorem}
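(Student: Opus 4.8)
The statement is the standard characterisation of a ``dualisable object'', and I would prove it exactly as the corresponding statement in \cite{Emerson-Meyer:Dualities}, using only that \(\Bic^*\) is a \(\Z\)\nb-graded symmetric monoidal additive category, that the generalised intersection product \eqref{eq:gen_intersection_product} is associative and graded commutative, and that the pull-back functors \(\varphi^*\) and the forgetful functors \(\overline{(\blank)}\) are symmetric monoidal and compatible with~\(\inpro\).  The first step is to observe that, for fixed \(\Tot,\Dual,D,\Third\), the map \(\PD^*\) of \eqref{eq:first_duality} is natural in~\(\Target\): the assignments \(\Target\mapsto\Bic^{*}_{\Grd\ltimes\Tot}(\Third,\Tot\times_\Base\Target)\) and \(\Target\mapsto\Bic^*_\Grd(\Dual\times_\Tot\Third,\Target)\) are covariant functors on the category \(\Bic^*_\Grd\) (for the first one use that \(\Tot\times_\Base(\blank)\) is a symmetric monoidal functor \(\Bic^*_\Grd\to\Bic^*_{\Grd\ltimes\Tot}\)), and \(\PD^*\) carries \(\Bic^i_{\Grd\ltimes\Tot}(\Third,\Tot\times_\Base\Target)\) to \(\Bic^{i-n}_\Grd(\Dual\times_\Tot\Third,\Target)\) naturally.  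Checking this is routine: after unravelling \(\PD^*\) the two composites differ only by commuting \(D\) (of degree~\(-n\)) past the acting morphism \(c\) (of degree~\(d\)), which costs a Koszul sign \((-1)^{dn}\) that is absorbed by the difference between the prefactors \((-1)^{(i+d)n}\) and \((-1)^{in}\).  In other words, the sign \((-1)^{in}\) in \eqref{eq:first_duality} is exactly the one that makes \(\PD^*\) natural.

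For the ``if'' part, assume \(\Theta_\Third\) satisfies \ref{first_duality_1} and~\ref{first_duality_2} and define \(\PD\) by \eqref{eq:first_duality_inverse}, i.e.\ \(\PD(f)\defeq\Theta_\Third\inpro_{\Dual\times_\Tot\Third}f\).  Then \(\PD\circ\PD^*=\Id\) is precisely \ref{first_duality_2} after inserting the definition of \(\PD^*\) and cancelling \((-1)^{2in}=1\).  For \(\PD^*\circ\PD=\Id\) one unravels \(\PD^*\bigl(\Theta_\Third\inpro_{\Dual\times_\Tot\Third}f\bigr)\): pulling back along \(\Dual\to\Tot\) produces a free copy of \(\Dual\) on which \(D\) acts, while \(f\) acts on a disjoint slot, so associativity and graded commutativity let \(D\) act directly on \(\Dual\times_\Tot\Theta_\Third\); by \ref{first_duality_1} this replaces the whole expression by \((-1)^n\,\Id_{\Dual\times_\Tot\Third}\inpro f\), and the signs (the prefactor of \(\PD^*\), the \((-1)^n\) from \ref{first_duality_1}, and the transposition of \(D\) past \(f\)) cancel to give~\(f\).

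For the ``only if'' part, suppose \(\PD^*\) is invertible for all~\(\Target\).  Then, by the naturality established above, \(\PD\defeq(\PD^*)^{-1}\) is a natural transformation from the corepresentable functor \(\Bic^*_\Grd(\Dual\times_\Tot\Third,\blank)\) to \(\Target\mapsto\Bic^{*+n}_{\Grd\ltimes\Tot}(\Third,\Tot\times_\Base\Target)\), so by the Yoneda lemma it is given by substituting into a universal element.  Concretely, set \(\Theta_\Third\defeq\PD(\Id_{\Dual\times_\Tot\Third})\in\Bic^n_{\Grd\ltimes\Tot}\bigl(\Third,\Tot\times_\Base(\Dual\times_\Tot\Third)\bigr)\); then naturality forces \(\PD(f)=\Theta_\Third\inpro_{\Dual\times_\Tot\Third}f\), which is \eqref{eq:first_duality_inverse}.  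Now \ref{first_duality_1} is the special case \(\PD^*(\Theta_\Third)=\Id_{\Dual\times_\Tot\Third}\) of \(\PD^*\circ\PD=\Id\) written out (using \(n^2\equiv n\bmod 2\)), and \ref{first_duality_2} is \(\PD\circ\PD^*=\Id\) written out.  Uniqueness of \(\Theta_\Third\) follows because any class satisfying \ref{first_duality_1} is sent by \(\PD^*\) to \(\Id_{\Dual\times_\Tot\Third}\) and \(\PD^*\) is injective.

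Finally, assume \ref{first_duality_3} and \ref{first_duality_4} together with \(\Theta\inpro_\Dual D=\Id_\Tot\).  Inserting \ref{first_duality_4} into the left-hand side of \ref{first_duality_2} and then composing with \(D\) over~\(\Dual\), the \(\Dual\)-contraction meets the \(\Dual\) coming from~\(\Theta\); commuting \(D\) past~\(g\) (Koszul sign \((-1)^{in}\)) and using \(\Theta\inpro_\Dual D=\Id_\Tot\) turns the expression into \((-1)^{in}\,\Id_\Tot\inpro_\Tot g\), and the remaining prefactor \((-1)^{in}\) then yields exactly~\(g\), which is \ref{first_duality_2}.  For \ref{first_duality_1}, the commuting diagram in \ref{first_duality_3} identifies \(\overline{\Dual\times_\Tot\Theta_\Third}\) with \((-1)^n\,\flip\circ\overline{(\Dual\times_\Tot\Third)\times_\Tot\Theta}\); composing with \(D\) over the ``new'' copy of~\(\Dual\), using that \((\Dual\times_\Tot\Third)\times_\Tot(\blank)\) is a functor compatible with~\(\inpro\) and that \(\Theta\inpro_\Dual D=\Id_\Tot\), collapses the right-hand side to \((-1)^n\,\Id_{\Dual\times_\Tot\Third}\) (the flip degenerates to the identity once one of its two factors has been contracted away).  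The main obstacle here is not conceptual but bookkeeping: one must track at each step which copy of~\(\Dual\) is contracted by~\(D\) and in which coordinate each pull-back or forgetful functor is applied, verify that these functors commute with the generalised intersection products in the way used above, and chase the Koszul signs so that they cancel.  All of this is precisely the argument of \cite{Emerson-Meyer:Dualities}, and since \(\Bic^*\) enjoys all the formal properties invoked there, it carries over verbatim.
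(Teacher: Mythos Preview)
Your proof is correct and follows essentially the same route as the paper's. The only difference is organisational: the paper avoids the explicit naturality/Yoneda argument by observing directly that \ref{first_duality_1} says \(\PD^*(\Theta_\Third)=\Id_{\Dual\times_\Tot\Third}\), then computing \(\PD^*\circ\PD(f)\) and \(\PD\circ\PD^*(g)\) by hand to see that \ref{first_duality_1} and \ref{first_duality_2} are literally equivalent to \(\PD^*\circ\PD=\Id\) and \(\PD\circ\PD^*=\Id\), respectively; this yields both directions and the form of the inverse at once without needing to verify naturality separately.
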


\begin{proof}
  Condition~\ref{first_duality_1} means that \(\Theta_\Third\in \Bic^n_{\Grd\ltimes\Tot}(\Third,\underline{\Tot}\times_\Base \Dual\times_\Tot\Third)\) satisfies \(\PD^*(\Theta_\Third) = \Id_{\Dual\times_\Tot\Third}\).  Hence~\ref{first_duality_1} is necessary for \(\PD^*\) to be invertible and determines~\(\Theta_\Third\) uniquely.  The associativity of~\(\inpro\) and the graded commutativity of exterior products yield
  \begin{multline*}
    \PD^*\circ\PD(f)
    = (-1)^{in} \overline{(\Dual\times_\Tot\Theta_\Third)}
    \inpro_{\Dual\times_\Tot \Third} (f \inpro_\Dual D)
    \\= (-1)^n \overline{(\Dual\times_\Tot\Theta_\Third)}
    \inpro_\Dual D \inpro_{\Dual\times_\Tot \Third} f
  \end{multline*}
  for \(f\in \Bic^{i-n}_\Grd(\Dual\times_\Tot\Third,\Target)\).  Hence \(\PD^*\circ\PD\) is the identity map if and only if Condition~\ref{first_duality_1} holds.  Then the inverse of~\(\PD^*\) can only be~\(\PD\).  By definition,
  \[
  \PD\circ\PD^*(g) = (-1)^{in} \Theta_\Third
  \inpro_{\Dual\times_\Tot\Third}
  \overline{(\Dual\times_\Tot g)} \inpro_\Dual D
  \]
  for all \(g\in \Bic^i_{\Grd\ltimes\Tot} (\Third,\Tot\times_\Base \Target)\).  Hence~\ref{first_duality_2} is equivalent to \(\PD\circ\PD^*=\Id\).  As a result, the maps \(\PD\) and~\(\PD^*\) defined as in \eqref{eq:first_duality} and~\eqref{eq:first_duality_inverse} are inverse to each other if and only if Conditions \ref{first_duality_1} and~\ref{first_duality_2} hold.

  Now assume that there is a class~\(\Theta\) as above.  Condition \ref{first_duality_3} implies~\ref{first_duality_1}; and~\ref{first_duality_4} implies~\ref{first_duality_2} because of the graded commutativity of exterior products:
  \[
  (-1)^{in} \Theta_\Third \inpro_{\Dual\times_\Tot\Third}
  \overline{(\Dual\times_\Tot g)} \inpro_\Dual D
  = (-1)^{in} \Theta \inpro_\Tot g \inpro_\Dual D
  = g \inpro_\Tot \Theta  \inpro_\Dual D
  = g
  \]
  for all~\(g\).
\end{proof}

A class~\(\Theta\) as above exists and is equal to~\(\Theta_\Tot\) if the first duality map is an isomorphism for \(\Third=\Tot\).  Hence the existence of~\(\Theta\) is a harmless assumption for our purposes.

\begin{theorem}
  \label{the:second_duality}
  Fix \(\Tot\), \(\Dual\), \(D\) and~\(\Third\).  The second duality map \(\SPD^*\) is an isomorphism for all \(\Grd\)\nb-spaces~\(\Target\) if and only if there is \(\widetilde{\Theta}_\Third \in \Bic^n_{\Grd\ltimes\Tot}(\Third, \underline{\Dual}\times_\Base\Third)\) with the following properties:
  \begin{enumerate}[label=\textup{(\roman{*})}]
  \item \label{second_duality_1} \(\overline{\widetilde{\Theta}_\Third} \inpro_\Dual D = (-1)^n \Id_\Third\) in \(\Bic^0_\Grd(\Third, \Third)\);

  \item \label{second_duality_2} \((-1)^{in} \widetilde{\Theta}_\Third \inpro_\Third \overline{g} \inpro_\Dual D = g\) in \(\Bic^i_{\Grd\ltimes\Tot}(\Third,\Dual\times_\Base \Target)\) for all \(\Grd\)\nb-spaces~\(\Target\) and all \(g\in \Bic^i_{\Grd\ltimes\Tot}(\Third,\Dual\times_\Base \Target)\).
  \end{enumerate}
  Furthermore, \textup{\ref{second_duality_1}} determines~\(\widetilde{\Theta}_\Third\) uniquely, and the inverse of\/ \(\SPD^*\) is of the form
  \begin{equation}
    \label{eq:second_duality_inverse}
    \SPD\colon \Bic^{i-n}_\Grd(\Third,\Target) \to
    \Bic^i_{\Grd\ltimes\Tot}(\Third,\Dual\times_\Base\Target),
    \qquad
    f\mapsto \widetilde{\Theta}_\Third \inpro_\Third f.
  \end{equation}

  Suppose that \(\Theta\in\Bic^n_{\Grd\times\Tot}(\Tot, \underline{\Tot}\times_\Base\Dual)\) satisfies \(\Theta\inpro_\Dual D=\Id_\Tot\) in \(\Bic^n_{\Grd\times\Tot}(\Tot, \Tot)\).  Then the following conditions \ref{second_duality_3} and~\ref{second_duality_4} imply \ref{second_duality_1} and~\ref{second_duality_2}:
  \begin{enumerate}[resume, label=\textup{(\roman{*})}]
  \item \label{second_duality_3} the following diagram in~\(\Bic^*_\Grd\) commutes:
    \[
    \xymatrix{
      \Third
      \ar[r]^-{\overline{\widetilde{\Theta}_\Third}}
      \ar[d]_{\overline{\Third \times_\Tot \Theta}}&
      \Dual\times_\Base\Third
      \ar@{<->}[dl]_\cong^{(-1)^n\flip}\\
      \Third \times_\Base \Dual.
    }
    \]

  \item \label{second_duality_4} \(\widetilde{\Theta}_\Third \inpro_\Third \overline{g} = \Theta \inpro_\Tot g\) in \(\Bic^{i+n}_{\Grd\ltimes\Tot}(\Third, \underline{\Dual}\times_\Base \Dual\times_\Base\Target)\) for all \(g\in \Bic^i_{\Grd\ltimes\Tot}(\Third, \Dual\times_\Base \Target)\) and all \(\Grd\)\nb-spaces~\(\Target\).
  \end{enumerate}
\end{theorem}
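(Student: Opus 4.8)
The plan is to follow the proof of Theorem~\ref{the:first_duality} almost verbatim, replacing~\(\Theta_\Third\) by~\(\widetilde{\Theta}_\Third\) and the maps \(\PD^*,\PD\) of \eqref{eq:first_duality},~\eqref{eq:first_duality_inverse} by the maps \(\SPD^*,\SPD\) of \eqref{eq:second_duality},~\eqref{eq:second_duality_inverse}; the second duality situation is closely parallel to the first. The first observation is that, up to the sign identity \((-1)^{n^2+n}=1\), Condition~\ref{second_duality_1} is exactly the assertion \(\SPD^*(\widetilde{\Theta}_\Third)=\Id_\Third\) for the element \(\widetilde{\Theta}_\Third\in\Bic^n_{\Grd\ltimes\Tot}(\Third,\Dual\times_\Base\Third)\) obtained with coefficient space \(\Target=\Third\), regarded as a \(\Grd\)\nb-space. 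Hence~\ref{second_duality_1} is forced whenever~\(\SPD^*\) is invertible, and it then determines~\(\widetilde{\Theta}_\Third\) uniquely as the preimage of the identity.

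Next I would compute the two composites using associativity of the intersection product and graded commutativity of exterior products. For \(f\in\Bic^{i-n}_\Grd(\Third,\Target)\),
\begin{multline*}
  \SPD^*\circ\SPD(f)=(-1)^{in}\,\overline{\widetilde{\Theta}_\Third\inpro_\Third f}\inpro_\Dual D
  \\= (-1)^n\,\bigl(\overline{\widetilde{\Theta}_\Third}\inpro_\Dual D\bigr)\inpro_\Third f,
\end{multline*}
where the sign \((-1)^{n}\) arises, exactly as in the proof of Theorem~\ref{the:first_duality}, from permuting~\(D\) (of degree~\(-n\)) past~\(f\) (of degree \(i-n\)), which is legitimate because~\(D\) acts on the \(\Dual\)\nb-factor and~\(f\) on the other tensor factor. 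Thus \(\SPD^*\circ\SPD=\Id\) if and only if~\ref{second_duality_1} holds, and then the inverse of~\(\SPD^*\) can only be~\(\SPD\). On the other hand, by definition
\[
\SPD\circ\SPD^*(g)=(-1)^{in}\,\widetilde{\Theta}_\Third\inpro_\Third\overline{g}\inpro_\Dual D
\]
for \(g\in\Bic^i_{\Grd\ltimes\Tot}(\Third,\Dual\times_\Base\Target)\), so \(\SPD\circ\SPD^*=\Id\) is literally Condition~\ref{second_duality_2}. Putting these together: \(\SPD^*\) and~\(\SPD\) are mutually inverse precisely when~\ref{second_duality_1} and~\ref{second_duality_2} both hold; and if~\(\SPD^*\) is invertible, then setting \(\widetilde{\Theta}_\Third\defeq(\SPD^*)^{-1}(\Id_\Third)\) yields~\ref{second_duality_1}, hence \(\SPD^*\circ\SPD=\Id\), hence \(\SPD=(\SPD^*)^{-1}\), hence~\ref{second_duality_2}. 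This gives the equivalence together with the uniqueness of~\(\widetilde{\Theta}_\Third\) and the formula for \(\SPD\).

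For the implication that~\ref{second_duality_3} and~\ref{second_duality_4} together imply~\ref{second_duality_1} and~\ref{second_duality_2}, I would argue as in the last two displays of the proof of Theorem~\ref{the:first_duality}. Composing the commuting diagram in~\ref{second_duality_3} with~\(D\) on the \(\Dual\)\nb-factor collapses the flip, since the factor~\(\Dual\) that it swaps is carried to~\(\Base\), and turns \(\overline{\Third\times_\Tot\Theta}\inpro_\Dual D\) into \(\Third\times_\Tot(\Theta\inpro_\Dual D)=\Id_\Third\); the surviving sign is \((-1)^n\), so \(\overline{\widetilde{\Theta}_\Third}\inpro_\Dual D=(-1)^n\Id_\Third\), which is~\ref{second_duality_1}. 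For~\ref{second_duality_2}, substituting~\ref{second_duality_4} replaces \(\widetilde{\Theta}_\Third\inpro_\Third\overline{g}\) by \(\Theta\inpro_\Tot g\), so the left-hand side of~\ref{second_duality_2} becomes \((-1)^{in}\,\Theta\inpro_\Tot g\inpro_\Dual D\); since~\(\Theta\) carries the \(\Tot\)\nb-structure and \(\Theta\inpro_\Dual D=\Id_\Tot\), graded commutativity of exterior products rewrites this as \(g\inpro_\Tot(\Theta\inpro_\Dual D)=g\).

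The only genuine work is the bookkeeping needed to justify the ``permute past one another'' steps, namely the compatibility of the forgetful functor \(\overline{\,\cdot\,}\) and of the pull-back functors with the generalised product~\(\inpro_\Third\) of~\eqref{eq:gen_intersection_product} and the conventions of Remark~\ref{rem:drop_pull-back}, together with keeping track of the Koszul signs; here one also uses that the flip homeomorphism entering~\ref{second_duality_3} is natural for correspondences, which is part of Theorem~\ref{the:compose_correspondence}. I expect this to be the main obstacle, but it is entirely routine and is set up once and for all exactly as in~\cite{Emerson-Meyer:Dualities}.
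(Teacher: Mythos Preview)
Your proposal is correct and follows essentially the same route as the paper's own proof: both verify that~\ref{second_duality_1} is equivalent to \(\SPD^*\circ\SPD=\Id\) via graded commutativity, that~\ref{second_duality_2} is literally \(\SPD\circ\SPD^*=\Id\), and then deduce \ref{second_duality_1} and~\ref{second_duality_2} from \ref{second_duality_3} and~\ref{second_duality_4} by composing with~\(D\) and invoking \(\Theta\inpro_\Dual D=\Id_\Tot\). Your sign bookkeeping in the first step (noting \((-1)^{n^2+n}=1\) so that \(\SPD^*(\widetilde{\Theta}_\Third)=\Id_\Third\)) is in fact slightly more careful than the paper's, which writes \((-1)^n\Id_\Third\) there.
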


\begin{proof}
  Condition~\ref{second_duality_1} means that \(\SPD^*(\widetilde{\Theta}_\Third) = (-1)^n\Id_\Third\) in \(\Bic^0_\Grd(\Third,\Third)\).  Hence there is a unique~\(\widetilde{\Theta}_\Third\) satisfying~\ref{second_duality_1} if \(\SPD^*\) is invertible.  Define a map~\(\SPD\) as in~\eqref{eq:second_duality_inverse}.  The defining property of~\(\widetilde{\Theta}_\Third\) and the graded commutativity of exterior products yield
  \[
  \SPD^*\circ \SPD(f)
  = (-1)^{in} \overline{\widetilde{\Theta}_\Third \inpro_\Third f}
  \inpro_\Dual D
  = (-1)^n \overline{\widetilde{\Theta}_\Third} \inpro_\Dual D
  \inpro_\Third f
  = f
  \]
  for all \(f\in\Bic^{i-n}_\Grd(\Third,\Target)\).  Hence the inverse of \(\SPD^*\) can only be~\(\SPD\).  We compute
  \[
  \SPD\circ\SPD^*(g)
  = (-1)^{in}\widetilde{\Theta}_\Third \inpro_\Third
  \overline{g} \inpro_\Dual D
  \]
  for all \(g\in \Bic^i_{\Grd\ltimes\Tot}(\Third,\Dual\times_\Base\Target)\), so that~\ref{second_duality_2} is equivalent to \(\SPD\circ\SPD^*=\Id\).

  Now assume that there is \(\Theta\in \Bic^n_{\Grd\ltimes\Tot}(\Tot,\underline{\Tot}\times_\Base\Dual)\) with \(\Theta\inpro_\Dual D= \Id_\Tot\).  Then Condition~\ref{second_duality_3} implies~\ref{second_duality_1} and~\ref{second_duality_4} implies~\ref{second_duality_2}, using the graded commutativity of exterior products.
\end{proof}

\begin{definition}
  \label{def:symmetric_dual}
  Let \(n\in\Z\) and let \(\Tot\) be a \(\Grd\)\nb-space.  A \emph{symmetric dual} for~\(\Tot\) is a quadruple \((\Dual,D,\Theta,\widetilde{\Theta})\), where
  \begin{itemize}
  \item \(\Dual\) is a \(\Grd\ltimes\Tot\)-space,

  \item \(D\in\Bic^{-n}_\Grd(\Dual,\Base)\),

  \item \(\Theta\in \Bic^n_{\Grd\ltimes\Tot} (\Tot, \underline{\Tot}\times_\Base\Dual) \cong \Coh^n_{\Grd,\Tot}(\underline{\Tot}\times_\Base\Dual)\) (see Definition~\ref{def:underline_factor}), and

  \item \(\widetilde{\Theta} \in \Bic^n_{\Grd\ltimes\Tot} (\Tot, \Tot\times_\Base \underline{\Dual}) \cong \Coh^n_{\Grd,\Tot}(\Tot\times_\Base \underline{\Dual})\)
  \end{itemize}
  satisfy the following conditions:
  \begin{enumerate}[label=\textup{(\roman{*})}]
  \item \label{sym_dual_i} \(\Theta \inpro_\Dual D = \Id_\Tot\) in the ring \(\Bic^0_{\Grd\ltimes\Tot}(\Tot,\Tot) \cong \Coh_\Grd^0(\Tot)\);

  \item \label{sym_dual_ii} \(\overline{(\Dual\times_\Tot\Theta)} \inpro_{\Dual\times_\Base\Dual} \flip = (-1)^n \overline{(\Dual\times_\Tot\Theta)}\) in \(\Bic^n_\Grd(\Dual,\Dual\times_\Base\Dual)\), where~\(\flip\) denotes the permutation \((x,y)\mapsto (y,x)\) on~\(\Dual\times_\Base\Dual\);

  \item \label{sym_dual_iii} \(\overline{\widetilde{\Theta}} = (-1)^n \overline{\Theta}\) in \(\Bic^n_\Grd(\Tot,\Tot\times_\Base\Dual)\);

  \item \label{sym_dual_iv} \(\Theta \inpro_\Dual \overline{(\Dual\times_\Tot g)} = \Theta \inpro_\Tot g\) in \(\Bic^{i+n}_{\Grd\ltimes\Tot}\bigl(\Tot, \underline{\Tot}\times_\Base \Dual\times_\Base\Target) \cong \Coh^{i+n}_{\Grd,\Tot}(\underline{\Tot}\times_\Base \Dual\times_\Base\Target)\) for all \(g\in \Bic^i_{\Grd\ltimes\Tot}(\Tot, \Tot\times_\Base \Target) \cong \Coh^i_{\Grd,\Tot}(\Tot\times_\Base\Target)\) and all \(\Grd\)\nb-spaces~\(\Target\);

  \item \label{sym_dual_v} \(\widetilde{\Theta} \inpro_\Tot \overline{g} = \Theta \inpro_\Tot g\) in \(\Bic^{i+n}_{\Grd\ltimes\Tot}\bigl(\Tot, \underline{\Dual}\times_\Base (\Dual\times_\Base\Target)\bigr) \cong \Coh^{i+n}_{\Grd,\Tot}(\underline{\Dual}\times_\Base (\Dual\times_\Base\Target)\bigr)\) for all \(g\in \Bic^i_{\Grd\ltimes\Tot}(\Tot, \Dual\times_\Base \Target) \cong \Coh^i_{\Grd,\Tot}(\Dual\times_\Base \Target)\) and all \(\Grd\)\nb-spaces~\(\Target\).
  \end{enumerate}
\end{definition}

We have used Theorem~\ref{the:geometric_K} repeatedly to simplify \(\Bic^*_{\Grd\ltimes\Tot}(\Tot,\blank)\) to \(\Coh^*_{\Grd,\Tot}(\blank)\).  Most of the data and conditions above take place in \(\Coh^*_{\Grd,\Tot}(\blank)\).

\begin{theorem}
  \label{the:duality_restricted}
  If the space~\(\Tot\) has a symmetric dual and if every \(\Grd\)\nb-equivariant vector bundle over~\(\Source\) is subtrivial, then the maps in \eqref{eq:first_duality}, \eqref{eq:first_duality_inverse}, \eqref{eq:second_duality}, and~\eqref{eq:second_duality_inverse} for \(\Third=\Tot\) yield isomorphisms
  \begin{align*}
    \Coh^i_{\Grd,\Tot}(\Tot\times_\Base\Target) \cong
    \Bic^i_{\Grd\ltimes\Tot}(\Tot,\Tot\times_\Base\Target)
    &\cong \Bic^{i-n}_\Grd(\Dual,\Target),\\
    \Coh^i_{\Grd,\Tot}(\Dual\times_\Base\Target) \cong
    \Bic^i_{\Grd\ltimes\Tot}(\Tot,\Dual\times_\Base\Target)
    &\cong \Bic^{i-n}_\Grd(\Tot,\Target)
  \end{align*}
  for all \(\Grd\)\nb-spaces~\(\Target\).
\end{theorem}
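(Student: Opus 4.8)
The plan is to reduce the statement entirely to Theorems~\ref{the:first_duality} and~\ref{the:second_duality} applied with $\Third=\Tot$, using the four pieces of data $(\Dual,D,\Theta,\widetilde{\Theta})$ of the symmetric dual to supply the classes $\Theta_\Tot$ and $\widetilde{\Theta}_\Tot$ required there, and then to translate the resulting isomorphisms into the form stated by means of the identification~\eqref{eq:Bic_contains_Coh}, that is, $\Bic^i_{\Grd\ltimes\Tot}(\Tot,\blank)\cong\Coh^i_{\Grd,\Tot}(\blank)$ from Theorem~\ref{the:geometric_K}. Conditions~\ref{sym_dual_i}--\ref{sym_dual_v} of Definition~\ref{def:symmetric_dual} have been arranged precisely so that this translation is immediate, so the proof is a matter of bookkeeping with the identifications $\Dual\times_\Tot\Tot\cong\Dual$ and with the flip isomorphisms relating the $\Grd\ltimes\Tot$-spaces $\underline{\Tot}\times_\Base\Dual$, $\Tot\times_\Base\underline{\Dual}$, and $\underline{\Dual}\times_\Base\Tot$.

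First I would treat the first duality map with $\Third=\Tot$. Since $\Dual\times_\Tot\Tot\cong\Dual$ canonically, the group $\Bic^n_{\Grd\ltimes\Tot}\bigl(\Tot,\underline{\Tot}\times_\Base(\Dual\times_\Tot\Tot)\bigr)$ of Theorem~\ref{the:first_duality} is just $\Bic^n_{\Grd\ltimes\Tot}(\Tot,\underline{\Tot}\times_\Base\Dual)$, which is where the class $\Theta$ of the symmetric dual lives; so I set $\Theta_\Tot\defeq\Theta$. Condition~\ref{sym_dual_i} is exactly the hypothesis $\Theta\inpro_\Dual D=\Id_\Tot$ needed to invoke the sufficient criterion~\ref{first_duality_3}--\ref{first_duality_4} in Theorem~\ref{the:first_duality}. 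Under $\Dual\times_\Tot\Tot\cong\Dual$ the commuting square~\ref{first_duality_3} becomes precisely the equation~\ref{sym_dual_ii}, and~\ref{first_duality_4} becomes precisely~\ref{sym_dual_iv}. Hence Theorem~\ref{the:first_duality} applies and shows that $\PD^*$ of~\eqref{eq:first_duality} is an isomorphism for $\Third=\Tot$ and all $\Grd$-spaces~$\Target$, with inverse~\eqref{eq:first_duality_inverse} given by $\Theta_\Tot=\Theta$.

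Next I would do the same for the second duality map with $\Third=\Tot$. The class $\widetilde{\Theta}_\Tot$ required by Theorem~\ref{the:second_duality} lies in $\Bic^n_{\Grd\ltimes\Tot}(\Tot,\underline{\Dual}\times_\Base\Tot)$, which is carried onto the group $\Bic^n_{\Grd\ltimes\Tot}(\Tot,\Tot\times_\Base\underline{\Dual})$ containing $\widetilde{\Theta}$ by the evident flip homeomorphism, so I set $\widetilde{\Theta}_\Tot$ to be $\widetilde{\Theta}$ transported along this flip. Again~\ref{sym_dual_i} supplies the hypothesis $\Theta\inpro_\Dual D=\Id_\Tot$ of the sufficient criterion~\ref{second_duality_3}--\ref{second_duality_4}; after the flip identification the commuting triangle~\ref{second_duality_3} is condition~\ref{sym_dual_iii}, and~\ref{second_duality_4} is condition~\ref{sym_dual_v}. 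Thus Theorem~\ref{the:second_duality} shows that $\SPD^*$ of~\eqref{eq:second_duality} is an isomorphism for $\Third=\Tot$ and all~$\Target$, with inverse~\eqref{eq:second_duality_inverse}. The subtriviality assumption on $\Grd$-vector bundles over~$\Source$ enters exactly as in the discussion preceding these two theorems: it is what makes the forgetful functors $\Bic^*_{\Grd\ltimes\Tot}\to\Bic^*_\Grd$ appearing in~\eqref{eq:first_duality} and~\eqref{eq:second_duality} available and what lets one normalise the $\Grd$-vector bundles over~$\Tot$ so that Theorem~\ref{the:geometric_K} and~\eqref{eq:Bic_contains_Coh} apply.

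Finally I would precompose the two duality isomorphisms with the isomorphism~\eqref{eq:Bic_contains_Coh} for $\blank=\Tot\times_\Base\Target$ and $\blank=\Dual\times_\Base\Target$; this yields the two chains of isomorphisms in the statement. The only genuine work left is to check that the sign factors $(-1)^{in}$ and $(-1)^n$ appearing in~\eqref{eq:first_duality}--\eqref{eq:second_duality_inverse} and in conditions~\ref{sym_dual_i}--\ref{sym_dual_v} match on both sides of each identification; I expect this to be routine, since all the substantive content has already been isolated in Theorems~\ref{the:first_duality}, \ref{the:second_duality}, and~\ref{the:geometric_K}, and I do not anticipate any real obstacle beyond this matching of conventions.
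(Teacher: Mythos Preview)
Your proposal is correct and follows essentially the same route as the paper's proof: the paper simply observes that the conditions in Definition~\ref{def:symmetric_dual} are precisely \(\Theta\inpro_\Dual D=\Id_\Tot\) together with conditions~\ref{first_duality_3}--\ref{first_duality_4} of Theorems~\ref{the:first_duality} and~\ref{the:second_duality} for \(\Third=\Tot\), \(\Theta_\Tot=\Theta\), \(\widetilde{\Theta}_\Tot=\widetilde{\Theta}\), and then invokes those theorems and Theorem~\ref{the:geometric_K}. Your write-up is more explicit about the flip identification between \(\Tot\times_\Base\underline{\Dual}\) and \(\underline{\Dual}\times_\Base\Tot\), which the paper suppresses, but there is no substantive difference in strategy.
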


\begin{proof}
  The conditions for a symmetric dual in Definition~\ref{def:symmetric_dual} are \(\Theta\inpro_\Dual D= \Id_\Tot\) and the Conditions \ref{first_duality_3} and~\ref{first_duality_4} in Theorems \ref{the:first_duality} and~\ref{the:second_duality} with \(\Theta_\Tot=\Theta\) and \(\widetilde{\Theta}_\Tot=\widetilde{\Theta}\).  Hence the isomorphisms follow from Theorems \ref{the:geometric_K}, \ref{the:first_duality}, and~\ref{the:second_duality}.
\end{proof}

\begin{remark}
  \label{rem:duality_conditions_necessary}
  Theorem~\ref{the:duality_restricted} has a converse: the conditions in Definition~\ref{def:symmetric_dual} are necessary for the duality maps to be inverse to each other.  If~\(\Tot\) has a symmetric dual, then Conditions \ref{first_duality_3} and~\ref{first_duality_4} in Theorem~\ref{the:first_duality} are also necessary for the first duality isomorphism, and Conditions \ref{second_duality_3} and~\ref{second_duality_4} in Theorem~\ref{the:second_duality} are necessary for the second duality isomorphism.

  Analogous statements about duality isomorphisms in Kasparov theory are established in~\cite{Emerson-Meyer:Dualities}, and the proofs carry over almost literally.
\end{remark}

\begin{remark}
  \label{rem:duality_support}
  The variants of the duality isomorphisms with different support conditions considered in \cite{Emerson-Meyer:Dualities}*{Theorems 4.50 and 6.11} also work in our geometric theory, of course.  But we will not use these variants here.  We remark, however, that the constructions of symmetric duals below are sufficiently local to give duality isomorphisms with different support conditions as well.
\end{remark}

\subsection{Duality for certain \texorpdfstring{$\Grd$}{G}-spaces}
\label{sec:duality_smooth}

As before, \(\Grd\) is a numerably proper groupoid with object space~\(\Base\).  We are going to establish duality isomorphisms for spaces with certain properties.  The following definition lists our requirements:

\begin{definition}
  \label{def:tame}
  A \(\Grd\)\nb-space~\(\Tot\) is called \emph{normally non-singular} if there is a normally non-singular \(\Grd\)\nb-map from~\(\Tot\) to \(\Base\times[0,\infty)\) and if all \(\Grd\)\nb-vector bundles on~\(\Tot\) are subtrivial.
\end{definition}

Recall that a normally non-singular \(\Grd\)\nb-map from~\(\Tot\) to \(\Base\times[0,\infty)\) is a triple \(\NM\defeq (\VB,\Triv,\hat{f})\), where~\(\VB\) is a subtrivial \(\Grd\)\nb-vector bundle over~\(\Tot\), \(\Triv\) is a \(\Grd\)\nb-vector bundle over~\(\Base\), and~\(\hat{f}\) is an open embedding from~\(\total{\VB}\) into \(\total{\Triv}\times[0,\infty)\).

If there is a normally non-singular \(\Grd\)\nb-map from~\(\Tot\) to~\(\Base\), then there is one to \(\Base\times[0,\infty)\) as well because the map \(\Base\to\Base\times[0,\infty)\), \(\base\mapsto (\base,t)\), is the trace of a normally non-singular map for all \(t>0\).  Under some technical assumptions about equivariant vector bundles, normally non-singular maps \(\Tot\to\Base\) correspond to smooth structures on \(\Tot\times_\Base\Triv\) for some \(\Grd\)\nb-vector bundle~\(\Triv\) over~\(\Base\), and normally non-singular maps \(\Tot\to\Base\times[0,\infty)\) correspond to a structure of smooth \(\Grd\)\nb-manifold with boundary on \(\Tot\times_\Base\Triv\) for some \(\Grd\)\nb-vector bundle~\(\Triv\) over~\(\Base\).  The technical assumptions here are related to the finite orbit type assumption in the Mostow Embedding Theorem.  For instance, if~\(\Grd\) is a compact group, then a smooth \(\Grd\)\nb-manifold with boundary~\(\Tot\) admits a normal map to~\(\Base\times[0,\infty)\) if and only if it has finite orbit type.  But this assumption does not yet ensure that all \(\Grd\)\nb-vector bundles over~\(\Tot\) are subtrivial. For example, let \(\Tot\) be the integers and~\(\Grd\) be the circle with the trivial action on~\(\Tot\).  Using the identification \(\Tot \cong \widehat{\Grd}\) we get an obvious \(\Grd\)\nb-equivariant complex line bundle on~\(\Tot\) which is not subtrivial because it contains infinitely many inequivalentirreducible representations of~\(\Grd\).  For more information on non-singular spaces, see~\cite{Emerson-Meyer:Normal_maps}.

Let~\(\Tot\) be a no-singular \(\Grd\)\nb-space and let \(\NM\defeq (\VB,\tilde{\Triv},\hat{f})\) be a normally non-singular \(\Grd\)\nb-map from~\(\Tot\) to \(\Base\times[0,\infty)\).  \emph{We assume~\(\tilde{\Triv}\) to be \(\Coh\)\nb-oriented and with a well-defined dimension, and we let \(d\defeq \dim \tilde{\Triv}+1\).}  We impose no restrictions on the \(\Grd\)\nb-vector bundle~\(\VB\); thus the normally non-singular map~\(\NM\) need not be \(\Coh\)\nb-oriented.

\begin{remark}
  \label{rem:assumptions_automatic}
  Assume that any \(\Grd\)\nb-vector bundle over~\(\Base\) is a direct summand in an \(\Coh\)\nb-oriented one; this is automatic if~\(\Coh\) is cohomology, equivariant \(\K\)\nb-theory, or equivariant \(\KO\)\nb-theory.  Then a lifting of our original normally non-singular map replaces~\(\tilde{\Triv}\) by an \(\Coh\)\nb-oriented \(\Grd\)\nb-vector bundle.  Hence our assumption that~\(\tilde{\Triv}\) be \(\Coh\)\nb-oriented is no loss of generality.

  Similarly, if the fibre dimensions of~\(\Triv\) are merely bounded above by some \(N\in\N\), then lifting along the locally constant \(\Grd\)\nb-vector bundle with fibre \(\R^{N-\dim\Triv_\base}\) at~\(\base\) ensures that \(\dim \tilde{\Triv}_\base=N\) for all \(\base\in\Base\), without affecting the \(\Coh\)\nb-orientation.  Hence our assumptions on~\(\tilde{\Triv}\) can always be achieved by lifting~\(\NM\).
\end{remark}

We use~\(\hat{f}\) to identify~\(\total{\VB}\) with an open subset of \(\total{\tilde{\Triv}}\times[0,\infty)\) and thus drop~\(\hat{f}\) from our notation from now on.  Let \(\bd\VB\defeq \total{\VB}\cap \total{\tilde{\Triv}}\times\{0\}\).  This is an open subset of \(\total{\tilde{\Triv}}\times\{0\} \cong \total{\tilde{\Triv}}\).  Let
\[
\Triv\defeq \tilde{\Triv}\oplus\R
\qquad\text{and}\qquad
\Dual \defeq
\bigl(\bd\VB\times(-\infty,0]\bigr) \cup\total{\VB}.
\]
Then~\(\Dual\) is an open subset of~\(\total{\Triv}\).  Since~\(\total{\Triv}\) is \(\Coh\)\nb-oriented and \(d\)\nb-dimensional, the special normally non-singular submersion \(\Dual\opem\total{\Triv}\epi\Base\) provides \(D\in\Bic^{-d}_\Grd(\Dual,\Base)\).

There is a canonical deformation retraction from~\(\Dual\) onto
\(\total{\VB}\subseteq\Dual\):
\[
h\colon \Dual\times[0,1]\to\Dual,\qquad
h\bigl((\vb,s),t\bigr) \defeq (\vb,s\cdot t)
\quad\text{for \(\vb\in\bd\VB\), \(s\in(-\infty,0]\),
  \(t\in[0,1]\),}
\]
and \(h(\vb,t)=\vb\) for \(\vb\in\total{\VB}\), \(t\in[0,1]\).  We view~\(\Dual\) as a space over~\(\Tot\) using the map \(\proj{\VB}\circ h_0\colon \Dual\to\total{\VB}\epi\Tot\).

This construction simplifies if we use a normally non-singular \(\Grd\)\nb-map \((\VB,\Triv,\hat{f})\) from~\(\Tot\) to~\(\Base\).  Then \(\Dual\defeq \total{\VB}\), viewed as a space over~\(\Tot\) via \(\proj{\VB}\colon \total{\VB}\epi\Tot\); the pair \((\Triv,\hat{f})\) is a special normally non-singular submersion from~\(\Dual\) to~\(\Base\) and provides \(D\in\Bic^{-d}_\Grd(\Dual,\Base)\).

Specialising further, if~\(\Grd\) is a compact group, so that \(\Base=\pt\), then a normally non-singular map \(\Tot\to\Base\) is equivalent to a tubular neighbourhood \(\hat{f}\colon \total{\VB} \opem \R^n\) for an embedding \(\Tot\to\R^n\) with normal bundle~\(\VB\).

We have now described the ingredients \(\Dual\) and~\(D\) of the symmetric dual, which fix the duality isomorphisms \(\PD^*\) and \(\SPD^*\) by \eqref{eq:first_duality} and~\eqref{eq:second_duality} and thus determine the other ingredients \(\Theta\) and~\(\widetilde{\Theta}\).  We now describe these.

Since~\(\Dual\) is an open subset of~\(\total{\Triv}\), \(\Tot\times_\Base \Dual\) is an open subset of \(\Tot\times_\Base \total{\Triv}\).  The latter is the total space of the trivial \(\Grd\)\nb-vector bundle~\(\Triv^\Tot\) over~\(\Tot\).  The zero section of~\(\VB\) followed by the embedding \(\total{\VB} \defeq \Dual\opem \total{\Triv}\) provides a \(\Grd\)\nb-equivariant section of~\(\Triv^\Tot\).

\begin{lemma}
  \label{lem:sections_normal}
  Any section of~\(\Triv^\Tot\) with values in \(\Tot\times_\Base \Dual \subset \Tot\times_\Base \Triv = \Triv^\Tot\) is the trace of a \(\Grd\ltimes\Tot\)\nb-equivariant normally non-singular embedding from~\(\Tot\) to \(\underline{\Tot}\times_\Base\Dual\).
\end{lemma}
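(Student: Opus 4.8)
The plan is to take the ambient bundle~$\Triv^\Tot$ itself as the normal bundle of the embedding.  Indeed, by (the evident, possibly non-oriented version of) Definition~\ref{def:normal_map}, applied to the groupoid~$\Grd\ltimes\Tot$ (whose object space is~$\Tot$) and with the vector bundle over the object space taken to be~$0$, a normally non-singular $\Grd\ltimes\Tot$\nb-embedding from~$\Tot$ to $\underline{\Tot}\times_\Base\Dual$ with underlying $\Grd\ltimes\Tot$\nb-vector bundle~$\Triv^\Tot$ is nothing but an open embedding $\hat{g}\colon \total{\Triv^\Tot}\opem\underline{\Tot}\times_\Base\Dual$, and its trace is $\hat{g}\circ\zers{\Triv^\Tot}$.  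Here~$\Triv^\Tot$ is automatically (sub)trivial as a $\Grd\ltimes\Tot$\nb-vector bundle over~$\Tot$ since it is a vector bundle over the object space~$\Tot$, and its fibres have the well-defined dimension $d=\dim\tilde{\Triv}+1$.  Writing the given section as $\sigma(\tot)=\bigl(\tot,s(\tot)\bigr)$ for a $\Grd$\nb-equivariant map $s\colon\Tot\to\total{\Triv}$ with $s(\tot)\in\Dual$ lying in the fibre over~$\anchor(\tot)$, the task is therefore to produce such an open embedding~$\hat{g}$ with $\hat{g}\circ\zers{\Triv^\Tot}=\sigma$.

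First I would introduce the fibrewise translation $T\colon\total{\Triv^\Tot}\to\total{\Triv^\Tot}$, $(\tot,\vb)\mapsto\bigl(\tot,\vb+s(\tot)\bigr)$.  This is a homeomorphism with inverse $(\tot,\vb)\mapsto\bigl(\tot,\vb-s(\tot)\bigr)$; it commutes with the projection to~$\Tot$; and it is $\Grd$\nb-equivariant because~$s$ is $\Grd$\nb-equivariant and~$\Grd$ acts fibrewise linearly, so~$T$ is an isomorphism of $\Grd\ltimes\Tot$\nb-spaces over~$\Tot$.  Crucially $T\circ\zers{\Triv^\Tot}=\sigma$.  Since~$\sigma$ takes values in the open subset $\underline{\Tot}\times_\Base\Dual$ of~$\total{\Triv^\Tot}$, the set $N\defeq T^{-1}\bigl(\underline{\Tot}\times_\Base\Dual\bigr)$ is an open, $\Grd\ltimes\Tot$\nb-invariant neighbourhood of the image of the zero section.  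If~$N$ happened to be the total space of a vector bundle we would be done; in general I would further compose~$T$ with a $\Grd\ltimes\Tot$\nb-equivariant homeomorphism $\psi\colon\total{\Triv^\Tot}\congto N_r$ that is the identity on the zero section, where $N_r\defeq\{(\tot,\vb)\mid\norm{\vb}_\tot<r(\tot)\}\subseteq N$ is an open ball bundle for a suitable continuous invariant radius function~$r$ and a $\Grd$\nb-invariant inner product on~$\Triv^\Tot$ (which exists since~$\Grd$ is numerably proper); concretely $\psi(\tot,\vb)=\bigl(\tot,r(\tot)\vb/\sqrt{r(\tot)^2+\norm{\vb}_\tot^2}\bigr)$.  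Then $\hat{g}\defeq T\circ\psi$ is an open embedding of~$\total{\Triv^\Tot}$ onto the open subset $T(N_r)$ of $\underline{\Tot}\times_\Base\Dual$, and $\hat{g}\circ\zers{\Triv^\Tot}=T\circ\zers{\Triv^\Tot}=\sigma$, so $(\Triv^\Tot,\hat{g})$ is the desired normally non-singular embedding with trace~$\sigma$.

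The only step that is not purely formal is producing the continuous $\Grd$\nb-invariant function $r\colon\Tot\to(0,\infty)$ with $N_r\subseteq N$, so I expect this to be the main (though mild) obstacle.  This is a shrinking argument for the paracompact space~$\Tot$: the pointwise value of~$r$ is essentially controlled by the fibrewise distance from~$(\tot,0)$ to the complement of~$N$, and a continuous invariant minorant is then obtained by a partition of unity argument, using exactly the machinery for numerably proper actions that the paper already invokes for invariant metrics and invariant sections of equivariant vector bundles.  I would either dispatch it as routine or phrase it as the statement that any invariant open neighbourhood of the zero section of an equivariant vector bundle for a numerably proper action contains such an invariant open ball bundle (cf.\ \cite{Emerson-Meyer:Normal_maps}).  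Everything else --- the homeomorphism and equivariance properties of~$T$ and~$\psi$ and the fact that~$\psi$ fixes the zero section --- is immediate from the explicit formulas.
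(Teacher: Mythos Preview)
Your proof is correct and follows essentially the same strategy as the paper's: translate by the section and then radially contract into a small ball bundle that lands in $\Tot\times_\Base\Dual$, with the radius function obtained by a paracompactness/partition-of-unity argument and made invariant using numerably proper averaging. The paper writes the composite down as a single explicit formula $\iota(\tot,\triv)=(\tot,\, s(\tot)+\triv\cdot R(s(\tot),\triv))$ with $R(p,\triv)=\varrho(p)/(\norm{\triv}+1)$, and it defines the radius function $\varrho$ on all of~$\Dual$ rather than only on~$\Tot$; this extra generality is not needed for the lemma itself but is reused later in the proof of Theorem~\ref{the:duality_smooth}.
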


\begin{proof}
  This is an easy special case of the equivariant Tubular Neighbourhood Theorem, which we establish by hand.  We want to define a \(\Grd\ltimes\Tot\)\nb-equivariant open embedding \(\iota\colon \total{\Triv^\Tot} = \Tot\times_\Base \total{\Triv} \to \underline{\Tot}\times_\Base\Dual\) by
  \[
  \iota(\tot,\triv) \defeq \bigl(\tot, \zers{\VB}(\tot) + \triv\cdot R(\zers{\VB}(\tot),\triv)\bigr)
  \qquad \text{with}\quad
  R(\dual,\triv) = \frac{\varrho(\dual)}{\norm{\triv}+1};
  \]
  here \(\zers{\VB}\colon \Tot \to \total{\VB} \subseteq \total{\Triv}\) is the zero section of~\(\VB\) and \(+\) and~\(\cdot\) denote the addition and scalar multiplication in the vector bundle~\(\Triv\); \(\norm{e}\) is the norm from a \(\Grd\)\nb-invariant inner product on~\(\Triv\), which exists by \cite{Emerson-Meyer:Normal_maps} because~\(\Grd\) is numerably proper; and \(\varrho\colon \Dual\to(0,1]\) is a \(\Grd\)\nb-invariant function chosen such that \(\dual + \triv\in \Dual\) for \((\dual,\triv)\in \Dual\times_\Base \total{\Triv}\) with \(\norm{\triv}<\varrho(\dual)\) -- this ensures that \(\iota(\tot,\triv) \in \Tot\times_\Base\Dual\) for all \((\tot,\triv) \in \Tot\times_\Base \total{\Triv}\).

  For each \(\dual\in\Dual\), there are an open neighbourhood~\(U_\dual\) and \(\epsilon_\dual>0\) such that \(\dual' + \triv\in \Dual\) for \((\dual',\triv)\in \Dual\times_\Base \total{\Triv}\) with \(\dual'\in U_\dual\) and \(\norm{\triv}<\epsilon_\dual\) because~\(\Dual\) is open in~\(\total{\Triv^\Tot}\).  Since~\(\Dual\) is paracompact by our standing assumption on topological spaces, we may use a partition of unity to find a continuous function \(\varrho\colon \Dual\to(0,\infty)\) with \(\varrho(\dual)\le\epsilon_\dual\) for all \(\dual\in\Dual\).  We can replace~\(\varrho\) by a \(\Grd\)\nb-invariant function, using that~\(\Grd\) is numerably proper.
\end{proof}

Since the \(\Grd\)\nb-vector bundle~\(\Triv\) is \(\Coh\)\nb-oriented, so is the normally non-singular embedding \((\Triv^\Tot,\iota)\) from~\(\Tot\) to \(\Tot\times_\Base\Dual\).  We let
\[
\Theta\defeq (\Triv^\Tot,\iota)!\in
\Bic^d_{\Grd\ltimes\Tot}(\Tot, \underline{\Tot}\times_\Base \Dual)
\cong \Coh^d_{\Grd,\Tot}(\underline{\Tot}\times_\Base\Dual).
\]

Now we modify this construction to get~\(\widetilde{\Theta}\).  View~\(\total{\Triv^\Tot}\) as a space over~\(\Tot\) via
\[
\pi'\colon \total{\Triv^\Tot}\to\Tot,\qquad
(\tot,\triv)\mapsto \proj{\VB}\circ h_0\bigl(\zers{\VB}(\tot) +
e\cdot R(\zers{\VB}(\tot),\triv)\bigr).
\]
Let \(\Disk(\Triv^\Tot)\) be the unit disk bundle of~\(\Triv^\Tot\) with respect to the chosen metric.  The restrictions of both \(\proj{\Triv^\Tot}\) and~\(\pi'\) to \(\Disk(\Triv^\Tot)\) are proper maps to~\(\Tot\), that is, \(\Disk(\Triv^\Tot)\) is \(\Tot\)\nb-compact when we view~\(\total{\Triv^\Tot}\) as a space over~\(\Tot\) using one of these maps.  Even more, \(\Disk(\Triv^\Tot)\times[0,1]\) is \(\Tot\)\nb-compact with respect to
\[
\bar\pi\colon \total{\Triv^\Tot}\times[0,1] \to
\Tot,\qquad
(\tot,\triv,t) \mapsto \pi'(\tot,t\cdot\triv);
\]
this is a homotopy between \(\proj{\Triv^\Tot}\) and~\(\pi'\).

The \(\Coh\)\nb-orientation~\(\Thom_\Triv\) of~\(\Triv^\Tot\) may be represented by a cohomology class supported in \(\Disk(\Triv^\Tot)\).  Hence we get \(\bar\Thom_\Triv\in \Coh^*_{\Grd,\Tot} \bigl(\total{\Triv^\Tot}\times[0,1],\bar\pi\bigr)\) and, by restriction, \(\Thom_\Triv\in \Coh^*_{\Grd,\Tot}\bigl(\total{\Triv^\Tot},\pi'\bigr)\); here the maps on the right specify how to view the spaces on the left as spaces over~\(\Tot\).

Now we can describe \(\widetilde{\Theta} \in \Bic^d_{\Grd\ltimes\Tot}(\Tot, \Tot\times_\Base \underline{\Dual})\): it is the class of the special correspondence \((0,\total{\Triv^\Tot},\pi',\Thom_\Triv)\) from~\(\Tot\) to \(\Tot\times_\Base \underline{\Dual}\).  Here we view~\(\total{\Triv^\Tot}\) as an open subset of \(\Tot\times_\Base \underline{\Dual}\) using the map~\(\iota\) constructed above and as a space over~\(\Tot\) using~\(\pi'\).  As a class in \(\Coh^d_{\Grd,\Tot}(\Tot\times_\Base \underline{\Dual})\), we have \(\widetilde{\Theta}=\iota!(\Thom_\Triv)\).

\begin{theorem}
  \label{the:duality_smooth}
  Let~\(\Tot\) be a normally non-singular \(\Grd\)\nb-space.  Assume that any \(\Grd\)\nb-vector bundle over~\(\Base\) is contained in an \(\Coh\)\nb-oriented one.  The data \((\Dual,D,\Theta,\widetilde{\Theta})\) provides a symmetric dual for~\(\Tot\).  Hence there are duality isomorphisms
  \begin{align*}
    \Coh^i_{\Grd,\Tot}(\Tot\times_\Base\Target)
    &\cong \Bic^{i-n}_\Grd(\Dual,\Target),\\
    \Coh^i_{\Grd,\Tot}(\Dual\times_\Base\Target)
    &\cong \Bic^{i-n}_\Grd(\Tot,\Target)
  \end{align*}
  for all \(\Grd\)\nb-spaces~\(\Target\).
\end{theorem}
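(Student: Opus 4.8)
The plan is to verify that the data $(\Dual,D,\Theta,\widetilde\Theta)$ constructed above is a symmetric dual for~$\Tot$ in the sense of Definition~\ref{def:symmetric_dual} with $n=d$; the two asserted duality isomorphisms then follow from Theorem~\ref{the:duality_restricted}, using Theorem~\ref{the:geometric_K} to identify $\Bic^*_{\Grd\ltimes\Tot}(\Tot,\blank)$ with $\Coh^*_{\Grd,\Tot}(\blank)$. The hypothesis that $\Tot$ be normally non-singular supplies the subtriviality required in Theorem~\ref{the:duality_restricted}, while the assumption that every $\Grd$-vector bundle over~$\Base$ lies in an $\Coh$-oriented one is what makes the construction of $\Dual$, $D$, $\Theta$, $\widetilde\Theta$ legitimate (Remark~\ref{rem:assumptions_automatic}). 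Thus the whole proof is the routine-but-lengthy verification of conditions \ref{sym_dual_i}--\ref{sym_dual_v}; the computations parallel those carried out for Kasparov theory in~\cite{Emerson-Meyer:Dualities}, the only genuinely new ingredient being the explicit tubular neighbourhood~$\iota$ of Lemma~\ref{lem:sections_normal} and the deformation retraction~$h$ of~$\Dual$ onto $\total\VB\cong\Tot$.

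I would begin by putting $\Theta$ and $\widetilde\Theta$ into standard correspondence form. Thom modification of $\Theta=(\Triv^\Tot,\iota)!$ along~$\Triv^\Tot$ (Definition~\ref{def:trivial_Thom}, Example~\ref{exa:wrong-way_correspondence}) presents it as the special correspondence $(0,\total{\Triv^\Tot},\proj{\Triv^\Tot},\Thom_{\Triv^\Tot})$ with $\total{\Triv^\Tot}\opem\Tot\times_\Base\Dual$ via~$\iota$; $\widetilde\Theta$ is by definition the special correspondence $(0,\total{\Triv^\Tot},\pi',\Thom_\Triv)$ over the same open set; and $D$ is the special normally non-singular submersion $\Dual\opem\total\Triv\epi\Base$. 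With these descriptions, condition~\ref{sym_dual_i} becomes a composition of two wrong-way classes, so by Corollary~\ref{cor:compose_normal_cor} it equals the wrong-way class of the composite normally non-singular self-map of~$\Tot$ whose underlying open embedding of~$\total{\Triv^\Tot}$ is $(\tot,\triv)\mapsto\bigl(\tot,\zers\VB(\tot)+\triv\cdot R(\zers\VB(\tot),\triv)\bigr)$; this is fibrewise (over~$\Tot$) isotopic to the standard embedding, hence the map is equivalent to the lift of $\Id_\Tot$ along~$\Triv$, and the Thom class of~$\Triv^\Tot$ cancels against its inverse, leaving $1\in\Coh^0_\Grd(\Tot)$. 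Condition~\ref{sym_dual_iii} compares the two standard forms just written: the maps $\proj{\Triv^\Tot}$ and~$\pi'$ to~$\Tot$ are homotopic through the homotopy~$\bar\pi$ built into the construction, so Lemma~\ref{lem:bordism_homotopy} makes the correspondences bordant once the support conditions for the two copies of the Thom class of~$\Triv^\Tot$ are reconciled via~$\bar\Thom_\Triv$; the sign $(-1)^d$ is the one dictated by the orientation convention matching the normal bundle of~$\iota$ against~$\Thom_\Triv$, and is handled exactly as in~\cite{Emerson-Meyer:Dualities}.

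For conditions~\ref{sym_dual_iv} and~\ref{sym_dual_v}, which quantify over all $g$ and all coefficient spaces~$\Target$, the key observation is that under $\Bic^*_{\Grd\ltimes\Tot}(\Tot,\blank)\cong\Coh^*_{\Grd,\Tot}(\blank)$, left composition with $\Theta=(\Triv^\Tot,\iota)!$ is the operation ``restrict along~$\iota$ and multiply by~$\Thom_{\Triv^\Tot}$'', one using Example~\ref{exa:transverse_Mid2} to see that the relevant products are computed by the intersection-product recipe and Theorem~\ref{the:compose_transverse_correspondence}. Both sides of~\ref{sym_dual_iv} then become the pull-back of~$g$ to~$\total{\Triv^\Tot}$, once along $\proj{\Triv^\Tot}$ and once along $\total{\Triv^\Tot}\xrightarrow{\iota}\Tot\times_\Base\Dual\to\Dual\xrightarrow{\proj\VB\circ h_0}\Tot$, multiplied by the same Thom class; since $\zers\VB$ followed by $\proj\VB\circ h_0$ is $\Id_\Tot$, these two maps are homotopic and the pull-backs agree in~$\Coh^*$. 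Condition~\ref{sym_dual_v} is the same argument with $\widetilde\Theta$ in place of~$\Theta$, using that $\pi'$ itself factors through~$\iota$ followed by $\proj\VB\circ h_0$.

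The remaining condition~\ref{sym_dual_ii}, the flip symmetry $\overline{(\Dual\times_\Tot\Theta)}\inpro\flip=(-1)^d\overline{(\Dual\times_\Tot\Theta)}$ on~$\Dual\times_\Base\Dual$, I expect to be the main obstacle: here $\overline{(\Dual\times_\Tot\Theta)}$ is the wrong-way class of a tubular neighbourhood of a diagonally embedded copy of~$\Dual$ whose normal bundle is a pull-back of~$\Triv$, and one must check that the coordinate flip carries this tubular neighbourhood to itself while merely reversing the $\Triv$-directions, so that it acts on the Thom class by $(-1)^{\dim\Triv^\Tot}=(-1)^d$. This, together with the attendant bookkeeping of $\Coh$-orientations and Koszul signs throughout \ref{sym_dual_i}--\ref{sym_dual_v}, is where the real work lies; it transfers essentially verbatim from the corresponding lemma in~\cite{Emerson-Meyer:Dualities}, since it uses only the symmetry of the tubular-neighbourhood construction and no special feature of~$\Coh$. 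Once \ref{sym_dual_i}--\ref{sym_dual_v} are established, Theorem~\ref{the:duality_restricted} delivers the two displayed isomorphisms.
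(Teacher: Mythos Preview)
Your overall strategy matches the paper's exactly: verify conditions \ref{sym_dual_i}--\ref{sym_dual_v} of Definition~\ref{def:symmetric_dual} and then invoke Theorem~\ref{the:duality_restricted}. Your treatments of \ref{sym_dual_i}, \ref{sym_dual_iii}, and \ref{sym_dual_iv} are essentially the paper's, and you correctly flag \ref{sym_dual_ii} as the technical heart; the paper handles it by first deforming $\Dual\times_\Tot\iota$ to the symmetric form $\iota_\Dual(\dual,\triv)=(\dual,\dual+\triv\cdot R(\dual,\triv))$ and then applying the explicit one-parameter family $A_t=\bigl(\begin{smallmatrix}1+t&-t\\t&1-t\end{smallmatrix}\bigr)$ to produce an isotopy $\kappa$ from $\iota_\Dual$ to $\flip\circ\iota_\Dual\circ\Psi$ with $\Psi(\triv)=-\triv$.

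The one place your sketch goes astray is condition~\ref{sym_dual_v}. It is \emph{not} the same homotopy argument as~\ref{sym_dual_iv}: the two products $\widetilde\Theta\inpro_\Tot\overline g$ and $\Theta\inpro_\Tot g$ land in $\Bic^*_{\Grd\ltimes\Tot}$ with the two $\Dual$-factors oppositely underlined, so comparing them forces a coordinate flip on $\Dual\times_\Base\Dual$. The paper's proof of~\ref{sym_dual_v} therefore reuses the very rotation isotopy~$\kappa$ built for~\ref{sym_dual_ii}, and the sign $(-1)^d$ in~$\widetilde\Theta$ is there precisely to cancel the $(-1)^d$ produced by~$\Psi$. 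Your observation that ``$\pi'$ factors through $\iota$ followed by $\proj\VB\circ h_0$'' is correct but does not by itself bridge $\overline g$ and~$g$; you should expect~\ref{sym_dual_v} to cost about as much as~\ref{sym_dual_ii}, not as little as~\ref{sym_dual_iv}.
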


\begin{proof}
  We must check Conditions \ref{sym_dual_i}--\ref{sym_dual_v} in Definition~\ref{def:symmetric_dual}.  The duality isomorphisms then follow from Theorem~\ref{the:duality_restricted}.

  The composition of~\(\iota\) with the open embedding \(\Dual\opem\total{\Triv}\) is isotopic to the identity map on \(\total{\Triv^\Tot}\).  Thus the computation in \cite{Emerson-Meyer:Normal_maps}*{Example 4.25} yields Condition~\ref{sym_dual_i}:
  \[
  \Theta\inpro_\Dual D = \Id_\Tot
  \qquad \text{in \(\Bic^0_{\Grd\ltimes\Tot}(\Tot,\Tot)\).}
  \]

  Condition~\ref{sym_dual_ii} asserts that \(\Dual\times_\Tot \Theta\in \Bic^d_\Grd(\Dual,\Dual\times_\Base \Dual)\) is rotation invariant up to the sign \((-1)^d\).  This amounts to an assertion about the map~\(\Dual\times_\Tot\iota\).  First we construct an isotopy from \(\Dual\times_\Tot\iota\) to a slightly more symmetric map.  By definition,
  \[
  \Dual\times_\Tot\iota\colon
  \Dual\times_\Tot \Tot\times_\Base \total{\Triv}
  \cong \Dual\times_\Base \total{\Triv}
  \opem \Dual\times_\Base \Dual
  \]
  maps
  \[
  (\dual,\triv) \mapsto
  \bigl(\dual, 0\cdot h_0(\dual)+ \triv\cdot R(0\cdot h_0(\dual),\triv)\bigr),
  \]
  where \(h_0\colon \Dual \to \total{\VB}\) is the canonical retraction and~\(\cdot\) is the scalar multiplication in~\(\VB\), that is, \(0\cdot\vb = \zers{\VB}\circ\proj{\VB}(\vb)\).  The open embeddings
  \[
  (\dual,\triv) \mapsto
  \bigl(\dual, t\cdot h_t(\dual)+ \triv\cdot R(t\cdot h_t(\dual),\triv)\bigr)
  \]
  for \(t\in[0,1]\) provide an isotopy from \(\Dual\times_\Tot\iota\) to the map
  \[
  \iota_\Dual\colon \Dual\times_\Base \total{\Triv}
  \opem \Dual\times_\Base \Dual,\qquad
  (\dual,\triv) \mapsto
  \bigl(\dual, \dual + \triv\cdot R(\dual,\triv)\bigr).
  \]

  The matrices
  \[
  A_t \defeq \begin{pmatrix}1+t&-t\\t&1-t\end{pmatrix},\qquad
  A_t^{-1} \defeq \begin{pmatrix}1-t&t\\-t&1+t\end{pmatrix}
  \]
  are inverse to each other.  Applying~\(A_t\) for \(t\in[0,1]\) to~\(\iota_\Dual\) provides an isotopy
  \[
  (\dual,\triv) \mapsto
  A_t\cdot \begin{pmatrix}
    \dual\\\dual+\triv\cdot R(\dual,\triv)
  \end{pmatrix}
  = \begin{pmatrix}
    \dual-\triv\cdot R(\dual,\triv) t\\
    \dual+\triv\cdot R(\dual,\triv)\cdot (1-t)
  \end{pmatrix}
  \]
  of open embeddings \(\total{\Triv^\Dual}\opem \total{\Triv}\times_\Base\total{\Triv}\) between~\(\iota_\Dual\) and \(\flip\circ\iota_\Dual\circ\Psi\), where \(\Psi(\dual,\triv) \defeq (\dual,-\triv)\).  These maps take values in \(\Dual\times_\Base\Dual\) by construction of \(R(\dual,\triv)\).  The isotopies \(\Dual\times_\Base \iota \sim \iota_\Dual \sim \flip\circ\iota_\Dual\circ\Psi \sim \flip\circ(\Dual\times_\Base\iota)\circ\Psi\) constructed above provide an open embedding
  \begin{equation}
    \label{eq:opem_isotopy_ii_and_v}
    \kappa\colon \total{\Triv^\Dual}\times[0,1]
    \opem\Dual\times_\Base\Dual\times[0,1]
  \end{equation}
  with \(\kappa_0 =\Dual\times_\Tot\iota\) and \(\kappa_1 = \flip \circ (\Dual\times_\Tot\iota)\circ \Psi\).  The linear isomorphism~\(\Psi\) has the class \((-1)^{\dim \Triv}=(-1)^d\) in \(\Bic^0_{\Grd\ltimes\Tot}(\Tot\times_\Base \Triv, \Tot\times_\Base\Triv)\).  The isotopy of open embeddings~\(\kappa\) together with the \(\Coh\)\nb-orientation \(\tau_\Triv\in \Coh^d_{\Grd,\Dual}(\total{\Triv^\Dual})\) provides a special bordism between \(\Dual\times_\Tot\Theta\) and \((-1)^d (\Dual\times_\Tot\Theta) \inpro \flip\).  This finishes the proof of Condition~\ref{sym_dual_ii}.

  Condition~\ref{sym_dual_iii} asserts that the forgetful functor to~\(\Bic_\Grd\) maps \(\Theta\) and \((-1)^d\widetilde{\Theta}\) to the same element in \(\Bic^d_\Grd(\Tot, \Tot\times_\Base\Dual)\).  The homotopy~\(\bar\pi\) between the projection \(\bar\pi_1=\pi'\colon \total{\Triv^\Tot}\to\Tot\) and the standard projection \(\bar\pi_0=\proj{\Triv^\Tot}\) and~\(\bar\Thom_\Triv\) provide a special bordism between \((-1)^d\widetilde{\Theta} \defeq (\total{\Triv^\Tot},0,\pi',\Thom_\Triv)\) and \((\total{\Triv^\Tot},0,\proj{\Triv^\Tot},\Thom_\Triv)\).  The latter is the special correspondence associated to~\(\Theta\).  This proves Condition~\ref{sym_dual_iii}.

  Condition~\ref{sym_dual_iv} asserts \(\Theta \inpro_\Dual \overline{(\Dual\times_\Tot g)} = \Theta \inpro_\Tot g\) for \(g\in \Bic^i_{\Grd\ltimes\Tot}(\Tot, \Tot\times_\Base \Target)\) for any \(\Grd\)\nb-space~\(\Target\).  We may pull back~\(g\) to a class in \(\Bic^i_{\Grd\ltimes (\Tot\times_\Base\Dual)}(\Tot\times_\Base\Dual, \Tot\times_\Base\Dual\times_\Base\Target)\) along the coordinate projection \(\Tot\times_\Base\Dual\to\Tot\) and along the projection \(\Tot\times_\Base\Dual\to\Tot\), \((\tot,\dual)\mapsto 0\cdot h_0(\dual)\).  The products \(\Theta \inpro_\Tot g\) and \(\Theta \inpro_\Dual \overline{(\Dual\times_\Tot g)}\) are the composition products of~\(\Theta\) with these two pull-backs of~\(g\).  These composites agree because the two maps \(\Tot\times_\Base\Dual\rightrightarrows\Tot\) restrict to homotopic maps on the range of the embedding~\(\iota\) (use~\(\bar\pi\)).  The pull-back of~\(g\) along this homotopy provides a bordism of correspondences that connects the two products in question.  This finishes the proof of Condition~\ref{sym_dual_iv}.

  Condition~\ref{sym_dual_v} asserts \(\widetilde{\Theta} \inpro_\Tot \overline{\Kclass} = \Theta \inpro_\Tot \Kclass\) for \(\Kclass\in \Bic^i_{\Grd\ltimes\Tot}(\Tot, \Dual\times_\Base \Target) \cong \Coh^i_{\Grd,\Tot}(\Dual\times_\Base\Target)\) for any \(\Grd\)\nb-space~\(\Target\).  We may rewrite these products as composition products:
  \begin{equation}
    \label{eq:composition_v}
    \widetilde{\Theta} \inpro_\Tot \overline{\Kclass}
    = \widetilde{\Theta} \inpro_{\Tot\times_\Base\underline{\Dual}}
    (\Kclass\times_\Base \Dual),\qquad
    \Theta \inpro_\Tot \Kclass =
    \Theta \inpro_{\underline{\Tot}\times_\Base\Dual}
    (\Kclass\times_\Base\Dual)
  \end{equation}
  with \(\Kclass\times_\Base\Dual \in \Coh^i_{\Grd,\Tot\times_\Base\Dual} (\Dual\times_\Base\Target\times_\Base\Dual) \cong \Bic^i_{\Grd\ltimes (\Tot\times_\Base\Dual)} (\Tot\times_\Base\Dual, \Dual\times_\Base\Target\times_\Base\Dual)\).  The products in~\eqref{eq:composition_v} lie in \(\Bic^{i+n}_{\Grd\ltimes\Tot}(\Tot, \Dual\times_\Base\Target\times_\Base\underline{\Dual})\) and \(\Bic^{i+n}_{\Grd\ltimes\Tot}(\Tot, \underline{\Dual}\times_\Base\Target\times_\Base\Dual)\), respectively, so that we must, more precisely, show that
  \[
  \widetilde{\Theta} \inpro (\Kclass\times_\Base \Dual) =
  \Theta \inpro (\Kclass\times_\Base\Dual) \inpro \flip.
  \]

  The isomorphism in Theorem~\ref{the:geometric_K} replaces \(\Kclass\times_\Base\Dual\in \Coh^i_{\Grd,\Tot\times_\Base\Dual} (\Dual\times_\Base\Target\times_\Base\Dual)\) by the special correspondence \((0,\Dual\times_\Base\Target\times_\Base\Dual,\mapl, \Kclass\times_\Base\Dual)\) with
  \[
  \mapl\colon \Dual\times_\Base\Target\times_\Base\Dual \to
  \Tot\times_\Base\Dual,\qquad
  (\dual_1,\target,\dual_2)\mapsto (\proj{\VB} h_0(\dual_1),\dual_2).
  \]
  Recall that \(\Theta\) and~\(\widetilde{\Theta}\) are represented by the \(\Grd\ltimes\nobreak\Tot\)-equivariant special correspondences \((\total{\Triv^\Tot},0,\proj{\Triv^\Tot},\Thom_\Triv)\) and \((\total{\Triv^\Tot},0,\pi',(-1)^d\Thom_\Triv)\), where we use~\(\iota\) to view \(\total{\Triv^\Tot}\) as an open subset of~\(\Tot\times_\Base\Dual\).

  Having represented our bivariant cohomology classes by special correspondences, we may use the definition to compute the composition products in~\eqref{eq:composition_v}.  In both cases, the \(\Grd\)\nb-space in the middle is \(\Midd\defeq \total{\Triv}\times_\Base\Dual\times_\Base \Target\), viewed as an open subset of \(\Dual\times_\Base\Target\times_\Base\Dual\) via
  \begin{multline*}
    \lambda\colon
    \Midd\defeq \total{\Triv}\times_\Base\Dual\times_\Base
    \Target \opem
    \Dual\times_\Base\Target\times_\Base\Dual,\\
    (\triv,\dual,\target)\mapsto \bigl(\dual,\target,
    0\cdot h_0(\dual)+ e\cdot R(0\cdot h_0(\dual),e)\bigr),
  \end{multline*}
  and the \(\Coh\)\nb-class on~\(\Midd\) is \(\Thom_\Triv\cdot\lambda^*(\Kclass)\).  But the maps to~\(\Tot\) are different: for \(\Theta \inpro_\Tot \Kclass\), we use \(\underline{\Dual}\times_\Base\Target\times_\Base\Dual\) and thus view~\(\Midd\) as a space over~\(\Tot\) via \((\triv,\dual,\target)\mapsto \proj{\VB}h_0(\dual)\); for \(\widetilde{\Theta} \inpro_\Tot \overline{\Kclass}\), we use \(\Dual\times_\Base\Target\times_\Base\underline{\Dual}\) and thus view~\(\Midd\) as a space over~\(\Tot\) via \((\triv,\dual,\target)\mapsto \proj{\VB}h_0\bigl( 0\cdot h_0(\dual) + \triv\cdot R(0\cdot h_0(\dual),\triv)\bigr)\).  Thus, we must compose one of the copies of~\(\lambda\) with the flip isomorphism
  \[
  \flip\colon
  \underline{\Dual}\times_\Base\Target\times_\Base\Dual
  \xrightarrow{\cong}
  \Dual\times_\Base\Target\times_\Base\underline{\Dual},\qquad
  (\dual_1,\target,\dual_2)\mapsto
  (\dual_2,\target,\dual_1)
  \]
  before we can compare them.

  Now we use the isotopy of open embeddings~\(\kappa\) in~\eqref{eq:opem_isotopy_ii_and_v}.  It connects \(\lambda=\kappa_0\times_\Base\Target\) and \(\flip\circ\lambda\circ\Psi=\kappa_1\times_\Base\Target\), where~\(\Psi\) maps \(\triv\mapsto-\triv\) on~\(\Triv\).  The \(\Coh\)\nb-cohomology class \(\tau_\Triv\cdot \kappa^*(\Kclass)\) on \(\total{\Triv}\times_\Base\Dual\times_\Base\Target\times[0,1]\) has \(\Tot\)\nb-compact support with respect to the map \(\underline{\Dual}\times_\Base\Target\times_\Base\Dual\to\Tot\).  This produces a special bordism between the products \(\Theta \inpro_\Tot \Kclass\) and \(\widetilde{\Theta} \inpro_\Tot \overline{\Kclass}\).  Notice that the sign~\((-1)^d\) in the definition of~\(\widetilde{\Theta}\) cancels the sign produced by the automorphism~\(\Psi\) above.
\end{proof}

\begin{remark}
  \label{rem:duality_locally_trivial}
  With some additional effort, it can be shown more generally that the maps \(\PD^*\) and \(\SPD^*\) provide isomorphisms
  \begin{align*}
    \Bic_{\Grd\ltimes\Tot}^i
    (\Third,\Dual\times_\Base\Target) &\cong
    \Bic_\Grd^{i-n}(\Third,\Target),\\
    \Bic^i_{\Grd\ltimes\Tot}(\Third,
    \Tot\times_\Base\Target)&\cong
    \Bic^{i-n}_\Grd(\Dual\times_\Tot\Third, \Target),
  \end{align*}
  provided~\(\Tot\) is a non-singular \(\Grd\)\nb-space with boundary, \(\Target\) is any \(\Grd\)\nb-space, and~\(\Third\) is a \emph{locally trivial} \(\Grd\ltimes\Tot\)\nb-space.  Local triviality means that there is a neighbourhood~\(W\) of the diagonal in~\(\Tot\times_\Base\Tot\) such that the pull-backs of~\(\Third\) to~\(W\) along the two coordinate projections \(W\to\Tot\) become \(\Grd\)\nb-equivariantly homeomorphic.  The same condition is used in~\cite{Emerson-Meyer:Dualities} to construct duality isomorphisms in equivariant Kasparov theory.
\end{remark}

\section{Comparison to Kasparov theory}
\label{sec:compare_Kasparov}

Now we restrict attention to the case where our cohomology theory is equivariant \(\K\)\nb-theory or, more precisely, the representable equivariant \(\K\)\nb-theory for locally compact \(\Grd\)\nb-spaces for a proper locally compact groupoid~\(\Grd\), see~\cite{Emerson-Meyer:Equivariant_K}.  We denote the topological bivariant \(\K\)\nb-theory defined above by
\[
\GKK^*_\Grd(\Source,\Target)
\defeq \Bic^*_\Grd(\Source,\Target)
\qquad\text{for \(\Coh=\RK\)}.
\]
We want to compare it to the equivariant Kasparov theory defined in~\cite{LeGall:KK_groupoid}.  In order for both theories to be defined, we require \(\Source\) and~\(\Target\) to be second countable, locally compact Hausdorff spaces and~\(\Grd\) to be a proper, second countable, locally compact, Hausdorff groupoid with Haar system.  Then~\(\Grd\) is numerably proper by \cite{Emerson-Meyer:Normal_maps}*{Lemma 2.16}.

\begin{remark}
  \label{rem:vb_K}
  It follows from~\cite{Emerson-Meyer:Equivariant_K} that if~\(\Grd\) is a proper groupoid, \(\Source\) is a cocompact \(\Grd\)\nb-space, and~\(\Target\) is a \(\Grd\)\nb-space with enough \(\Grd\)\nb-vector bundles, then \(\RK^*_{\Grd, \Source}(\Target)\) can be decribed in terms of triples \((\VB_+, \VB_-, \varphi)\), where~\(\VB_\pm\) are \(\Grd\)\nb-vector bundles on~\(\Target\) and \(\varphi\colon \VB_+ \to \VB_-\) is an equivariant vector bundle map that is an isomorphism off an \(\Source\)\nb-compact \(\Grd\)\nb-invariant closed subset.  If~\(\Target\) is also a smooth \(\Grd\)\nb-manifold, then a simple argument with crossed products implies that the vector bundles and~\(\varphi\) can be taken to be smooth.  Combining these observations with Theorem~\ref{the:smooth_duality} gives a description of \(\KK^\Grd_*(\CONT_0(\Source), \CONT_0(\Target))\) in terms of smooth correspondences whose \(\K\)\nb-theory data are encoded by smooth \(\Grd\)\nb-equivariant vector bundles on~\(\Target\) which are smoothly isomorphic off an \(\Source\)\nb-compact set.  This is more in line with the traditional definitions.
\end{remark}

\begin{theorem}
  \label{the:compare_to_KK}
  Let~\(\Grd\) be a proper, second countable, locally compact groupoid with Haar system and let \(\Source\) and~\(\Target\) be second countable, locally compact \(\Grd\)\nb-spaces.  There is a natural transformation \(\GKK^*_\Grd(\Source,\Target) \to \KK^\Grd_* \bigl(\CONT_0(\Source), \CONT_0(\Target)\bigr)\) that preserves gradings, composition products, and exterior products.  It is an isomorphism if~\(\Source\) has a symmetric dual in \(\GKK^*\).
\end{theorem}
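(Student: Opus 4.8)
The plan is to define the transformation on cycles, check that it descends to a structure-preserving functor, and then deduce the isomorphism statement from the duality isomorphism of Theorem~\ref{the:duality_restricted} together with its analytic counterpart from \cite{Emerson-Meyer:Dualities}. On cycles I would send a correspondence $(\Midd,\mapl,\mapr,\Kclass)$ to the Kasparov product $\kappa(\Kclass)\otimes_{\CONT_0(\Midd)}\mapr!$, where $\kappa(\Kclass)\in\KK^\Grd_*\bigl(\CONT_0(\Source),\CONT_0(\Midd)\bigr)$ is the class that $(\mapl,\Kclass)$ determines under the identification $\RK^*_{\Grd,\Source}(\Midd)\cong\KK^\Grd_*\bigl(\CONT_0(\Source),\CONT_0(\Midd)\bigr)$ of \cite{Emerson-Meyer:Equivariant_K}, and $\mapr!\in\KK^\Grd_*\bigl(\CONT_0(\Midd),\CONT_0(\Target)\bigr)$ is the topological wrong-way element of \cite{Emerson-Meyer:Normal_maps}. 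Additivity, grading-preservation, and invariance under equivalence of normally non-singular maps are built in. For a Thom modification $\Cor^\VB$ the new left datum is $\kappa\bigl(\Thom_\VB(\Kclass)\bigr)=\kappa(\Kclass)\otimes(\zers{\VB})!$ and the new right datum is $(\mapr\circ\proj{\VB})!=(\proj{\VB})!\otimes\mapr!$; since the composite of $(\zers{\VB})!$ and $(\proj{\VB})!$ is $\Id_{\CONT_0(\Midd)}$ (they are mutually inverse Thom isomorphisms), the product is unchanged, so the assignment is Thom-invariant.

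For bordism invariance I would first use Theorem~\ref{the:special_bordism} to reduce to special bordisms, which are special correspondences from~$\Source$ to $\Target\times[0,1]$; since evaluation at the two endpoints gives homotopic $*$-homomorphisms $\CONT_0(\Target\times[0,1])\to\CONT_0(\Target)$, and since the wrong-way element of a special submersion is local over the target (so that restricting along $\Target\times\{t\}$ recovers the boundary correspondence, by the base-change property of wrong-way maps for submersions), homotopy invariance of $\KK$ gives the two boundaries the same image. Thus the transformation $T$ descends to $\GKK^*_\Grd(\Source,\Target)$. Multiplicativity — that $T$ preserves composition products and exterior products — is the main computational step. Again using Theorem~\ref{the:special_bordism} I would reduce to special correspondences (and special bordisms), where $\inpro$ is given by the explicit fibre-product construction of \S\ref{sec:compose_correspondences}: the wrong-way part is the special submersion $\Midd_1\times_\Target\Midd_2\opem\total{\Triv^\Third}\epi\Third$, which factors compatibly with the two submersions by functoriality of topological wrong-way maps, while the $\K$-theory part $\Kclass_1\otimes_\Target\Kclass_2$ maps to the Kasparov product over $\CONT_0(\Target)$ of the two classes; assembling these gives $T(\Cor_1\inpro\Cor_2)=T(\Cor_1)\otimes_{\CONT_0(\Target)}T(\Cor_2)$, and the exterior product is handled the same way. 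One also records, for use below, that $T$ commutes with pull-back functors and with the forgetful functors $\Bic^*_{\Grd\ltimes\Tot}\to\Bic^*_\Grd$, and that over the groupoid $\Grd\ltimes\Source$ with source space the object space~$\Source$, the normal form in the proof of Theorem~\ref{the:geometric_K} shows that $T$ is exactly the canonical identification $\GKK^*_{\Grd\ltimes\Source}(\Source,\Other)\cong\RK^*_{\Grd,\Source}(\Other)\cong\KK^{\Grd\ltimes\Source}_*\bigl(\CONT_0(\Source),\CONT_0(\Other)\bigr)$; in particular it is then an isomorphism.

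For the last assertion, let $(\Dual,D,\Theta,\widetilde{\Theta})$ be a symmetric dual for~$\Source$ in~$\GKK^*$. Since the conditions of Definition~\ref{def:symmetric_dual} are phrased entirely in terms of composition products, exterior products, the flip, forgetful functors, gradings, and the identification of Theorem~\ref{the:geometric_K} — all of which $T$ respects by the previous paragraph — the images $\bigl(\Dual,T(D),T(\Theta),T(\widetilde{\Theta})\bigr)$ form a symmetric dual for~$\Source$ in Kasparov theory. By the analogue of Theorem~\ref{the:duality_restricted} established in \cite{Emerson-Meyer:Dualities}, the analytic second duality map $\SPD^*_{\mathrm{an}}\colon\KK^{\Grd\ltimes\Source}_*\bigl(\CONT_0(\Source),\CONT_0(\Dual\times_\Base\Target)\bigr)\to\KK^\Grd_*\bigl(\CONT_0(\Source),\CONT_0(\Target)\bigr)$ is an isomorphism, and Theorem~\ref{the:duality_restricted} gives the same for the topological second duality map $\SPD^*$. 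Because $\SPD^*$ is a composition product with~$D$ followed by a forgetful functor, $T$ intertwines $\SPD^*$ with~$\SPD^*_{\mathrm{an}}$, fitting into a commuting square whose remaining two sides are the isomorphism $\SPD^*$ and the base-case isomorphism $\GKK^*_{\Grd\ltimes\Source}(\Source,\Dual\times_\Base\Target)\xrightarrow{\cong}\KK^{\Grd\ltimes\Source}_*\bigl(\CONT_0(\Source),\CONT_0(\Dual\times_\Base\Target)\bigr)$ from the previous paragraph. Chasing the square shows that $T\colon\GKK^*_\Grd(\Source,\Target)\to\KK^\Grd_*\bigl(\CONT_0(\Source),\CONT_0(\Target)\bigr)$ is an isomorphism. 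I expect the main obstacles to be the verification of multiplicativity on special correspondences and the routine but delicate bookkeeping needed to confirm that $T$ genuinely respects every piece of structure appearing in Definition~\ref{def:symmetric_dual}.
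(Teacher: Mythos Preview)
Your proposal is correct and follows the same overall strategy as the paper: define the transformation, check it is well-defined and compatible with products, then transport the symmetric dual to Kasparov theory and invoke the analytic duality of \cite{Emerson-Meyer:Dualities} together with the base case \(\GKK^*_{\Grd\ltimes\Source}(\Source,\blank)\cong\RK^*_{\Grd,\Source}(\blank)\).

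The differences are organisational. The paper defines the transformation only on \emph{special} correspondences and uses Theorem~\ref{the:special_bordism} to reduce well-definedness to two simple checks: invariance under Thom modification by \emph{trivial} bundles over~\(\Base\) (immediate from composition of Thom classes) and under special bordism (a special bordism is literally a special correspondence to \(\Target\times[0,1]\), hence gives a homotopy in \(\KK\)). You instead define the map on all correspondences and prove invariance under the original generating relations; your Thom-modification argument via \((\zers{\VB})!\) and \((\proj{\VB})!\) is correct, and your bordism reduction works, though it really uses the \emph{proof} of Theorem~\ref{the:special_bordism} (Thom-modifying a general bordism along its normal bundle yields a special bordism between Thom modifications of its boundaries), not just its statement.

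For multiplicativity, the paper is more explicit than your sketch: it factors any special correspondence as \((\mapl,\Kclass)^*\inpro\mapr!\), checks functoriality separately on each type, and then handles the one nontrivial mixed product \(\NM!\inpro(\mapl,\Kclass)^*\) by a commuting square in \(\KK^\Grd\) expressing naturality of the Thom isomorphism. Your assertion that ``\(\Kclass_1\otimes_\Target\Kclass_2\) maps to the Kasparov product over \(\CONT_0(\Target)\)'' is not literally a product of the given classes (they lie in \(\KK\bigl(\CONT_0(\Source),\CONT_0(\Midd_1)\bigr)\) and \(\KK\bigl(\CONT_0(\Target),\CONT_0(\Midd_2)\bigr)\)); unpacking it leads exactly to the paper's mixed-case computation, which is what you flagged as the main obstacle. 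The duality argument is the same in both versions, including the point that conditions~(iv) and~(v) in Definition~\ref{def:symmetric_dual} quantify over classes in \(\Coh^*_{\Grd,\Tot}\), which are the same in both theories.
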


\begin{proof}
  Let \(\Cor \defeq (\Triv,\Midd,\mapl,\Kclass)\) be a special correspondence from~\(\Source\) to~\(\Target\).  The equivariant \(\K\)\nb-theory \(\RK^*_{\Grd,\Source}(\Midd)\) of~\(\Midd\) with \(\Source\)\nb-compact support is identified in~\cite{Emerson-Meyer:Equivariant_K} with \(\KK_*^{\Grd\ltimes\Source}\bigl(\CONT_0(\Source),\CONT_0(\Midd)\bigr)\); here we view~\(\Midd\) as a space over~\(\Source\) using~\(\mapl\).  In particular, \(\Kclass\) becomes a class in \(\KK_*^{\Grd\ltimes\Source} \bigl(\CONT_0(\Source), \CONT_0(\Midd)\bigr)\), which maps to \(\KK_*^\Grd \bigl(\CONT_0(\Source), \CONT_0(\Midd)\bigr)\) by a forgetful functor.  Since~\(\Midd\) is an open subset of~\(\total{\Triv^\Target}\), we may identify \(\CONT_0(\Midd)\) with an ideal in \(\CONT_0(\total{\Triv^\Target})\).  The \(\K\)\nb-orientation for the \(\Grd\)\nb-vector bundle~\(\Triv^\Target\) over~\(\Target\) induces a \(\KK^{\Grd\ltimes\Target}\)\nb-equivalence between \(\CONT_0(\total{\Triv^\Target})\) and \(\CONT_0(\Target)\).  Putting all ingredients together, we get a class in \(\KK^\Grd_0 \bigl(\CONT_0(\Source), \CONT_0(\Target)\bigr)\), which we denote by \(\KK(\Cor)\).

  The invertible element in \(\KK^{\Grd\ltimes\Target} \bigl(\CONT_0(\total{\Triv^\Target}), \CONT_0(\Target)\bigr)\) used above induces the Thom isomorphism for the \(\K\)\nb-oriented \(\Grd\)\nb-vector bundle~\(\Triv^\Target\).  Since the composition of two Thom isomorphisms is again a Thom isomorphism for the direct sum vector bundle, it follows that \(\KK(\Cor) = \KK(\Cor^{\Triv'})\) if~\(\Cor^{\Triv'}\) is the Thom modification of~\(\Cor\) along a \(\K\)\nb-oriented \(\Grd\)\nb-vector bundle~\(\Triv'\) over~\(\Base\).  Recall also that a special bordism of correspondences from~\(\Source\) to~\(\Target\) is nothing but a special correspondence from~\(\Source\) to~\(\Target\times[0,1]\).  Hence a bordism produces a homotopy of Kasparov cycles, so that \(\KK(\Cor)\) is invariant under special bordisms.  Thus \(\Cor\mapsto \KK(\Cor)\) is a well-defined map \(\GKK_\Grd^*(\Source,\Target) \to \KK^\Grd_* \bigl(\CONT_0(\Source), \CONT_0(\Target)\bigr)\).

  It is clear that this construction preserves gradings.  Compatibility with exterior products is easy to check as well.  We check compatibility with composition products.  Recall that a special correspondence \(\Cor = (\Triv,\Midd,\mapl,\Kclass)\) is the product of one of the form \((\mapl,\Kclass)^*\) and~\(\mapr!\) for a special normally non-singular submersion \(\mapr\colon \Midd\subseteq \total{\Triv^\Target}\epi\Target\).  By construction, \(\KK(\Cor)\) is the product of \(\KK\bigl((\mapl,\Kclass)^*\bigr)\) and \(\KK(\mapr!)\).  It is easy to see that \(\Cor\mapsto\KK(\Cor)\) is multiplicative on correspondences of the special form \((\mapl,\Kclass)^*\).  Multiplicativity for normally non-singular maps -- in particular, for special normally non-singular submersions -- follows as in \cite{Emerson-Meyer:Normal_maps}.

  It remains to check multiplicativity for products of the form \(\NM!  \inpro (\mapl,\Kclass)^*\) for a special normally non-singular submersion \(\NM= (\Triv,\Source)\) with an open subset \(\Source \subseteq \total{\Triv^\Target}\), a \(\Grd\)\nb-map \(\mapl\colon \Third\to\Target\), and \(\Kclass\in\RK^*_{\Grd,\Target}(\Third)\).  The product in \(\GKK^*_\Grd(\Source,\Third)\) is the special correspondence
  \[
  \bigl(\Triv,\Source\times_\Target\Third,p_1,p_2^*(\Kclass)\bigr),
  \]
  where \(p_1\colon \Source\times_\Target\Third\to\Source\) and \(p_2\colon \Source\times_\Target\Third\to\Third\) are the coordinate projections.  Now the assertion follows from the naturality of the Thom isomorphism with respect to \(\KK\bigl((\mapl,\Kclass)^*\bigr)\).  More explicitly, let \(\proj{\Triv^\Target}\colon \total{\Triv^\Target}\epi\Target\) be the projection.  Then the diagram
  \[
  \xymatrix{
    \CONT_0(\Source)\ar[r]^{\subseteq}
    \ar[d]_{\Id_\Source \times_\Target (\mapl,\Kclass)^*}&
    \CONT_0(\total{\Triv^\Target})
    \ar[d]^{\Id_{\total{\Triv}} \times_\Base (\mapl,\Kclass)^*}
    \ar[r]^{\proj{\Triv^\Target}!}&
    \CONT_0(\Target)
    \ar[d]^{(\mapl,\Kclass)^*}\\
    \CONT_0(\Source\times_\Target\Third) \ar[r]_{\subseteq}&
    \CONT_0(\total{\Triv^\Third}) \ar[r]_{p_\Third!}&
    \CONT_0(\Third)
  }
  \]
  commutes in \(\KK^\Grd\).  Hence \(\KK\bigl(\NM!  \inpro_\Target (\mapl,\Kclass)^*\bigr) = \KK(\NM!)  \otimes_{\CONT_0(\Target)} \KK\bigl((\mapl,\Kclass)^*\bigr)\).  This finishes the proof that \(\Cor\mapsto \KK(\Cor)\) is a functor.

  Now assume that \((\Dual,D,\Theta,\widetilde{\Theta})\) is a \(\K\)\nb-oriented symmetric dual for~\(\Source\), so that Theorem~\ref{the:duality_restricted} provides an isomorphism
  \[
  \GKK_\Grd^*(\Source,\Target)
  \cong \GKK_{\Grd\ltimes\Source}^*
  (\Source, \Dual\times_\Base\Target)
  \cong \RK_{\Grd,\Source}^*(\Dual\times_\Base\Target).
  \]
  The images of \(D\), \(\Theta\) and~\(\widetilde{\Theta}\) in Kasparov theory satisfy analogues of the conditions in Definition~\ref{def:symmetric_dual} because the transformation from \(\GKK\) to \(\KK\) is compatible with composition and exterior products.  For the last two conditions (iv) and~(v), Theorem~\ref{the:geometric_K} and its analogue in Kasparov theory show that there are the same auxiliary data~\(g\) to consider in both theories.  The same computations as in~\S\ref{sec:duality_isomorphisms} yield
  \[
  \KK^\Grd_*\bigl(\CONT_0(\Source), \CONT_0(\Target)\bigr)
  \cong \KK^{\Grd\ltimes\Source}_* \bigl( \CONT_0(\Source),
  \CONT_0(\Dual\times_\Base\Target)\bigr)
  \cong \RK_{\Grd,\Source}^*(\Dual\times_\Base\Target).
  \]
  The last isomorphism is contained in~\cite{Emerson-Meyer:Equivariant_K} (in fact, it is a definition in~\cite{Emerson-Meyer:Equivariant_K}; the results in~\cite{Emerson-Meyer:Equivariant_K} show that this definition agrees with the one used here).  Hence we get the desired isomorphism.
\end{proof}

We leave it to the reader to define \(\KK(\Cor)\) for non-special correspondences directly and to check that \(\KK(\Cor)=\KK(\Cor')\) if~\(\Cor'\) is the special correspondence associated to a correspondence~\(\Cor\).

\begin{corollary}
  \label{cor:compare_to_KK_smooth}
  The natural transformation \(\GKK^*_\Grd(\Source,\Target) \to \KK^\Grd_* \bigl(\CONT_0(\Source), \CONT_0(\Target)\bigr)\) is invertible if~\(\Source\) is normally non-singular.
\end{corollary}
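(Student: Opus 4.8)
The plan is to read this off directly from Theorem~\ref{the:compare_to_KK}, whose conclusion already gives invertibility of the natural transformation \(\GKK^*_\Grd(\Source,\Target)\to\KK^\Grd_*\bigl(\CONT_0(\Source),\CONT_0(\Target)\bigr)\) whenever~\(\Source\) admits a symmetric dual in~\(\GKK^*\). So the only thing to supply is such a symmetric dual, and here Theorem~\ref{the:duality_smooth} does the work: apply it with \(\Tot=\Source\). The one hypothesis of Theorem~\ref{the:duality_smooth} beyond ``\(\Source\) is normally non-singular'' is that every \(\Grd\)\nb-vector bundle over~\(\Base\) be contained in an \(\Coh\)\nb-oriented one; for \(\Coh=\RK\) this is automatic by Remark~\ref{rem:assumptions_automatic}, since any real \(\Grd\)\nb-vector bundle is a direct summand of the underlying real bundle of its complexification, and complex \(\Grd\)\nb-vector bundles are canonically \(\K\)\nb-oriented. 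By the same remark the auxiliary bundle~\(\tilde{\Triv}\) in the construction may be arranged, after a lifting, to be \(\K\)\nb-oriented and of locally constant dimension, so the hypotheses of Theorem~\ref{the:duality_smooth} are met.

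Granting this, Theorem~\ref{the:duality_smooth} produces data \((\Dual,D,\Theta,\widetilde{\Theta})\) satisfying the conditions of Definition~\ref{def:symmetric_dual}, hence a symmetric dual for~\(\Source\) in~\(\GKK^*=\Bic^*\) for \(\Coh=\RK\); all the normally non-singular maps and Thom classes entering \(D\), \(\Theta\) and~\(\widetilde{\Theta}\) are \(\K\)\nb-theory data, so this dual is \(\K\)\nb-oriented in the sense used in the proof of Theorem~\ref{the:compare_to_KK}. The remaining step is then simply to invoke the last sentence of Theorem~\ref{the:compare_to_KK}: the existence of a symmetric dual for~\(\Source\) forces the natural transformation to be an isomorphism for every second countable, locally compact \(\Grd\)\nb-space~\(\Target\). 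This is exactly the assertion of the corollary.

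There is no genuine obstacle here; the statement is a formal combination of the two main theorems once the vector-bundle hypothesis is discharged via Remark~\ref{rem:assumptions_automatic}. The only points that need a moment's bookkeeping are that the standing hypotheses of Theorem~\ref{the:compare_to_KK} (\(\Grd\) proper, second countable, locally compact with Haar system; \(\Source\), \(\Target\) second countable and locally compact) are inherited in the present situation, and that the dual space~\(\Dual\), being an open subset of the total space of a \(\Grd\)\nb-vector bundle over~\(\Base\), is again a second countable, locally compact, proper \(\Grd\)\nb-space, so that the \(\K\)\nb-theoretic identifications \(\GKK_{\Grd\ltimes\Source}^*(\Source,\Dual\times_\Base\Target)\cong\RK^*_{\Grd,\Source}(\Dual\times_\Base\Target)\) and their Kasparov analogues used in the proof of Theorem~\ref{the:compare_to_KK} apply verbatim. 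None of this requires new ideas.
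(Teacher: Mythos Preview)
Your proof is correct and follows exactly the paper's own approach: the paper's proof is the single line ``Combine Theorems~\ref{the:duality_smooth} and~\ref{the:compare_to_KK}.'' Your additional bookkeeping (discharging the \(\Coh\)\nb-orientation hypothesis via Remark~\ref{rem:assumptions_automatic} and checking that~\(\Dual\) inherits the right point-set properties) is accurate and harmless elaboration of what the paper leaves implicit.
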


\begin{proof}
  Combine Theorems \ref{the:duality_smooth} and~\ref{the:compare_to_KK}.
\end{proof}

\begin{remark}
  \label{rem:dual_to_dual}
  It is not difficult to check that in fact a symmetric dual in~\(\GKK_\Grd\) arising from a fibrewise stable smooth structure on~\(\Source\) maps to a Kasparov dual in~\(\KK^\Grd\).  This follows from an examination of the proof of \cite{Emerson-Meyer:Dualities}*{Theorem 7.11}, which carries through with no changes from the smooth case.
\end{remark}

\subsection{Smooth correspondences}
\label{sec:smooth_corr}

A \emph{smooth \(\Grd\)-manifold} is a \(\Grd\)\nb-space with a fibrewise smooth structure along the fibres of the anchor map \(\Tot\to \Base\) determined by an atlas for~\(\Tot\) consisting of open sets isomorphic to products \(U\times \R^n\) where \(U\subset \Base\) is an open set.  We also require that the isomorphism intertwines the anchor map and the first coordinate projection, and that the change-of-variables maps are smooth in the vertical direction.

\begin{theorem}[\cite{Emerson-Meyer:Normal_maps}]
  \label{the:normal_map_unique}
  Let \(\Source\) and~\(\Target\) be smooth \(\Grd\)\nb-manifolds, assume that there is a smooth normally non-singular map from~\(\Source\) to the object space~\(\Base\) of~\(\Grd\) and that either~\(\Tvert\Target\) is subtrivial or that all \(\Grd\)\nb-vector bundles on~\(\Source\) are subtrivial.  Then any smooth \(\Grd\)\nb-map from~\(\Source\) to~\(\Target\) is the trace of a smooth normally non-singular \(\Grd\)\nb-map, and two smooth normally non-singular maps from~\(\Source\) to~\(\Target\) are smoothly equivalent if and only if their traces are smoothly homotopic.
\end{theorem}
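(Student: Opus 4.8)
The plan is to recall the argument of \cite{Emerson-Meyer:Normal_maps}, reducing both assertions to the equivariant Tubular Neighbourhood Theorem for smooth \(\Grd\)\nb-manifolds (\cite{Emerson-Meyer:Normal_maps}*{Theorem 3.18}) together with uniqueness of smooth tubular neighbourhoods up to isotopy. First, for the existence statement, let \(f\colon \Source\to\Target\) be a smooth \(\Grd\)\nb-map and let \((\VB_0,\Triv_0,\hat g)\) be a smooth normally non-singular \(\Grd\)\nb-map from~\(\Source\) to~\(\Base\), so that \(\hat g\colon \total{\VB_0}\opem\total{\Triv_0}\) is a fibrewise diffeomorphism onto an open subset and~\(\VB_0\) is subtrivial. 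I would form the smooth \(\Grd\)\nb-equivariant map
\[
\psi\defeq (\hat g\circ\zers{\VB_0},\,f)\colon
\Source \to \total{\Triv_0}\times_\Base\Target = \total{\Triv_0^\Target}.
\]
This is a closed smooth embedding: it is injective because \(\hat g\circ\zers{\VB_0}\) is, it is an immersion, and it is proper onto a closed subset because \(\hat g\circ\zers{\VB_0}\) is the trace of the given normally non-singular map \(\Source\to\Base\) and hence proper. By \cite{Emerson-Meyer:Normal_maps}*{Theorem 3.18}, \(\psi\) admits a \(\Grd\)\nb-equivariant smooth tubular neighbourhood, that is, a fibrewise diffeomorphism~\(\hat f\) from the total space of the normal bundle~\(\VB\) of~\(\psi\) onto an open subset of~\(\total{\Triv_0^\Target}\) extending~\(\psi\) on the zero section.

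The point where the hypotheses enter is the subtriviality of~\(\VB\). Using the product structure of \(\total{\Triv_0}\times_\Base\Target\) one gets a natural isomorphism \(\VB\cong f^*(\Tvert\Target)\oplus\VB_0\); hence if all \(\Grd\)\nb-vector bundles on~\(\Source\) are subtrivial, \(\VB\) is subtrivial outright, while if \(\Tvert\Target\) is subtrivial then \(f^*(\Tvert\Target)\) and the already subtrivial~\(\VB_0\) are subtrivial and so is their direct sum. In either case \((\VB,\Triv_0,\hat f)\) is a smooth normally non-singular \(\Grd\)\nb-map, and by construction its trace \(\proj{\Triv_0^\Target}\circ\hat f\circ\zers{\VB}\) equals~\(f\).

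For the equivalence statement, necessity is immediate: smooth equivalence of normally non-singular maps is generated by smooth isotopy and lifting along trivial \(\Grd\)\nb-vector bundles over~\(\Base\), and neither changes the smooth homotopy class of the trace. For sufficiency, let \(f_0=(\VB_0',\Triv_0',\hat f_0)\) and \(f_1=(\VB_1',\Triv_1',\hat f_1)\) have traces joined by a smooth \(\Grd\)\nb-homotopy \(F\colon \Source\times[0,1]\to\Target\); after lifting along suitable trivial \(\Grd\)\nb-vector bundles I may assume \(\Triv_0'=\Triv_1'\). Now \(\Source\times[0,1]\) is a smooth \(\Grd\)\nb-manifold with boundary that embeds as a neat \(\Grd\)\nb-submanifold of \(\total{\Triv_0}\times_\Base\Target\times[0,1]\) via \((\source,t)\mapsto(\hat g\zers{\VB_0}(\source),F(\source,t),t)\), so the existence argument — now in the version for manifolds with boundary, using a collar and the Tubular Neighbourhood Theorem for manifolds with boundary exactly as in Example~\ref{exa:smooth_bordism} — lifts~\(F\) to a smooth normally non-singular \(\Grd\)\nb-map from \(\Source\times[0,1]\) to~\(\Target\) whose restrictions to the two ends have traces equal to the traces of \(f_0\) and~\(f_1\). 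Since a tubular neighbourhood of a fixed submanifold is unique up to isotopy, these two end restrictions are smoothly equivalent, after a further common lifting, to \(f_0\) and~\(f_1\); and the normal structure over \(\Source\times[0,1]\) provides the desired smooth isotopy between the resulting open embeddings. Hence \(f_0\) and~\(f_1\) are smoothly equivalent.

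The main obstacle I expect is precisely the equivariant Tubular Neighbourhood Theorem and its family version over \([0,1]\): one must know that the normal bundle of~\(\psi\) exists as a genuine subtrivial \(\Grd\)\nb-vector bundle and carries a fibrewise-open \(\Grd\)\nb-equivariant tubular neighbourhood, and one must keep track of neatness and properness for the bordism-type embedding above so that the whole construction stays inside the class of (subtrivial) normally non-singular maps. All of this, including the relevant bookkeeping of subtriviality through liftings, is carried out in \cite{Emerson-Meyer:Normal_maps}; the two alternative hypotheses on \(\Tvert\Target\) or on the \(\Grd\)\nb-vector bundles on~\(\Source\) are exactly what guarantees that the normal bundle~\(\VB\) one produces is subtrivial.
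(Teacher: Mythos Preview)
The paper does not prove this theorem; it is stated with attribution to \cite{Emerson-Meyer:Normal_maps} and no argument is supplied here. Your sketch is in line with the approach of that reference: lift a smooth map to a normally non-singular one by embedding~\(\Source\) into \(\total{\Triv_0^\Target}\) via the given normally non-singular map to~\(\Base\) together with~\(f\), invoke the equivariant Tubular Neighbourhood Theorem (\cite{Emerson-Meyer:Normal_maps}*{Theorem~3.18}), and identify the normal bundle as \(f^*(\Tvert\Target)\oplus\VB_0\) so that the two alternative hypotheses guarantee subtriviality; then handle equivalence by the same construction over \([0,1]\) combined with isotopy uniqueness of tubular neighbourhoods. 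There is nothing to compare against in the present paper, and your outline is a faithful reconstruction of the cited argument.
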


\begin{remark}
  Theorem~\ref{the:normal_map_unique} fails for non-smooth normally non-singular maps: for a smooth manifold~\(\Source\), there may be several normally non-singular maps \(\Source\to\Source\times\Source\) whose trace is the diagonal embedding.
\end{remark}

Taking into account orientations, one can check that smooth equivalence classes of \(\Coh\)\nb-oriented smooth normally non-singular \(\Grd\)\nb-maps from~\(\Source\) to~\(\Target\) correspond bijectively to pairs \((f,\tau)\) where~\(f\) is the smooth homotopy class of a smooth \(\Grd\)\nb-map from~\(\Source\) to~\(\Target\) and~\(\tau\) is a stable \(\Coh\)\nb-orientation on \([\Tvert\Source] - f^*[\Tvert \Target]\).

\begin{definition}
  \label{def:smooth_correspondence}
  Let \(\Source\) and~\(\Target\) be smooth \(\Grd\)\nb-manifolds.  A \emph{smooth correspondence} from~\(\Source\) to~\(\Target\) is a correspondence \((\Midd,\mapl,\mapr,\Kclass)\) from~\(\Source\) to~\(\Target\) where~\(\Midd\) is a smooth \(\Grd\)\nb-manifold, \(\mapl\) is a fibrewise smooth \(\Grd\)\nb-map, and~\(\mapr\) is a smooth \(\Coh\)\nb-oriented normally non-singular \(\Grd\)\nb-map.  \emph{Smooth bordisms}, \emph{special smooth correspondences}, and \emph{special smooth bordisms} are defined similarly.  We let \emph{smooth equivalence} be the equivalence relation on smooth correspondences generated by smooth equivalence of normally non-singular maps, smooth bordism, and Thom modification by smooth \(\Grd\)\nb-vector bundles.
\end{definition}

The same arguments as in the non-smooth case show that every smooth correspondence is equivalent to a special smooth correspondence and that two special smooth correspondences are smoothly equivalent if and only if they have Thom modifications by trivial \(\Grd\)\nb-vector bundles that are related by a special smooth bordism (see Theorem~\ref{the:special_bordism}).

Let \((\Triv,\Midd,\mapl,\Kclass)\) be a special correspondence from~\(\Source\) to~\(\Target\).  Any \(\Grd\)\nb-vector bundle over~\(\Base\) carries a unique smooth structure for which it is a smooth \(\Grd\)\nb-vector bundle, and this restricts to a unique smooth \(\Grd\)\nb-manifold structure on~\(\Midd\).  Hence a special smooth correspondence does not carry additional structure, it merely has the additional property that the \(\Grd\)\nb-map \(\mapl\colon \Midd\to\Source\) is fibrewise smooth.  The same applies to smooth bordisms.

Since intersection products and exterior products of special smooth correspondences are again special smooth correspondences, the smooth correspondences form a symmetric monoidal category as well.  Theorem~\ref{the:geometric_K} and the duality results in \S\ref{sec:duality_isomorphisms} work for the smooth version of~\(\Bic^*_\Grd\) as well.

\begin{theorem}
  \label{the:smooth_duality}
  Let~\(\Tot\) be a smooth normally non-singular \(\Grd\)\nb-manifold.   Then~\(\Tot\) has a smooth symmetric dual. Furthermore, the smooth and non-smooth versions of \(\Bic^*(\Source,\Target)\) agree in this case, for any smooth \(\Grd\)\nb-manifold~\(\Target\).
\end{theorem}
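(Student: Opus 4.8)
The plan is to establish the two assertions separately: the first by carrying out the construction behind Theorem~\ref{the:duality_smooth} inside the smooth category, and the second by a formal duality argument resting on the smooth analogues of Theorems~\ref{the:geometric_K} and~\ref{the:duality_restricted} that the discussion preceding this theorem has already asserted to hold.

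For the first assertion I would start from a smooth normally non-singular \(\Grd\)\nb-map \(\NM=(\VB,\tilde\Triv,\hat f)\) from~\(\Tot\) to \(\Base\times[0,\infty)\), which exists since~\(\Tot\) is a smooth normally non-singular \(\Grd\)\nb-manifold; after a lifting, performed as in Remark~\ref{rem:assumptions_automatic}, I may assume that~\(\tilde\Triv\) is \(\Coh\)\nb-oriented with constant fibre dimension, so that~\(\hat f\) is a fibrewise diffeomorphism onto an open subset. Then I would repeat the construction preceding Theorem~\ref{the:duality_smooth} step by step, observing that everything stays in the smooth category: \(\Triv=\tilde\Triv\oplus\R\), and \(\Dual=\bigl(\bd\VB\times(-\infty,0]\bigr)\cup\total\VB\) is now an open sub-\(\Grd\)\nb-manifold of the smooth \(\Grd\)\nb-manifold~\(\total\Triv\), so that the special normally non-singular submersion \(\Dual\opem\total\Triv\epi\Base\) is smooth and yields a smooth class~\(D\). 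The retraction~\(h\), a \(\Grd\)\nb-invariant smooth inner product on~\(\Triv\), the \(\Grd\)\nb-invariant smooth cutoff function~\(\varrho\) of Lemma~\ref{lem:sections_normal} (obtained from a smooth equivariant partition of unity, which is available because~\(\Grd\) is numerably proper and all spaces involved are smooth \(\Grd\)\nb-manifolds), the embedding~\(\iota\), and the map~\(\pi'\) can all be taken smooth, using the smooth equivariant Tubular Neighbourhood Theorem \cite{Emerson-Meyer:Normal_maps}*{Theorem 3.18} to extend~\(\iota\). Hence \(\Theta=(\Triv^\Tot,\iota)!\) and \(\widetilde\Theta=(0,\total{\Triv^\Tot},\pi',\Thom_\Triv)\) are smooth special correspondences. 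Finally I would run through Conditions \ref{sym_dual_i}--\ref{sym_dual_v}: in the proof of Theorem~\ref{the:duality_smooth} each of them is checked by exhibiting an explicit isotopy of open embeddings or a special bordism whose constituent maps are the ones just listed together with their affine and linear deformations (the matrices~\(A_t\), the flip, the map~\(\Psi\), the homotopy~\(\bar\pi\)), and all of these are manifestly smooth. This produces a smooth symmetric dual \((\Dual,D,\Theta,\widetilde\Theta)\) for~\(\Tot\), which by forgetting smooth structures is also a symmetric dual in the continuous sense.

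For the second assertion I would consider the forgetful functor from the smooth to the continuous version of~\(\Bic^*_\Grd\) and show it is bijective on morphisms \(\Tot\to\Target\). Because this functor preserves composition and exterior products and sends the smooth~\(D\) to the continuous~\(D\), it intertwines the smooth and continuous versions of the duality map~\(\SPD^*\) of~\eqref{eq:second_duality} for the dual constructed above, taken with \(\Third=\Tot\) and coefficient space~\(\Target\). Both of these are isomorphisms: the continuous one by Theorem~\ref{the:duality_restricted}, using that~\(\Tot\) has a symmetric dual and that all \(\Grd\)\nb-vector bundles over~\(\Tot\) are subtrivial, and the smooth one by the smooth analogue of Theorem~\ref{the:duality_restricted} applied to the smooth symmetric dual. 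On the source side, the comparison map between the smooth and continuous versions of \(\Bic^i_{\Grd\ltimes\Tot}(\Tot,\Dual\times_\Base\Target)\) is itself an isomorphism, since both groups are identified with \(\Coh^i_{\Grd,\Tot}(\Dual\times_\Base\Target)\) by~\eqref{eq:Bic_contains_Coh} and its smooth analogue, and the simplification argument behind these isomorphisms, applied to a smooth correspondence, produces the same canonical representative \((0,\Dual\times_\Base\Target,\anchor,\eta)\), which is automatically a smooth correspondence. A commuting square with these three isomorphisms then forces the comparison map from the smooth to the continuous version of \(\Bic^{i-n}_\Grd(\Tot,\Target)\) to be an isomorphism, in every degree and for every smooth \(\Grd\)\nb-manifold~\(\Target\).

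The step I expect to be the main obstacle is the bookkeeping in the first assertion: one must verify that every inner product, cutoff function, map, isotopy, and bordism appearing in the fairly involved proof of Theorem~\ref{the:duality_smooth} can be chosen smooth and \(\Grd\)\nb-invariant at the same time. This comes down to having smooth, equivariant forms of the basic tools — invariant inner products, partitions of unity, and tubular neighbourhoods — for smooth \(\Grd\)\nb-manifolds over a numerably proper groupoid, all of which are provided by \cite{Emerson-Meyer:Normal_maps}; but tracking them consistently through the whole argument is what requires genuine care. Granting that, the duality argument giving the second assertion is purely formal.
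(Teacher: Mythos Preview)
Your proposal is correct and follows essentially the same approach as the paper's own proof, which is a two-sentence sketch: the constructions of \S\ref{sec:duality_smooth} become smooth when fed a smooth normally non-singular map and the verification of Theorem~\ref{the:duality_smooth} goes through in the smooth category, and then the two versions of~\(\Bic^*\) are identified by the same duality-reduction argument used in Theorem~\ref{the:compare_to_KK}. You have simply unpacked both steps in detail, including the commuting-square argument via~\eqref{eq:Bic_contains_Coh} and the smooth/continuous \(\SPD^*\), which is exactly what ``as in the proof of Theorem~\ref{the:compare_to_KK}'' amounts to here.
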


\begin{proof}
  The constructions in~\S\ref{sec:duality_smooth} produce smooth correspondences if we plug in a smooth normally non-singular map from~\(\Tot\) to~\(\Base\), and the proof of Theorem~\ref{the:duality_smooth} still works in the smooth version of~\(\Bic^*\).  This duality isomorphism allows to identify the two versions of~\(\Bic^*\) as in the proof of Theorem~\ref{the:compare_to_KK}.
\end{proof}

\begin{remark}
  \label{rem:drop_normal_if_smooth}
  If there is a smooth normally non-singular map from~\(\Midd\) to~\(\Base\), then as we have stated above, smooth equivalence classes of smooth normally non-singular maps \(\Midd\to\Target\) correspond bijectively to smooth homotopy classes of smooth maps from~\(\Midd\) to~\(\Target\), and since smooth homotopy is a special case of smooth bordism, we we may drop ``smooth equivalence of normally non-singular maps'' from the definition of smooth equivalence in Definition~\ref{def:smooth_correspondence}.

  The problem with this observation is that we have little control about~\(\Midd\).  It seems that we can only apply it if \emph{all} smooth \(\Grd\)\nb-manifolds~\(\Midd\) admit a smooth normally non-singular \(\Grd\)\nb-map to~\(\Base\).  This holds, for instance, if \(\Grd= G\ltimes\Base\) for a discrete group~\(G\) and a finite-dimensional \(G\)\nb-CW-complex~\(\Base\) with uniformly bounded isotropy groups.  If~\(G\) is, say, a compact group, then we must restrict attention to smooth correspondences \((\Midd,\mapl,\mapr,\Kclass)\) where~\(\Midd\) is a smooth \(G\)\nb-manifold of finite orbit type.

  Summing up, the difference between smooth maps and smooth normally non-singular maps is \emph{usually} insignificant -- but only under some mild technical assumption.  Our theory depends on normally non-singular maps because only this allows to replace a general correspondence by a special one, but it does not depend on smoothness.  This is why we developed our main theory without smoothness assumptions in the main part of this article.
\end{remark}

\section{Outlook and concluding remarks}
\label{sec:outlook}

We have extended an equivariant cohomology theory to a bivariant theory.  In particular, this provides a purely topological counterpart of equivariant Kasparov theory for proper groupoids.  We have used duality isomorphisms to identify the topological and analytic bivariant \(\K\)\nb-theories, and established such duality isomorphisms for smooth \(\Grd\)\nb-manifolds with boundary (under some technical assumptions about equivariant vector bundles).

It is known that any finite-dimensional CW-complex is homotopy equivalent to a smooth manifold with boundary and hence admits a symmetric dual.  We do not know whether a similar result holds equivariantly, say, for simplicial complexes with an action of a finite group.  Anyway, it is desirable for computations to construct symmetric duals for simplicial complexes or even CW-complexes in~\(\Bic^*_\Grd\) directly, without modelling them by smooth manifolds with boundary.  In bivariant Kasparov theory, such a symmetric dual for a simplicial complex is constructed in~\cite{Emerson-Meyer:Euler}, but it involves mildly non-commutative \(\Cst\)\nb-algebras.  It is an open problem to replace this by a purely commutative construction.

An issue that we have neglected here is excision.  Since Kasparov theory satisfies very strong excision results for proper actions, our topological theory will satisfy excision whenever it agrees with Kasparov theory.  But we should not expect good excision results in complete generality: this is one of the points where a lack of enough \(\Grd\)\nb-vector bundles over~\(\Base\) should cause problems.

The first duality isomorphism may be used to define equivariant Euler characteristics and equivariant Lefschetz invariants (see \cites{Emerson-Meyer:Euler, Emerson-Meyer:Dualities}).  Since we have translated it to a purely topological bivariant \(\K\)\nb-theory, we can now compute these invariants geometrically.  We will carry this out for several examples in a forthcoming article.

Although we are mainly interested in \(\K\)\nb-theory and \(\KO\)\nb-theory here, we have allowed more general equivariant cohomology theories in all our constructions.  We expect this to have several applications.

First, since our construction of bivariant theories is functorial with respect to natural transformations of cohomology theories, it should be useful to construct bivariant Chern characters from bivariant equivariant \(\K\)\nb-theory to suitable bivariant Bredon cohomology groups (at least for discrete groups).

Secondly, we hope to define bivariant versions of twisted \(\K\)\nb-theory within our framework.  One approach to this views twisted \(\K\)\nb-theory as a \(\textup{PU}(\Hils)\)-equivariant cohomology theory, where \(\textup{PU}(\Hils)\) denotes the projective unitary group of a separable Hilbert space~\(\Hils\).  A space with a twist datum can be described as a principal \(\textup{PU}(\Hils)\)-bundle.  But we have not yet checked the assumptions we need for our theory to work in this case.

\begin{bibdiv}
  \begin{biblist}
\bib{Abels:Universal}{article}{
  author={Abels, Herbert},
  title={A universal proper \(G\)\nobreakdash -space},
  journal={Math. Z.},
  volume={159},
  date={1978},
  number={2},
  pages={143--158},
  review={\MRref {0501039}{58\,\#18504}},
}

\bib{Baum-Block:Bicycles}{article}{
  author={Baum, Paul},
  author={Block, Jonathan},
  title={Equivariant bicycles on singular spaces},
  language={English, with French summary},
  journal={C. R. Acad. Sci. Paris S\'er. I Math.},
  volume={311},
  date={1990},
  number={2},
  pages={115--120},
  issn={0764-4442},
  review={\MRref {1065441}{92b:19003}},
}

\bib{Baum-Block:Excess}{article}{
  author={Baum, Paul},
  author={Block, Jonathan},
  title={Excess intersection in equivariant bivariant \(K\)\nobreakdash -theory},
  language={English, with French summary},
  journal={C. R. Acad. Sci. Paris S\'er. I Math.},
  volume={314},
  date={1992},
  number={5},
  pages={387--392},
  issn={0764-4442},
  review={\MRref {1153721}{93b:19005}},
}

\bib{Baum-Douglas:K-homology}{article}{
  author={Baum, Paul},
  author={Douglas, Ronald G.},
  title={\(K\)-Homology and index theory},
  conference={ title={Operator algebras and applications, Part I}, address={Kingston, Ont.}, date={1980}, },
  book={ series={Proc. Sympos. Pure Math.}, volume={38}, publisher={Amer. Math. Soc.}, place={Providence, R.I.}, },
  date={1982},
  pages={117--173},
  review={\MRref {679698}{84d:58075}},
}

\bib{Baum-Higson-Schick:Equivalence}{article}{
  author={Baum, Paul},
  author={Higson, Nigel},
  author={Schick, Thomas},
  title={On the equivalence of geometric and analytic $K$\nobreakdash -homology},
  journal={Pure Appl. Math. Q.},
  volume={3},
  date={2007},
  number={1},
  pages={1--24},
  issn={1558-8599},
  review={\MRref {2330153}{2008d:58015}},
}

\bib{Connes-Skandalis:Indice_feuilletages}{article}{
  author={Connes, Alain},
  author={Skandalis, Georges},
  title={Th\'eor\`eme de l'indice pour les feuilletages},
  language={French, with English summary},
  journal={C. R. Acad. Sci. Paris S\'er. I Math.},
  volume={292},
  date={1981},
  number={18},
  pages={871--876},
  issn={0151-0509},
  review={\MRref {623519}{82f:58079}},
}

\bib{Connes-Skandalis:Longitudinal}{article}{
  author={Connes, Alain},
  author={Skandalis, Georges},
  title={The longitudinal index theorem for foliations},
  journal={Publ. Res. Inst. Math. Sci.},
  volume={20},
  date={1984},
  number={6},
  pages={1139--1183},
  issn={0034-5318},
  review={\MRref {775126}{87h:58209}},
}

\bib{Emerson-Meyer:Euler}{article}{
  author={Emerson, Heath},
  author={Meyer, Ralf},
  title={Euler characteristics and Gysin sequences for group actions on boundaries},
  journal={Math. Ann.},
  volume={334},
  date={2006},
  number={4},
  pages={853--904},
  issn={0025-5831},
  review={\MRref {2209260}{2007b:19006}},
}

\bib{Emerson-Meyer:Equivariant_K}{article}{
  title={Equivariant representable K-theory},
  journal={J. Topol.},
  volume={2},
  date={2009},
  number={1},
  pages={123--156},
  issn={1753-8416},
  review={\MRref {2499440}{}},
}

\bib{Emerson-Meyer:Dualities}{article}{
  author={Emerson, Heath},
  author={Meyer, Ralf},
  title={Dualities in equivariant Kasparov theory},
  date={2009},
  note={\arxiv {0711.0025v2}},
  status={eprint},
}

\bib{Emerson-Meyer:Equi_Lefschetz}{article}{
  author={Emerson, Heath},
  author={Meyer, Ralf},
  title={Equivariant Lefschetz maps for simplicial complexes and smooth manifolds},
  status={to appear},
  date={2009},
  doi={\doi {10.1007/s00208-009-0367-z}},
  journal={Math. Ann.},
}

\bib{Emerson-Meyer:Normal_maps}{article}{
  author={Emerson, Heath},
  author={Meyer, Ralf},
  title={Equivariant embedding theorems and topological index maps},
  date={2009},
  note={\arxiv {0908.1465}},
  status={eprint},
}

\bib{Hirsch:Diff_Top}{book}{
  author={Hirsch, Morris W.},
  title={Differential topology},
  series={Graduate Texts in Mathematics},
  volume={33},
  publisher={Springer-Verlag},
  place={New York},
  date={1976},
  pages={x+221},
  review={\MRref {0448362}{56\,\#6669}},
}

\bib{Jakob:Bordism_homology}{article}{
  author={Jakob, Martin},
  title={A bordism-type description of homology},
  journal={Manuscripta Math.},
  volume={96},
  date={1998},
  number={1},
  pages={67--80},
  issn={0025-2611},
  review={\MRref {1624352}{99e:55006}},
}

\bib{Jakob:Bivariant}{article}{
  author={Jakob, Martin},
  title={Bivariant theories for smooth manifolds},
  note={Papers in honour of the seventieth birthday of Professor Heinrich Kleisli (Fribourg, 2000)},
  journal={Appl. Categ. Structures},
  volume={10},
  date={2002},
  number={3},
  pages={279--290},
  issn={0927-2852},
  review={\MRref {1916160}{2003e:55008}},
}

\bib{Kasparov:Novikov}{article}{
  author={Kasparov, Gennadi G.},
  title={Equivariant \(KK\)-theory and the Novikov conjecture},
  journal={Invent. Math.},
  volume={91},
  date={1988},
  number={1},
  pages={147--201},
  issn={0020-9910},
  review={\MRref {918241}{88j:58123}},
}

\bib{LeGall:KK_groupoid}{article}{
  author={Le Gall, Pierre-Yves},
  title={Th\'eorie de Kasparov \'equivariante et groupo\"\i des. I},
  language={French, with English and French summaries},
  journal={\(K\)\nobreakdash -Theory},
  volume={16},
  date={1999},
  number={4},
  pages={361--390},
  issn={0920-3036},
  review={\MRref {1686846}{2000f:19006}},
}

\bib{Meyer-Nest:BC}{article}{
  author={Meyer, Ralf},
  author={Nest, Ryszard},
  title={The Baum--Connes conjecture via localisation of categories},
  journal={Topology},
  volume={45},
  date={2006},
  number={2},
  pages={209--259},
  issn={0040-9383},
  review={\MRref {2193334}{2006k:19013}},
}

\bib{Raven:Thesis}{thesis}{
  author={Raven, Jeff},
  title={An equivariant bivariant Chern character},
  institution={Pennsylvania State University},
  type={phdthesis},
  date={2004},
  note={electronically available at \href {http://etda.libraries.psu.edu/theses/approved/WorldWideFiles/ETD-723/dissertation.pdf}{the Pennsylvania Digital Library}},
}

\bib{Saavedra:Tannakiennes}{book}{
  author={Saavedra Rivano, Neantro},
  title={Cat\'egories Tannakiennes},
  language={French},
  series={Lecture Notes in Mathematics},
  volume={265},
  publisher={Springer-Verlag},
  place={Berlin},
  date={1972},
  pages={ii+418},
  review={\MRref {0338002}{49\,\#2769}},
}

\bib{Segal:Fredholm_complexes}{article}{
  author={Segal, Graeme B.},
  title={Fredholm complexes},
  journal={Quart. J. Math. Oxford Ser. (2)},
  volume={21},
  date={1970},
  pages={385--402},
  issn={0033-5606},
  review={\MRref {0271930}{42\,\#6811}},
}
  \end{biblist}
\end{bibdiv}
\end{document}